\title{A review of Lacini's classification of rank one log del Pezzo surfaces in characteristic different from two and three}
\author{Masaru Nagaoka}
\email{masaru.nagaoka@gakushuin.ac.jp}
\address{Gakushuin University, 1-5-1 Mejiro, Toshima-ku, Tokyo 171-8588, Japan}
\def\phi{\varphi}
\def\epsilon{\varepsilon}
\def\tilde{\widetilde}
\def\Bs{\operatorname{Bs}}
\def\Dyn{\operatorname{Dyn}}
\def\Sing{\operatorname{Sing}}
\def\Char{\operatorname{Char}}
\def\Gap{\operatorname{Gap}}
\def\mult{\operatorname{mult}}
\newcommand{\Q}{\mathbb{Q}} 
\newcommand{\C}{\mathbb{C}} 
\newcommand{\Z}{\mathbb{Z}}
\newcommand{\PP}{\mathbb{P}}
\newcommand{\FF}{\mathbb{F}}
\newcommand{\ZZ}{\mathbb{Z}}
\newcommand{\wt}{\widetilde}
\theoremstyle{plain}
\newtheorem{thm}{Theorem}[section] 
\newtheorem{prop}[thm]{Proposition}
\newtheorem{lem}[thm]{Lemma}
\theoremstyle{definition} 
\newtheorem{defn}[thm]{Definition}
\newtheorem{eg}[thm]{Example} 
\theoremstyle{remark}
\newtheorem{rem}[thm]{Remark}
\newtheorem{defn and notation}[thm]{Definition and Notation}
\newtheorem{nota}[thm]{Notation}
\keywords{Del Pezzo surfaces; Positive characteristic.}
\subjclass[2020]{Primary 14J26, 14G17; Secondary 14J45}
\begin{document}
\tolerance = 9999

\begin{abstract}
   This paper reviews Lacini's classification \cite{Lac} of log del Pezzo surfaces of rank one in characteristics different from two and three, with a focus on where and how Lacini enhanced the techniques of Keel and M\textsuperscript{c}Kernan \cite{KM}.
   We point out that there is at most one log del Pezzo surface that may have been erroneously omitted from the list in \cite[\S 6.1]{Lac}, namely the surface in Example \ref{eg:cexLac4.19}.
   We also extend the results of \cite[\S 4.2]{Lac} to arbitrary characteristic.
\end{abstract}

\maketitle
\markboth{Masaru Nagaoka}{Notebook}
\tableofcontents

\section{Introduction}
Throughout this paper, we work over an algebraically closed field $k$ of characteristic $\Char(k) \geq 0$.
In \cite{Lac}, Lacini gives a classification of log del Pezzo surfaces of rank one when $\Char(k) \neq 2,3$.
The technique is based on the work of Keel and M\textsuperscript{c}Kernan \cite{KM}.
To make their technique applicable in positive characteristic, Lacini overcomes difficulties such as those involving topological arguments and the absence of the Bogomolov bound \cite[\S 9]{KM}.

The classification is carried out based on the existence of tigers in the minimal resolution (see Definition \ref{def:tiger}). 
The case without tigers is handled in \cite[\S 4]{Lac}. 
On the other hand, in the case with tigers, Lacini applies \cite[Theorem 6.2]{Lac} to reduce the classification to that of certain almost log canonical pairs, which is carried out in \cite[\S 5]{Lac}.
In fact, the classification of the case with tigers can be simplified by employing the following modified version of \cite[Theorem 6.2]{Lac}.

\begin{thm}[{\cite[Theorem 5.3]{Nag25}}]\label{Lac6.2}
Let $S$ be a rank one log del Pezzo surface with tigers in its minimal resolution $\wt S$.
\begin{enumerate}
    \item[\textup{(1)}] Suppose that $S$ has an exceptional tiger $E$ in $\wt S$.
    Take $f$ and $\pi$ as in Lemma \ref{lem:Sarkisov}.
    Let $C$ be the support of $\Delta_1$ if $\pi$ is birational.
    Then, by replacing $E$ if necessary, one of the following holds.
    \begin{enumerate}
        \item[\textup{(a)}] $T$ is a net, $E$ is a section or a bisection, and $K_T+E$ is plt.
        \item[\textup{(b)}] $\pi$ is birational, $S_1$ is a Du Val del Pezzo surface of rank one, and $C$ is an irreducible, reduced, and singular anti-canonical member of $S_1$ contained in $S_1^\circ$.
        The image of the $\pi$-exceptional curve in $S_1$ is the singularity of $C$.
        \item[\textup{(c)}] $\pi$ is birational and $S_1$ is a log del Pezzo surface of rank one. 
        $K_{S_1}+C$ is plt and anti-nef.
        The image of the $\pi$-exceptional curve in $S_1$ is contained in $C$.
    \end{enumerate}
    \item[\textup{(2)}] If $S$ has tigers in $\wt S$ but none of which is exceptional, then $S$ 
    contains a reduced and irreducible curve $C$ such that $K_S+C$ is plt and anti-nef. 
\end{enumerate}
\end{thm}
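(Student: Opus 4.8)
The plan is to convert the exceptional tiger $E$ into a non-klt boundary, feed it to the two-ray game of Lemma \ref{lem:Sarkisov}, and read the trichotomy off the output contraction $\pi$. By Definition \ref{def:tiger} the tiger $E$ comes with an effective boundary $D \ge E$ with $D \equiv -K_{\wt S}$ for which $(\wt S, D)$ is log canonical but not klt along $E$, so $K_{\wt S} + D \equiv 0$. Because $\rho(S) = 1$, the relative Picard number of the minimal resolution $\mu \colon \wt S \to S$ is exactly the number of $\mu$-exceptional curves, and the link $f \colon \wt S \dashrightarrow T$ supplied by Lemma \ref{lem:Sarkisov} is correspondingly short; its terminal contraction $\pi$ is either of fibre type (the net case) or birational onto a surface $S_1$ of Picard number one. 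Throughout, $K_T + D_T \equiv 0$ is preserved, where $D_T = f_* D$, and this numerical triviality is what feeds both the plt and the anti-nef conclusions.

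Suppose first that $\pi$ is of fibre type, so that its total space $T$ is a net over a curve $B$, and aim at (a). Let $F$ be a general fibre, so $-K_T \cdot F = 2$. The transform of $E$ cannot lie in a fibre, since a fibre is $\pi$-trivial while $E$ must carry the anti-canonical positivity of the tiger; hence its image is horizontal, and intersecting against $F$ forces $E$ to be a section or a bisection. For plt-ness of $K_T + E$ I would use adjunction along $E \cong \PP^1$: since $(\wt S, D)$ was log canonical with $E$ its only relevant centre, no strictly log canonical point survives on $E$ after the link, and the different is controlled well enough to give plt. The freedom to \emph{replace $E$} is invoked exactly here, to discard a first choice whose horizontal image is non-normal or reducible in favour of a component realising the plt conclusion.

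Now suppose $\pi$ is birational onto $S_1$, and set $C = \Supp \Delta_1$ with $\Delta_1 = \pi_* D_T$. The negativity lemma applied to $\pi$ shows that the $\pi$-exceptional curve maps into $C$, which gives the stated location of the singularity. To see that $K_{S_1} + C$ is plt I would invoke inversion of adjunction along $C$: the log canonical, non-klt pair descends, and klt-ness away from the single image point upgrades to plt. Anti-nef-ness then reduces, by $\rho(S_1) = 1$, to a single sign: from $K_{S_1} + \Delta_1 \equiv 0$ and $C \le \Delta_1$ one gets $-(K_{S_1} + C)$ nef after testing against the unique ample generator. The split between (b) and (c) is governed by the singularities of $S_1$: if all discrepancies are non-negative then $S_1$ is Du Val, $C \sim -K_{S_1}$ is a genuine anti-canonical member, and an arithmetic-genus computation shows it is irreducible, reduced and singular precisely at the image of the exceptional curve, located in $S_1^\circ$; otherwise one lands in the general log del Pezzo case (c).

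For Part (2), where no tiger is $\mu$-exceptional, the argument is shorter: the tiger is a curve not contracted by $\mu$, so its image $C$ under $\mu$ is a reduced irreducible curve on $S$ itself; since no $\mu$-exceptional centre is created, the log canonical non-klt pair descends with $C$ as its centre, giving plt of $K_S + C$, and the same rank-one sign computation as above yields anti-nef-ness. The step I expect to be the main obstacle is the combination of the plt verification with the exhaustiveness of the Part (1) trichotomy: one must rule out that the link degenerates into a genuinely worse configuration — a non-plt pair, or a net whose horizontal curve has degree greater than two — and it is precisely the licence to replace $E$ that makes the clean list possible. Controlling the different under adjunction on the possibly singular $S_1$, in a manner valid in every characteristic, is where the real work concentrates.
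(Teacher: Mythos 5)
First, a point of comparison: this paper does not actually prove Theorem \ref{Lac6.2} --- it is quoted wholesale from \cite[Theorem 5.3]{Nag25} --- so your proposal must stand on its own, and as it stands it has genuine gaps rather than being a variant of a proof given here. The most serious one is at the very start: Definition \ref{def:tiger} only supplies an effective $\alpha$ with $K_S+\alpha\equiv 0$ whose log pullback $\Gamma$ to $\wt S$ has $\mult_E\Gamma\geq 1$; it does \emph{not} say that $(\wt S,\Gamma)$ is log canonical, and the coefficient may well exceed one. Your entire mechanism --- ``$(\wt S,D)$ is log canonical but not klt along $E$,'' ``no strictly log canonical point survives,'' ``the log canonical, non-klt pair descends'' --- presupposes an lc normalization you never establish. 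Consequently the plt assertions in (a) and (c), which are the real content of the theorem and the reason for the licence to replace $E$, are asserted (``the different is controlled well enough'') rather than proved. In the net case the correct degree bound does exist, but it is the computation $0=(K_T+\Gamma'')\cdot F\geq (K_T+E)\cdot F=-2+E\cdot F$ using effectivity of the log pullback $\Gamma''$ of $\alpha$ and $\mult_E\Gamma''\geq 1$; your appeal to ``anti-canonical positivity'' gestures at this without doing it, and plt-ness of $K_T+E$ is left entirely open.

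Second, you misread the birational case. Since $\Delta=\emptyset$ and $f$ extracts only $E$, the boundary $\Gamma_\epsilon$ on $T$ is supported on $E$ alone, so $C=\Supp\Delta_1$ is the image of the tiger itself, not the pushforward of the full boundary $D$ as you use it. The assertion about the $\pi$-exceptional curve is that its image \emph{point} $q_1=\pi(\Sigma)$ lies on $C$; this follows from $\Sigma\cdot E>0$, forced by the Hodge index theorem on the rank-two lattice of $T$ (both $\Sigma^2\leq 0$ and $E^2<0$), and the negativity lemma you invoke says nothing here. The dichotomy between (b) and (c) is also not ``$S_1$ Du Val versus not'': case (b) carries the extra content that $C\in|-K_{S_1}|$, lies in $S_1^\circ$, and is singular precisely at $q_1$ --- this corresponds to $K_{S_1}+C$ being lc but not plt at $q_1$ (a node or cusp at a smooth point), and the Du Val-ness of $S_1$ is a conclusion requiring proof there, while case (c) can perfectly well occur with $S_1$ Du Val. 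Finally, in part (2) the step ``since no $\mu$-exceptional centre is created, plt descends'' is a non sequitur: a non-plt place of $(S,C)$, e.g.\ over a singular point of $C$ at a smooth point of $S$, need not appear as a divisor on $\wt S$, so the hypothesis that no tiger in $\wt S$ is exceptional does not directly exclude it; ruling this out (or replacing $C$) is exactly where the work lies. Only your anti-nefness argument is essentially sound: the coefficient of the non-exceptional tiger in $\Gamma$ equals $\mult_C\alpha\geq 1$, so $-(K_S+C)\equiv\alpha-C$ is effective, hence nef since $\rho(S)=1$.
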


This theorem shows that the classification of certain almost log canonical pairs can be reduced to that of the pairs in assertion (1)(b) or (1)(c). 
The former can be easily classified in a manner similar to \cite[Lemma B.9]{Lac} (see \S \ref{DuVal} for more details), and the latter is classified in \cite[\S 5.1]{Lac}.
In particular, we observe that the results of \cite[\S 5.2]{Lac} need not be used.

For this reason, in this paper, we review \cite[\S\S 4--5.1]{Lac} with a focus on where and how Lacini enhanced the techniques of \cite{KM}.
We also provide modified proofs for several results.
As a consequence, we point out that there is at most one log del Pezzo surface that may have been erroneously omitted from the list in \cite[\S 6.1]{Lac}, namely the surface in Example \ref{eg:cexLac4.19}.
We do not know whether such a surface has a tiger in its minimal resolution.
We also clarify where the assumption $\Char(k) \neq 2,3$ is needed, and extend the results of \cite[\S 4.2]{Lac} to arbitrary characteristic.

This paper is structured as follows.
\S \ref{sec:pre} contains some definitions and preliminary lemmas.
In \S \ref{LacS4} and \S \ref{LacS5.1}, we present the content of \cite[\S 4]{Lac} and \cite[\S 5.1]{Lac} respectively and compare them with \cite{KM}.
In \S \ref{DuVal}, we briefly give the classification of pairs as in assertion (1)(b) of Theorem \ref{Lac6.2} when $\Char(k) \neq 2,3$.

\subsection{Notation}
We work over an algebraically closed field $k$ of characteristic $\Char(k) \geq 0$.
A \textit{variety} means an integral separated scheme of finite type over $k$.
A \textit{curve} (resp.\ a \textit{surface}) means a variety of dimension one (resp.\ two).
A \textit{log pair} $(S, \Delta)$ is the pair of a normal projective variety $S$ and an effective $\Q$-Weil divisor $\Delta$ such that $K_S+\Delta$ is $\Q$-Cartier.
We say that a $\Q$-Weil divisor $\Delta$ is a \textit{boundary} if the irreducible decomposition $\Delta = \sum_i c_i D_i$ satisfies $0 \leq c_i \leq 1$.
We say that a normal surface is \textit{Du Val} if it has only canonical singularities. 
A \textit{log del Pezzo surface} is a projective surface with only klt singularities whose anti-canonical divisor is ample.
We often call the Picard rank of a log del Pezzo surface simply the \textit{rank}.

Throughout this paper, we use the following notation:
\begin{itemize}
\item $\wt S$ : the minimal resolution of a normal surface $S$.
\item $S^\circ$ : the smooth locus of a normal surface $S$.
\item $C_T$ : the strict transformation of a closed subscheme $C$ of a normal variety $S$ in a birational model $T$ of $S$.
\item $g(C)$: the arithmetic genus of a projective curve $C$.
\end{itemize}

\section{Preliminary}\label{sec:pre}

\subsection{Dynkin types}

In this subsection, we set up notation and terminology of klt singularities of dimension two.

\begin{defn}
For integers $c, l_1, \ldots, l_i, m_1, \ldots, m_j, n_1, \ldots n_k \in \Z_{\geq 2}$, the symbol $[n_1, \ldots, n_k]$ (resp.\ $[c; [l_1, \ldots, l_i], [m_1, \ldots, m_j], [n_1, \ldots, n_k]]$) stands for the weighted dual graph as in Figure \ref{fig:chain} (resp.\ Figure \ref{fig:star}).
For example, $[2;[2],[2],[2^{k-3}]]$ and $[2;[2],[2^2],[2^{k-4}]]$ are the exceptional divisors of the Du Val singularities $D_k$ and $E_k$, respectively.
\end{defn}

\begin{figure}[ht] 
		\centering
		\begin{tikzpicture}
		
		\draw[] (0,0)circle(2pt);
		\draw[] (1,0)circle(2pt);
		\draw[] (2,0)circle(2pt);
		\node(a)at(3,0){$\cdots$};
		\draw[] (4,0)circle(2pt);
		
		\draw[thin ](0.2,0)--(0.8,0);
		\draw[thin ](1.2,0)--(1.8,0);
		\draw[thin ](2.2,0)--(2.6,0);
		\draw[thin ](3.4,0)--(3.8,0);

		\node(a)at(0,0.45){$-n_1$};
		\node(a)at(1,0.45){$-n_2$};
		\node(a)at(2,0.45){$-n_3$};
		\node(a)at(4,0.45){$-n_k$};

		\end{tikzpicture}
		\caption{The weighted graph $[n_1, n_2, \ldots, n_k]$.}\label{fig:chain}
		\centering
		\begin{tikzpicture}
		
		\draw[ ] (-3,0)circle(2pt);
		\node(a)at(-2,0){$\cdots$};
		\draw[] (-1,0)circle(2pt);
		\draw[] (0,0)circle(2pt);
		\draw[] (1,0)circle(2pt);
		\node(a)at(2,0){$\cdots$};
		\draw[] (3,0)circle(2pt);
		\draw[] (0,-1)circle(2pt);
		\node(a)at(0,-2){$\vdots$};
		\draw[] (0,-3)circle(2pt);
		
		\draw[thin ](-2.8,0)--(-2.4,0);
		\draw[thin ](-1.6,0)--(-1.2,0);
		\draw[thin ](-0.8,0)--(-0.2,0);
		\draw[thin ](0.2,0)--(0.8,0);
		\draw[thin ](1.2,0)--(1.6,0);
		\draw[thin ](2.4,0)--(2.8,0);
		\draw[thin ](0,-0.2)--(0,-0.8);
		\draw[thin ](0,-1.2)--(0,-1.8);
		\draw[thin ](0,-2.4)--(0,-2.8);

		\node(a)at(-3,0.45){$-n_k$};
		\node(a)at(-1,0.45){$-n_1$};
		\node(a)at(0,0.45){$-c$};
		\node(a)at(1,0.45){$-m_1$};
		\node(a)at(3,0.45){$-m_j$};
		\node(a)at(-0.45,-1){$-l_1$};
		\node(a)at(-0.45,-3){$-l_i$};

		\end{tikzpicture}
		\caption{The weighted graph $[c; [l_1, \ldots, l_i]$, $[m_1, \ldots$, $m_j]$, $[n_1, \ldots, n_k]]$.}\label{fig:star}
	\end{figure}

\begin{defn}\label{Dynkin}
Let $S$ be a surface with klt singularities and $\sigma \colon \wt S \to S$ the minimal resolution.
The \textit{Dynkin type} of $S$, denoted by $\Dyn(S)$, is defined to be the weighted dual graph of the reduced $\sigma$-exceptional divisor.
If $S$ has only Du Val singularities, its type is sometimes indicated by enclosing the singularity type in parentheses after $S$. For example, we write $S(3A_1+D_4)$ if $S$ has exactly three $A_1$-singularities and one $D_4$-singularity.
\end{defn}

\subsection{Hunt steps}

In this subsection, we recall certain Sarkisov links called hunt steps, which are used in \cite{KM} and \cite{Lac}.

\begin{defn}
Let $(X, \Delta)$ be a log pair. 
Let $f \colon Y \to X$ be a birational morphism with $Y$ normal, and let $\Gamma$ be the log pullback of $\Delta$ via $f$, i.e., the $\Q$-Weil divisor defined by $K_Y+\Gamma=f^*(K_X+\Delta)$.
For a prime divisor $E$ on $Y$, we define the \textit{coefficient} $e(E; X, \Delta)$ of $E$ with respect to the pair $(X, \Delta)$ to be the multiplicity of $\Gamma$ along $E$.
\end{defn}

\begin{defn}[{\cite[Definition 3.10]{Lac}}]\label{def:tiger}
Let $(X, \Delta)$ be a log pair.
Let $f \colon Y \to X$ be a birational morphism.
We say $X$ \textit{has a tiger in} $Y$ if there exists an effective $\Q$-Cartier divisor $\alpha$ such that
\begin{enumerate}
    \item $K_X+\Delta+\alpha$ is numerically trivial.
    \item If $\Gamma$ is the log pullback of $\Delta+\alpha$, then there is a divisor $E$ of coefficient at least one in $\Gamma$.
\end{enumerate}
Any such divisor $E$ is called \textit{tiger}.
\end{defn}

The following geometric situation will be common.

\begin{defn}[{\cite[Definition 3.11]{Lac}}]
Let $A$ and $B$ be two rational curves on a klt surface $S$ such that $K_S+A+B$ is divisorially log terminal at any singular point of $S$.
We say that $(S, A+B)$ is a
\begin{enumerate}
    \item \textit{banana}, if $A$ and $B$ meet in exactly two points, and there transversally.
    \item \textit{fence}, if $A$ and $B$ meet in exactly one point, and there transversally.
    \item \textit{tacnode}, if $A$ and $B$ meet in at most two points, there is one point $q \in A \cap B$ such that $A+B$ has a node of genus $g \geq 2$ at $q$, and if there is a second point of intersection, then $A$ and $B$ meet there transversally.
\end{enumerate}
\end{defn}

\begin{defn}[{\cite[Definition 8.0.2]{KM}}]
Let $(X, \Delta)$ be a log pair with $\Delta$ a boundary and $\Delta = \sum_{i=1}^n a_i D_i$ the irreducible decomposition.
We say that $(X, \Delta)$ is \textit{flush} (resp.\ \textit{level}) if $e(E; X, \Delta) < \min\{a_1, \ldots, a_n, 1\}$ (resp.\ $e(E; X, \Delta) \leq \min\{a_1, \ldots, a_n, 1\}$) for all exceptional divisors $E$ over $X$.
\end{defn}

\begin{lem}[{\cite[Definition-Lemma 8.2.5]{KM} and \cite[Lemma 3.12]{Lac}}]\label{lem:Sarkisov}
Let $(S, \Delta)$ be a log pair such that $S$ is a log del Pezzo surface of rank one.
Let $f \colon T \to S$ be an extraction of relative Picard rank one of an irreducible divisor $E$ of the minimal resolution of $S$.
The Mori cone $\overline{\mathrm{NE}}(T)$ has two edges, one of which is generated by $E$.
Let $R$ be the other edge.
Let $x=f(E)$, $\Gamma$ the log pullback of $\Delta$, and $\Gamma_\epsilon = \Gamma + \epsilon E$, where $0 < \epsilon \ll 1$.
Assume that $-(K_S+\Delta)$ is ample.
Then the following hold.
\begin{enumerate}
    \item[\textup{(1)}] $R$ is $K_T$-negative and contractible,
    hence there is a rational curve $\Sigma$ that generates $R$.
    Let $\pi$ be the contraction morphism of $R$.
    \item[\textup{(2)}] $K_T+\Gamma_\epsilon$ is anti-ample.
    \item[\textup{(3)}] $\Gamma_\epsilon$ is $E$-negative.
    \item[\textup{(4)}] There is a unique rational number $\lambda$ such that with $\Gamma' = \lambda\Gamma_\epsilon$, $K_T+\Gamma'$ is $R$-trivial. 
    Moreover, $\lambda >1$.
    \item[\textup{(5)}] $K_T+\Gamma'$ is $E$-negative.
    \item[\textup{(6)}] $\pi$ is either a $\PP^1_k$-fibration (called \textit{``net''}) or birational.
    \item[\textup{(7)}] If $\pi \colon T \to S_1$ is birational, and $\Delta_1 = \pi_*\Gamma'$, then $K_{S_1}+\Delta_1$ is anti-ample and $S_1$ is a log del Pezzo surface of rank one.
    \item[\textup{(8)}] If  $(S, \Delta)$ does not have a tiger in a surface $Y$ that dominates $T$, then neither do $(T, \Gamma')$ and $(S_1, \Delta_1)$.  
\end{enumerate}
\end{lem}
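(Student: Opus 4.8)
\emph{Overall strategy.} The proof is a two-ray game on the rank-two surface $T$. As recorded in the statement, $\rho(T)=2$ (because $f$ has relative Picard number one over the rank-one surface $S$), so $\overline{\mathrm{NE}}(T)$ is a two-dimensional cone with exactly the two extremal rays $[E]$ and $R$. The key structural inputs are: $E^2<0$, since $E$ is $f$-exceptional and $x=f(E)$ is a point; the discrepancy $a:=a(E;S)$ satisfies $a\in(-1,0]$ and $K_T=f^*K_S+aE$, because $E$ appears on the minimal resolution of the klt surface $S$; and $-(K_T+\Gamma)=-f^*(K_S+\Delta)$ is nef and vanishes on $\overline{\mathrm{NE}}(T)$ exactly along $[E]$, because $-(K_S+\Delta)$ is ample. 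These facts drive everything.

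\emph{Parts \textup{(1)}--\textup{(3)}.} For (1), let $\Sigma$ generate $R$. Since $\Sigma\notin[E]=\ker(f_*)$, the class $f_*\Sigma$ is a nonzero effective class on the rank-one surface $S$, so $K_S\cdot f_*\Sigma<0$; then $K_T\cdot\Sigma=K_S\cdot f_*\Sigma+a(E\cdot\Sigma)<0$ using $a\le0$ and $E\cdot\Sigma\ge0$. Thus $R$ is $K_T$-negative, and the cone and contraction theorems for klt surfaces (valid in every characteristic, with the generating rational curve produced by bend and break) give $\pi$ and a rational $\Sigma$. For (2), I would apply Kleiman's criterion on the two rays: $-(K_T+\Gamma_\epsilon)\cdot E=-\epsilon E^2>0$, while $-(K_T+\Gamma_\epsilon)\cdot\Sigma=-(K_T+\Gamma)\cdot\Sigma-\epsilon(E\cdot\Sigma)>0$ for $0<\epsilon\ll1$, since $-(K_T+\Gamma)\cdot\Sigma>0$; hence $-(K_T+\Gamma_\epsilon)$ is ample. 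For (3), $\Gamma\cdot E=-K_T\cdot E=-aE^2$, so $\Gamma_\epsilon\cdot E=(\epsilon-a)E^2<0$.

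\emph{Parts \textup{(4)}--\textup{(7)}.} For (4), $(K_T+\lambda\Gamma_\epsilon)\cdot\Sigma=0$ is linear in $\lambda$, so I first show $\Gamma_\epsilon\cdot\Sigma>0$: if it were $\le0$, then together with $\Gamma_\epsilon\cdot E<0$ from (3) the effective nonzero divisor $\Gamma_\epsilon$ would be nonpositive on both extremal rays, hence anti-nef, contradicting $\Gamma_\epsilon\cdot H>0$ for an ample $H$. Thus $\lambda=-K_T\cdot\Sigma/(\Gamma_\epsilon\cdot\Sigma)$ is the unique solution, and $\lambda>1$ is equivalent to $-(K_T+\Gamma_\epsilon)\cdot\Sigma>0$, which is (2). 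For (5), write $K_T+\Gamma'=(K_T+\Gamma_\epsilon)+(\lambda-1)\Gamma_\epsilon$; both summands are $E$-negative by (2) and (3) (using $\lambda>1$). Part (6) is the contraction theorem applied to the $K_T$-negative extremal contraction $\pi$ of the surface $T$: it is either divisorial, so $S_1$ is a surface, or of fiber type over a curve (the base cannot be a point as $\rho(T)=2$), giving a $\mathbb{P}^1$-fibration. For (7), since $K_T+\Gamma'$ is $R$-trivial by (4) it descends to $\pi^*(K_{S_1}+\Delta_1)$ with $\Delta_1=\pi_*\Gamma'\ge0$; intersecting with $\pi_*E$, a generator of $\overline{\mathrm{NE}}(S_1)$, and using (5) shows $-(K_{S_1}+\Delta_1)$ is ample, whence $-K_{S_1}$ is ample because $\Delta_1\ge0$ and $\rho(S_1)=1$, and $S_1$ is klt since $\pi$ is a $K_T$-negative contraction of the klt surface $T$.

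\emph{Part \textup{(8)}, the main obstacle.} I would track tigers through crepant bookkeeping. Suppose $(T,\Gamma')$ has a tiger on a model $Y\to T$: an effective $\alpha_T$ with $K_T+\Gamma'+\alpha_T\equiv0$ and a divisor $D$ of coefficient $\ge1$ in the log pullback of $\Gamma'+\alpha_T$. Set $\alpha_T':=\alpha_T+(\lambda-1)\Gamma+\lambda\epsilon E$, which is effective precisely because $\lambda>1$; then $\Gamma+\alpha_T'=\Gamma'+\alpha_T$ and $K_T+\Gamma+\alpha_T'\equiv0$. Pushing forward, $\alpha_S:=f_*\alpha_T'\ge0$ satisfies $K_S+\Delta+\alpha_S\equiv0$. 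The crux is that $(T,\Gamma+\alpha_T')$ is crepant over $(S,\Delta+\alpha_S)$: the divisors $K_T+\Gamma+\alpha_T'$ and $f^*(K_S+\Delta+\alpha_S)$ have the same pushforward and are both numerically trivial, so their difference is a multiple of $E$ which is numerically trivial, hence zero as $E^2<0$. Crepancy makes the coefficient of every divisor over $S$ agree in the two log pullbacks, so $D$ is also a tiger for $(S,\Delta)$ in $Y$, contradicting the hypothesis. Finally $(S_1,\Delta_1)$ is crepant to $(T,\Gamma')$ via $\pi$ by (4), so a tiger for $(S_1,\Delta_1)$ on a model dominating $T$ yields one for $(T,\Gamma')$, and hence for $(S,\Delta)$. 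I expect this crepant comparison — keeping the completion effective via $\lambda>1$ and identifying the two boundaries exactly rather than merely numerically — to be the main difficulty; the positivity statements (1)--(7) are routine intersection theory on $T$ once the bound $a\in(-1,0]$ and the nefness of $-(K_T+\Gamma)$ are in hand.
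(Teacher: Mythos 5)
Your proof is correct and follows essentially the same route as the argument behind the cited sources \cite[Definition-Lemma 8.2.5]{KM} and \cite[Lemma 3.12]{Lac} (the paper itself quotes the lemma without reproving it): the two-ray game on $\rho(T)=2$ using $a(E;S)\in(-1,0]$, $E^2<0$, and the nefness of $-(K_T+\Gamma)$ for (1)--(7), and for (8) the crepant identification $K_T+\Gamma+\alpha_T'=f^*(K_S+\Delta+\alpha_S)$ with $\alpha_T'=\alpha_T+(\lambda-1)\Gamma+\lambda\epsilon E$ effective precisely because $\lambda>1$, which is exactly the bookkeeping used there. No gaps.
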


\begin{defn}[{\cite[Definition-Remark 8.2.8]{KM} and \cite[Definition 3.13]{Lac}}]
We call the transformation $(f, \pi)$ as in Lemma \ref{lem:Sarkisov} \textit{a hunt step} for $(S, \Delta)$ if $e(E; S, \Delta)$ is maximal among exceptional divisors of the minimal resolution of $S$.
If $x$ is a chain singularity (resp.\ a non-chain singularity) and $\Delta = \emptyset$, we require $E$ not to be a $(-2)$-curve (resp.\ to be the central curve). 
This is always possible by \cite[8.3.9 Lemma and 10.11 Lemma]{KM}.
\end{defn}

We fix the notation that we will use when running the hunt.

\begin{nota}[{\cite[Notation 3.14]{Lac}}]\label{nota:hunt}
    We always start from a surface without boundary, so $\Delta_0=\emptyset$.
    We index by $(f_i, \pi_{i+1})$ the next hunt step for $(S_i, \Delta_i)$. Define:
    \begin{itemize}
        \item $x_i = f_i(E_{i+1}) \in S_i$.
        \item $q_{i+1} =\pi_{i+1}(\Sigma_{i+1}) \in S_{i+1}$.
        \item $\Gamma_{i+1}$ to be the log pullback of $\Delta_i$: $K_{T_{i+1}}+\Gamma_{i+1}=f^*_{i}(K_{S_i}+\Delta_i)$.
        \item $\Delta_{i+1}=\pi_{i+1}(\Gamma'_{i+1})$: it satisfies $K_{T_{i+1}}+\Gamma'_{i+1}=\pi^*_{i+1}(K_{S_{i+1}}+\Delta_{i+1})$.
        \item $A_1 = \pi_1(E_1) \subset S_1$ and $B_2 = \pi_2(E_2) \subset S_2$.
        \item $A_2$ the strict transform of $A_1$ in $S_2$.
    \end{itemize}
    Let $a_1$, $b_2$ be the coefficients of $A_1$, $B_2$ in $\Delta_1$, $\Delta_2$ (which are also the coefficients of $E_1$, $E_2$ in $\Gamma'_1$, $\Gamma'_2$) and $a_2$ the coefficient of $A_2$ in $\Delta_2$.
    We remark that $a_1, b_2 < a_2$ by the flush condition and the previous scaling.
    Let $e_i$ be the coefficient of $E_{i+1}$ in $(T_{i+1}, \Gamma_{i+1})$.
    This is also the coefficient over the pair $(S_i, \Delta_i)$.
    Finally, we indicate by $\overline \Sigma_i$ the image of $\Gamma_i$ in $S_0$ or $S_1$, depending on the context.
\end{nota}

The following proposition is the key to classifying log del Pezzo surfaces of rank one without tigers.

\begin{prop}[{\cite[Proposition 8.4.7]{KM} and \cite[Proposition 3.15]{Lac}}]\label{general-strategy}
Suppose that $S$ is a rank one log del Pezzo surface that has no tigers in $\wt S$.
For the first hunt step: $K_{T_1}+E_1$ is divisorially log terminal. $K_{T_1}+\Gamma_1'$ is flush and one of the following holds.

\begin{enumerate}
    \item[\textup{(1)}] $T_1$ is a net.
\end{enumerate}

Otherwise $K_{S_1}+a_1A_1$ is flush and one of the following holds.

\begin{enumerate}
    \item[\textup{(2)}] $g(A_1)>1$.
    \item[\textup{(3)}] $g(A_1)=1$ and $A_1$ has an ordinary node at $q=q_1$.
    \item[\textup{(4)}] $g(A_1)=1$ and $A_1$ has an ordinary cusp at $q=q_1$.
    \item[\textup{(5)}] $g(A_1)=0$ and $K_{S_1}+A_1$ is divisorially log terminal.
\end{enumerate}

For the second hunt step one of the following holds.

\begin{enumerate}
    \item[\textup{(6)}] $T_2$ is a net.
    \item[\textup{(7)}] $A_1$ is contracted by $\pi_2$, $K_{T_2}+\Gamma'_2$ is flush; $K_{S_1}+A_1$ is divisorially log terminal, $q_2$ is a smooth point of $S_2$, $B_2$ is singular at $q_2$ with a unibranch singularity, and $K_{S_2}+\Delta_2$ is flush away from $q_2$, but is not level at $q_2$. $\Sigma_2$ is the only exceptional divisor at which $K_{S_2}+\Delta_2$ fails to be flush.
    \item[\textup{(8)}] $\Delta_2$ has two components.
\end{enumerate}

Suppose that in this last case that $a_2+b_2 \geq 1$.
Then $\overline \Sigma_2 \cap \Sing(A_1) = \emptyset$, $K_{T_2}+\Gamma'_2$ is flush away from $\Sing(A_1)$.
$K_{S_2}+\Delta_2$ is flush away from $\pi_2(\Sing(A_1))$, and at least one of $-(K_{S_2}+A)$ or $-(K_{S_2}+B)$ is ample.
Also, one of the following holds:

\begin{enumerate}
    \item[\textup{(9)}] $(S_2, A+B)$ is a fence.
    \item[\textup{(10)}] $(S_2, A+B)$ is a banana, $K_{S_2}+B$ is plt, and $x_1 \in A$.
    \item[\textup{(11)}] $(S_2, A+B)$ is a tacnode, with tacnode at $q_2$.
    $K_{S_2}+B$ is plt. 
    If $x_1 \in A$, $A \cap B = \{x_1,q_2\}$.
    If $x_1 \not \in A$, then $A \cap B = \{q_2\}$.
\end{enumerate}    
\end{prop}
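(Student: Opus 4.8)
The plan is to run the hunt of Lemma \ref{lem:Sarkisov} starting from $(S_0, \Delta_0) = (S, \emptyset)$ and to track the flush condition at each stage, exploiting that the absence of tigers in $\wt S$ is inherited through the hunt by part (8) of that lemma. Since $S$ has rank one and $-K_S$ is ample, the no-tiger hypothesis says precisely that for every effective $\alpha$ with $K_S + \alpha$ numerically trivial the log pullback to $\wt S$ has all coefficients $< 1$; a perturbation argument (adding a small multiple of such an $\alpha$ and rescaling) converts this into the flushness statements asserted throughout, so the whole proposition is really an unwinding of Lemma \ref{lem:Sarkisov} under this flushness input.

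First I would analyze the first hunt step. Applying Lemma \ref{lem:Sarkisov} to the extraction $f_1$ of $E_1$ gives that $\Gamma_1'$ is $E_1$-negative, that $K_{T_1} + \Gamma_1'$ is anti-ample and $R$-trivial, and the net/birational dichotomy of part (6). The divisorial log terminality of $K_{T_1} + E_1$ follows from the choice of $E_1$ in the hunt step: for a chain singularity we forbid $E_1$ from being a $(-2)$-curve and for a non-chain singularity we take it central, so the exceptional configuration over $x_0$ is dlt with $E_1$ appearing with the correct coefficient. If $\pi_1$ is a net we are in case (1). Otherwise $\pi_1$ is birational, and I would identify $A_1 = \pi_1(E_1)$ and compute its arithmetic genus by adjunction on $T_1$, using that the only curve contracted by $\pi_1$ is the ray $R = \R_{\geq 0}[\Sigma_1]$ meeting $E_1$. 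Flushness of $K_{S_1} + a_1 A_1$ is the transported no-tiger condition, and a local analysis at $q_1 = \pi_1(\Sigma_1)$ then splits the birational case by the genus and the singularity type of $A_1$ into cases (2)--(5): $g(A_1) > 1$; $g(A_1) = 1$ with an ordinary node; $g(A_1) = 1$ with an ordinary cusp; or $g(A_1) = 0$ with $K_{S_1} + A_1$ dlt. Distinguishing node from cusp and ruling out worse unibranch singularities is delicate; in characteristic zero this is where \cite{KM} invoke topological input, and in positive characteristic it is exactly the place where the Bogomolov-type bound is unavailable, so I expect this to be one of the two main obstacles.

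Next I would repeat the construction for $(S_1, \Delta_1) = (S_1, a_1 A_1)$ to obtain the second hunt step $(f_1, \pi_2)$ and $B_2 = \pi_2(E_2)$. Lemma \ref{lem:Sarkisov} again yields either a net (case (6)) or a birational $\pi_2$, and in the latter case the key alternative is whether $\pi_2$ contracts the strict transform of $A_1$. If it does, then $\Delta_2$ is supported on the single curve $B_2$, and I would verify the package in case (7): flushness of $K_{T_2} + \Gamma_2'$, dlt-ness of $K_{S_1} + A_1$, smoothness of $q_2$ on $S_2$, the unibranch singularity of $B_2$ at $q_2$, and that $\Sigma_2$ is the unique place where flushness fails. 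Each of these is read off from the contraction together with the scaling inequalities $a_1, b_2 < a_2$ recorded in Notation \ref{nota:hunt}. If instead the strict transform of $A_1$ survives, then $\Delta_2 = a_2 A_2 + b_2 B_2$ has two components, which is case (8).

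Finally, under the extra hypothesis $a_2 + b_2 \geq 1$ in case (8), I would carry out the intersection-theoretic classification of $(S_2, A + B)$ with $A = A_2$ and $B = B_2$. The statements $\overline{\Sigma}_2 \cap \Sing(A_1) = \emptyset$ and the two flushness-away-from assertions follow by tracking coefficients back along the hunt and using that $\Sigma_2$ is the only offending exceptional divisor. The heart is to bound $A \cdot B$ and the local intersection multiplicities using adjunction together with the flush and plt constraints and $a_2 + b_2 \geq 1$: this should force $A$ and $B$ to meet in at most two points and pin down the local picture, producing the trichotomy fence (9) / banana (10) / tacnode (11), the plt-ness of $K_{S_2} + B$, and the recorded position of $x_1$. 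I expect the delicate step here to be excluding higher tangency and controlling the genus-$\geq 2$ tacnode case, again because the characteristic-zero arguments are not directly available; checking that the required estimates survive for $\Char(k) \neq 2, 3$ is the second main obstacle.
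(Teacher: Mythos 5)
First, a point of orientation: the paper never proves this proposition at all --- it is quoted verbatim from \cite[Proposition 8.4.7]{KM} and \cite[Proposition 3.15]{Lac} and used as a black box, so the only fair comparison is with the Keel--M\textsuperscript{c}Kernan/Lacini proof that your outline is implicitly reconstructing. Your architecture does match theirs (run two hunt steps via Lemma \ref{lem:Sarkisov}, convert the no-tiger hypothesis into flushness, then do a local analysis at $q_1$ and $q_2$), but what you have written is a plan rather than a proof: you yourself flag the two places where all the content lives --- classifying the singularity of $A_1$ at $q_1$ in cases (2)--(5), and the fence/banana/tacnode trichotomy (9)--(11) --- as ``main obstacles'' and leave them unresolved. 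In \cite{KM} these are not handled by topological input that mysteriously fails in positive characteristic; they are handled by the coefficient calculus of \cite[\S 8.3]{KM} (notably Lemma 8.3.7, which bounds $\mult_{q_1}A_1$ and the local intersection behaviour directly from flushness of $K_{S_1}+a_1A_1$, exactly as this paper uses it in \S\ref{LacS4.2}), and the case-(7) package and the plt/position statements in (10)--(11) likewise come from explicit coefficient computations, not from ``reading off'' the contraction. Without carrying out those computations the dichotomies you assert are not established.

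There is also a concrete logical gap in your very first reduction. You claim that the absence of tigers in $\wt S$, via a perturbation argument, ``converts into the flushness statements asserted throughout.'' But flushness quantifies over \emph{all} exceptional divisors over the surface in question, whereas the hypothesis here (following Lacini, and weaker than KM's) only forbids divisors of coefficient $\geq 1$ \emph{on the minimal resolution} $\wt S$. A failure of flushness at some divisor over $S_1$ or $S_2$ produces, after your rescaling trick, a divisor of coefficient $\geq 1$ over $S_0$ --- but not automatically one living on $\wt S_0$, so no contradiction yet. This distinction is not pedantic: the present paper makes exactly this point in Remark \ref{rem:Section4.3}(1), where a tiger obtained by blowing up a cusp ``is not contained in $\tilde{S}_0$'' and hence ``does not yet lead to a contradiction,'' forcing an extra argument. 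Bridging the gap requires the KM-style lemmas relating coefficients of arbitrary divisors to divisors actually extracted by the hunt or visible on $\wt S_0$ (the \cite[8.3.1, 8.3.2, 8.3.9]{KM} circle of results, together with part (8) of Lemma \ref{lem:Sarkisov}, which you invoke but do not combine with the flushness claims). As it stands, your proposal would prove the proposition only under the stronger hypothesis that $S$ has no tigers on any birational model, not under the stated hypothesis of no tigers in $\wt S$.
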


\subsection{Klt singularities with small coefficients}

In this subsection, we recall the classification for klt surface singularities with coefficient $<\frac35$.

\begin{defn}
For a klt surface germ $(S, x)$, let $\wt S \to S$ be the minimal resolution and $\sum_{i=1}^n E_i$ the exceptional divisors.
Then we define \textit{the gap of $(S, x)$} as $\mathrm{Gap}(x) \coloneqq n+\sum_{i=1}^n e(E_i, K_S)(2+E_i^2)$.
\end{defn}

\begin{lem}[cf. {\cite[Lemma 3.30]{Lac}}]\label{lem:gap}
Let $(S, x)$ be a klt surface germ with $e(x)<\frac35$.
Then Table \ref{tab:e35} is the list of $\mathrm{Gap}(x)$ and its integral part.
\begin{table}[htbp]
    \centering
\caption{Gaps of klt singularities with coefficient $< \frac35$}
    \label{tab:e35}
    \renewcommand{\arraystretch}{1.5}
\begin{tabular}{|c||c|c|}\hline
 $\Dyn(x)$ &$\mathrm{Gap}(x_0)$& $\lfloor \mathrm{Gap}(x_0) \rfloor$  \\ \hline \hline
 \text{Du Val singularity of index $k$} & $k$ & $k$ \\ \hline 
 $[3, 2^k]$ with $k \in \ZZ_{\geq 0}$ & $\frac{2(k+1)^2}{2k+3}$ & $k$  \\ \hline
 $[4]$ & $0$ & $0$ \\ \hline
 $[3, 2^k, 3]$ with $k \in \ZZ_{\geq 0}$ & $k+1$ & $k+1$ \\ \hline
 $[2,3,2]$   &$\frac52$       & $2$\\ \hline
 $[2,3,2^2]$ &$\frac{38}{11}$ & $3$\\ \hline
 $[2,3,2^3]$ &$\frac{31}{7}$  & $4$\\ \hline
 $[2,3,2^4]$ &$\frac{92}{17}$ & $5$\\ \hline
 $[2; [2], [2], [2^k, 3]]$& $\frac{2k+7}{2}$ & $k+3$\\ \hline
 $[2,4]$ & $\frac67$ & $0$ \\ \hline
\end{tabular}
\end{table}

\end{lem}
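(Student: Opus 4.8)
The plan is to reduce everything to the linear system that determines the coefficients $c_i \coloneqq e(E_i, K_S)$ from the weighted dual graph. Writing $K_{\wt S} = \sigma^* K_S + \sum_i a_i E_i$, the log pullback of the zero boundary is $\Gamma = \sum_i c_i E_i$ with $c_i = -a_i \ge 0$; since $\sigma^* K_S \cdot E_j = 0$ for every $j$ and $K_{\wt S} \cdot E_j = -2 - E_j^2$ by adjunction on the smooth rational curve $E_j$, intersecting the discrepancy formula with $E_j$ gives
\[ \sum_i c_i\,(E_i \cdot E_j) = 2 + E_j^2 \qquad (1 \le j \le n). \]
The intersection matrix on the left is the negative definite matrix read off from Figures \ref{fig:chain}--\ref{fig:star}, so this determines the $c_i$ uniquely. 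I would first record that $e(x) = \max_i c_i$ (further blow-ups only decrease the maximal coefficient, since blowing up the node of $E_i, E_j$ produces coefficient $c_i + c_j - 1 \le \max(c_i,c_j)$ by klt-ness), so that the first column of the table is exactly the combinatorial list of \cite[Lemma 3.30]{Lac} of klt (hence quotient) germs whose solution satisfies $\max_i c_i < \tfrac35$; I take that classification as input and must only confirm the inequality for each listed graph and compute the last two columns.

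The key simplification is that $2 + E_i^2 = 0$ for every $(-2)$-curve, so $\mathrm{Gap}(x) = n + \sum_{E_i^2 \le -3} c_i(2+E_i^2)$ depends only on the $c_i$ at the finitely many non-$(-2)$ vertices. To treat the three infinite families uniformly I would use that along any chain of $(-2)$-curves the system reads $c_{j-1} - 2c_j + c_{j+1} = 0$, so the coefficients are affine-linear in the position along the chain; together with the boundary equations at the $(-3)$- and $(-4)$-vertices and at the branch point this reduces each case to a one- or two-parameter solve. For $[3,2^k]$ one finds $c_j = \tfrac{k+1-j}{2k+3}$, so the $(-3)$-curve contributes $-\tfrac{k+1}{2k+3}$ and $\mathrm{Gap} = (k+1) - \tfrac{k+1}{2k+3} = \tfrac{2(k+1)^2}{2k+3}$; for $[3,2^k,3]$ the symmetry of the two end equations forces the affine function to be constant, giving all $c_i = \tfrac12$ and $\mathrm{Gap} = (k+2) - 1 = k+1$; and for the star $[2;[2],[2],[2^k,3]]$ the central equation forces the middle leg to be constant equal to $\tfrac12$ as well, giving $\mathrm{Gap} = (k+4) - \tfrac12 = \tfrac{2k+7}{2}$.

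The remaining rows are single finite solves: $[4]$ gives $c = \tfrac12$ and $\mathrm{Gap} = 0$; $[2,4]$ gives $(c_1,c_2) = (\tfrac27, \tfrac47)$ and $\mathrm{Gap} = 2 - \tfrac87 = \tfrac67$; and the four chains $[2,3,2^m]$ ($m=1,\dots,4$) all have $\mathrm{Gap} = (m+2) - \tfrac{2(m+1)}{3m+5}$, yielding $\tfrac52, \tfrac{38}{11}, \tfrac{31}{7}, \tfrac{92}{17}$. The Du Val case is immediate, since there every $c_i = 0$ and $\mathrm{Gap} = n = k$. Finally I would read off each floor by a one-line estimate, e.g.\ $\tfrac{2(k+1)^2}{2k+3} = k + \tfrac{k+2}{2k+3}$ with $0 < \tfrac{k+2}{2k+3} < 1$ forces floor $k$, and $\tfrac{2k+7}{2} = (k+3) + \tfrac12$ forces floor $k+3$. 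The main obstacle is not any individual computation — each is routine linear algebra — but rather keeping the infinite-family solves uniform and verifying that the $\tfrac35$ threshold is sharp: the borderline entries $[2,4]$ with $\max c_i = \tfrac47$ and $[2,3,2^4]$ with $\max c_i = \tfrac{10}{17}$ just satisfy $<\tfrac35$, whereas $[2,3,2^5]$ gives exactly $\tfrac35$ and is correctly excluded, which is what confirms that the first column is complete.
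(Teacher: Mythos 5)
Your proposal is correct and matches the paper's proof in substance: the paper's entire argument is to cite the classification of klt germs with coefficient $<\frac35$ (\cite[Proposition 10.1]{KM}) and then perform ``easy computations,'' which are exactly the linear solves you carry out, and all of your coefficient values, gap values, floors, and borderline checks (e.g.\ $\frac{10}{17}$ for $[2,3,2^4]$ versus the excluded $\frac35$ for $[2,3,2^5]$) are accurate. You simply write out explicitly what the paper leaves implicit, including the standard reduction of $e(x)$ to the maximal coefficient on the minimal resolution.
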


\begin{proof}
The assertion follows from \cite[Proposition 10.1]{KM} and easy computations.
\end{proof}

\begin{lem}[{\cite[Lemma 3.28]{Lac}}]\label{lem:totalgap}
Let $S$ be a log del Pezzo surface of rank one such that $\Sing(S)=\{x_1, \ldots, x_m\}$.
Then 
\begin{align*}
    K_S^2+\sum_{i=1}^m \mathrm{Gap}(x_i)=9.
\end{align*}
If $e(S)<\frac35$ in addition, then 
\begin{align*}
    K_S^2+\sum_{i \in I} \mathrm{Gap}(x_i) \leq 9
\end{align*}
for any $I \subset \{1, \ldots, m\}$.
\end{lem}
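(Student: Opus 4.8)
\textbf{Proof plan for Lemma \ref{lem:totalgap}.}

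The plan is to derive the equality by relating both sides to the Noether-type formula $K_{\wt S}^2 + \rho(\wt S) = 10$ that holds for any smooth rational surface, pulled through the minimal resolution $\sigma \colon \wt S \to S$. First I would record the two standard numerical inputs. Since $S$ has rank one, $\rho(S) = 1$, and because $\sigma$ is the minimal resolution with reduced exceptional divisor $\sum_{x_i} \sum_j E_{ij}$ (say $n_i$ curves over $x_i$), we have $\rho(\wt S) = 1 + \sum_i n_i$. Writing $K_{\wt S} = \sigma^* K_S + \sum_{i,j} e(E_{ij}; K_S)\,E_{ij}$ — here $\Delta = \emptyset$, so the discrepancy divisor is exactly $\sum_{i,j} e(E_{ij}; K_S) E_{ij}$, which is anti-effective since $S$ is klt — I would compute $K_{\wt S}^2$ by expanding the square.

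The key algebraic step is the expansion of $K_{\wt S}^2$. Using $\sigma^* K_S \cdot E_{ij} = 0$ (projection formula, since $E_{ij}$ is $\sigma$-exceptional) and $(\sigma^* K_S)^2 = K_S^2$, one obtains
\begin{align*}
    K_{\wt S}^2 = K_S^2 + \Big(\sum_{i,j} e(E_{ij}; K_S)\,E_{ij}\Big)^2.
\end{align*}
The cross-term involving $\sigma^* K_S$ vanishes entirely. Now I would use the adjunction/projection identity $\big(\sum_j e(E_{ij})E_{ij}\big) \cdot E_{ij'} = K_{\wt S} \cdot E_{ij'} = -2 - E_{ij'}^2$, which follows from the defining relation for the discrepancies together with $E_{ij'}^2$ being the self-intersection and the fact that each $E_{ij'}$ is a smooth rational curve (so $K_{\wt S} \cdot E_{ij'} + E_{ij'}^2 = -2$ by adjunction). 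Multiplying through by $e(E_{ij'}; K_S)$ and summing over $j'$ within the fibre over $x_i$ gives
\begin{align*}
    \Big(\sum_j e(E_{ij}; K_S)\,E_{ij}\Big)^2 = \sum_j e(E_{ij}; K_S)\,(-2 - E_{ij}^2) = -\sum_j e(E_{ij}; K_S)(2 + E_{ij}^2).
\end{align*}
Recognizing $\mathrm{Gap}(x_i) = n_i + \sum_j e(E_{ij}; K_S)(2 + E_{ij}^2)$, the self-intersection of the full discrepancy divisor equals $\sum_i (n_i - \mathrm{Gap}(x_i))$.

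Assembling everything, I substitute into $K_{\wt S}^2 = 10 - \rho(\wt S) = 10 - 1 - \sum_i n_i = 9 - \sum_i n_i$, giving $9 - \sum_i n_i = K_S^2 + \sum_i (n_i - \mathrm{Gap}(x_i))$. The $\sum_i n_i$ terms cancel on both sides, yielding exactly $K_S^2 + \sum_{i=1}^m \mathrm{Gap}(x_i) = 9$, as claimed. For the inequality under the hypothesis $e(S) < \frac35$, I would invoke Lemma \ref{lem:gap}, which (together with the klt condition) guarantees $\mathrm{Gap}(x_i) \geq 0$ for every singular germ; discarding the gaps of the indices outside $I$ only decreases the left-hand side, so the equality immediately implies $K_S^2 + \sum_{i \in I} \mathrm{Gap}(x_i) \leq 9$.

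The main obstacle I anticipate is justifying the two global inputs cleanly: that $\wt S$ is a smooth rational surface so the formula $K_{\wt S}^2 + \rho(\wt S) = 10$ applies (this relies on log del Pezzo surfaces being rational, which should hold since $-K_S$ ample forces $\wt S$ to be a smooth rational surface), and that $\rho(\wt S) = 1 + \sum_i n_i$ uses $\rho(S) = 1$ together with the exceptional curves being independent in $\Pic(\wt S) \otimes \Q$. Both are standard for minimal resolutions of rank one log del Pezzo surfaces, but they are where the hypotheses genuinely enter, so I would state them explicitly rather than treat the computation as purely formal.
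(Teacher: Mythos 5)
Your overall strategy is the right one, and it is essentially the standard proof of this identity; note that the paper itself does not reprove the equality (it quotes it from \cite[Lemma 3.28]{Lac}) and only proves the final inequality, exactly as in your last paragraph, by combining the equality with the nonnegativity of all the gaps listed in Table \ref{tab:e35} of Lemma \ref{lem:gap}. Your global inputs are all correct and correctly flagged: $\wt S$ is a smooth rational surface, so Noether's formula gives $K_{\wt S}^2+\rho(\wt S)=10$; $\rho(\wt S)=1+\sum_i n_i$ since the exceptional curves are independent over the rank one surface $S$; the cross term vanishes by the projection formula; and adjunction applies because each exceptional curve of the minimal resolution of a klt surface singularity is a smooth rational curve.

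However, there is a sign inconsistency that makes your assembly arithmetically false as written. In the paper's convention, $e(E;K_S)$ is the coefficient of $E$ in the log pullback $\Gamma$ with $K_{\wt S}+\Gamma=\sigma^*K_S$, so $e(E_{ij};K_S)\geq 0$ and $K_{\wt S}=\sigma^*K_S-\sum_{i,j}e(E_{ij};K_S)E_{ij}$; the anti-effective discrepancy divisor is $-\sum e(E_{ij};K_S)E_{ij}$, not $+\sum e(E_{ij};K_S)E_{ij}$. Carrying this through with $D=\sum_{i,j}e(E_{ij};K_S)E_{ij}$, one gets $D\cdot E_{ij'}=-K_{\wt S}\cdot E_{ij'}=2+E_{ij'}^2$, hence $K_{\wt S}^2-K_S^2=D^2=\sum_{i,j}e(E_{ij};K_S)(2+E_{ij}^2)=\sum_i\bigl(\Gap(x_i)-n_i\bigr)$ --- the opposite of your $\sum_i\bigl(n_i-\Gap(x_i)\bigr)$. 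Your version cannot be correct: it is nonnegative (strictly positive once some $x_i$ is not Du Val), contradicting the negative definiteness of the exceptional lattice; and in your final equation $9-\sum_i n_i=K_S^2+\sum_i(n_i-\Gap(x_i))$ the $\sum_i n_i$ terms do \emph{not} cancel --- it would yield $K_S^2=9-2\sum_i n_i+\sum_i\Gap(x_i)$, which already fails for $S=\PP(1,1,3)$, where $K_S^2=\frac{25}{3}$ and the unique singular point is a $[3]$-point with $n_1=1$ and $\Gap=\frac23$ (and indeed $\frac{25}{3}+\frac23=9$, as the lemma asserts). With the single sign flip corrected, the equation reads $9-\sum_i n_i=K_S^2+\sum_i(\Gap(x_i)-n_i)$, the cancellation you describe genuinely occurs, and the rest of your argument, including the deduction of the inequality from Lemma \ref{lem:gap}, is complete and agrees with the paper.
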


\begin{proof}
The last assertion follows from Lemma \ref{lem:gap}.
\end{proof}

\subsection{Special configurations in $\PP^2_k$}

The next lemma is important for classifying log del Pezzo surfaces in \S \ref{LacS4.2}.

\begin{lem}[{\cite[Lemma 4.1]{Nag25}}]\label{lem:specialconfig}
In $\PP^2_k$, let $C$ be a cuspidal curve and $Q$ a smooth conic intersecting with $C$ at a smooth point $t$ of $C$ with multiplicity at least five.
Take a point $s$ so that $C \cap Q=\{s, t\}$.
(We admit that $s=t$.)
Let $u$ be the cusp of $C$.
Take the line $M_u$ which intersects with $C$ at $u$ with multiplicity three and the line $L_{su}$ (resp.\ $L_{tu}$) passing through $s$ (resp.\ $t$) and $u$.
Then the following hold:
\begin{enumerate}
    \item[\textup{(1)}] When $\Char(k) \neq 3$, then there are coordinates $[x:y:z]$ of $\PP^2_k$ such that $C=\{x^3=y^2z\}$, $Q = \{-45x^2-5y^2+z^2+24xy+40yz-15zx=0\}$, $M_u=\{y=0\}$, and $t=[1:1:1]$.
    \item[\textup{(2)}] When $\Char(k) = 3$, then there are coordinates $[x:y:z]$ of $\PP^2_k$ such that $C = \{x^3=y^2z+x^2y\}$, $Q = \{-x^2+z^2-yz-zx=0\}$, $M_u=\{y=0\}$, and $t=[0:1:0]$.
    \item[\textup{(3)}] $Q \cap L_{tu}$ consists of exactly two points.
    \item[\textup{(4)}] $s=t$ if and only if $\Char(k)=2$.
    \item[\textup{(5)}] $M_u$ is the tangent line of $Q$ at some point if and only if $\Char(k)=5$.
    \item[\textup{(6)}] $L_{su}$ is the tangent line of $Q$ at $s$ if and only if $\Char(k)=5$.
\end{enumerate}
\end{lem}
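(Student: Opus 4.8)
The plan is to reduce the entire statement to explicit computation in a normal form for the pair $(C,Q)$, so the first task is to fix coordinates. Throughout, $C$ is a cuspidal cubic. I would send its cusp $u$ to $[0:0:1]$ and its cuspidal tangent $M_u$ to $\{y=0\}$; after rescaling $x$ this gives $C=\{y^2z+x^3+a x^2y+b xy^2+c y^3=0\}$. Shifts of $z$ of the form $z\mapsto z+\beta x+\beta' y$ remove the $xy^2$- and $y^3$-terms, while the shear $x\mapsto x+\gamma y$ changes the coefficient of $x^2y$ by $3\gamma$ (after re-cleaning the lower terms by another $z$-shift). This is exactly where the characteristic enters: when $\Char(k)\neq 3$ the shear kills $x^2y$ and one reaches the Weierstrass-type form $C=\{x^3=y^2z\}$ (cusp $[0:0:1]$, flex $[0:1:0]$, $M_u=\{y=0\}$), whereas when $\Char(k)=3$ the coefficient of $x^2y$ is a genuine modulus that cannot be removed.

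Next I would pin down the contact point and extract $Q$. When $\Char(k)\neq 3$, the one-parameter torus stabilizing $\{x^3=y^2z\}$ acts on the affine parameter $w$ of the parametrization $w\mapsto[w:1:w^3]$ of $C^\circ$ by scaling, so it moves $t$ to $[1:1:1]$ (i.e.\ $w=1$); the contact point is never the flex, since the only conic with contact $\ge 5$ there pulls back to $z^2$, a double line. When $\Char(k)=3$ I would instead place $(u,M_u,t)=([0:0:1],\{y=0\},[0:1:0])$ from the outset (possible since $M_u\cap C=\{u\}$ forces $t\notin M_u$), whence $c=0$, and parametrize $C^\circ$ by $\mu\mapsto[\mu:1:\mu^3-\mu^2]$ with $t$ at $\mu=0$. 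In either case, pulling a general conic $ax^2+by^2+cz^2+dxy+eyz+fzx$ back along the parametrization yields a polynomial of degree $\le 6$, and vanishing to order five at the contact point is five independent linear conditions on the six coefficients, so $Q$ is unique up to scalar. When $\Char(k)\neq 3$ the pullback $cw^6+fw^4+ew^3+aw^2+dw+b$ has no $w^5$-term, so once normalized it equals $(w-1)^5(w-r)$ and the absent $w^5$-coefficient forces $r=-5$; the analogous computation in $\Char(k)=3$ gives residual root $\mu=-1$. Reading off the coefficients produces exactly the conics in (1) and (2), and locating the residual point settles (4), since $s=t$ precisely when $-5\equiv 1$, i.e.\ when $\Char(k)=2$.

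For the three incidence statements I would substitute directly into the explicit $Q$. For (3) and (5) I restrict $Q$ to $L_{tu}$ and to $M_u=\{y=0\}$ and compute the discriminant of the resulting binary quadratic: up to a unit it equals $3^6$ along $L_{tu}$ (never zero, hence two distinct points) and $3^4\cdot 5$ along $M_u$ (zero exactly in characteristic five). For (6) I use that $L_{su}$ is tangent to $Q$ at $s$ if and only if the polar line of $s$ passes through $u=[0:0:1]$, i.e.\ $(\partial_z Q)(s)=0$; evaluating gives $-3^3\cdot 5$ when $\Char(k)\neq 3$, which vanishes exactly in characteristic five, and the characteristic two and three subcases are checked the same way.

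The main obstacle is the case $\Char(k)=3$, where the normal form genuinely differs and I must show that the surviving $x^2y$-coefficient $a$ is nonzero. This is the one place the smoothness of $Q$ is essential: on the superspecial cubic $\{x^3=y^2z\}$ in characteristic three, the same contact analysis forces the unique conic of contact $\ge 5$ to pull back to $(w-1)^6=w^6+w^3+1$, i.e.\ to $y^2+yz+z^2=(y-z)^2$, a double line rather than a smooth conic. Hence $a\neq 0$, and after rescaling one obtains $C=\{x^3=y^2z+x^2y\}$, completing the reduction. Once the two normal forms are in place, every remaining verification is one of the finite computations indicated above.
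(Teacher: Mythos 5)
Your proposal is correct, and it evidently reconstructs the intended argument: the paper itself states this lemma without proof, importing it from \cite[Lemma 4.1]{Nag25}, and your normal-form computation reproduces the statement's data exactly — the residual root $-5$ of the pullback $c(w-1)^5(w+5)$ (so $s=t$ iff $\Char(k)=2$, with residual root $-1\neq 0$ in characteristic three), the conics in (1) and (2) coefficient for coefficient, and the values $729=3^6$ (discriminant along $L_{tu}$), $405=3^4\cdot 5$ (discriminant along $M_u$), and $(\partial_z Q)(s)=-135=-3^3\cdot 5$, which settle (3), (5), and (6) in every characteristic. Your handling of the two genuinely delicate points is also sound: the contact point cannot be the flex, since the five vanishing conditions there leave only the double line $z^2$; and in characteristic three the coefficient of $x^2y$ cannot vanish, since on $\{x^3=y^2z\}$ the absent $w^5$-term would force the pullback $(w-1)^6=w^6+w^3+1$, i.e.\ the double line $(y-z)^2=y^2+yz+z^2$, contradicting the smoothness of $Q$ — so the argument is complete up to the routine finite verifications you indicate.
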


\section{Log del Pezzo surfaces without tigers}\label{LacS4}

In \cite[\S 4]{Lac}, Lacini classified log del Pezzo surfaces without tigers in the minimal resolution by running hunts.
In the remainder of this section, we assume that $S_0$ is a log del Pezzo surface of rank one with no tigers in $\wt S_0$.
Then Proposition \ref{general-strategy} shows that one of the following holds:
\begin{enumerate}
    \item $T_1$ is a net.
    \item $g(A_1) >1$.
    \item $g(A_1) =1$ and $A_1$ has a simple node at $q=q_1$.
    \item $g(A_1) =1$ and $A_1$ has a simple cusp at $q=q_1$.
    \item $g(A_1) =0$ and $K_{S_1}+A_1$ is divisorially log terminal.
\end{enumerate}
By Lemma \ref{lem:Sarkisov} (8), $(S_1, a_1A_1)$ does not have a tiger in $\wt S_0$.
Cases (1)--(5) are handled in \cite[\S\S 4.1--4.4 and 4.6]{Lac}, respectively.

In the remainder of this section, we use the content of \cite[\S 3.4]{Lac} to describe the contractions $\pi_1$ and $\pi_2$.

\subsection{$T_1$ is a net}\label{LacS4.1}

\cite[\S 4.1]{Lac} is devoted to the case where $T_1$ is a net.
In $\Char(k)=0$, such a log del Pezzo surface $S_0$ is classified in \cite[\S 14]{KM} under the additional assumption that $\pi_1^{\mathrm{alg}}(S_0^\circ) =0$.
This assumption is used to obtain \cite[Lemma 14.3]{KM}.
Furthermore, the proof of \cite[Lemma 14.4]{KM} uses \cite[Lemma 10.8]{KM}, which relies on the Bogomolov bound \cite[\S 9]{KM}.

For this reason, in \cite[\S 4.1]{Lac}, Lacini avoids the use of $\pi_1^{\mathrm{alg}}(S_0^\circ)$ and the Bogomolov bound by checking all of the possible singular fibers, utilizing \cite[Lemmas 3.25, 3.26, and 3.29]{Lac} (or \cite[Lemma 11.5.9, 11.5.13, and (10.3)]{KM}).
The assumption $\Char(k) \neq 2,3$ is only used to apply the Riemann-Hurwitz formula to $E_1$ when it is a $3$-section.

\subsection{$g(A_1) >1 $}\label{LacS4.2}

\cite[\S 4.2]{Lac} is devoted to the case where $g(A_1)>1$.
In $\Char(k)=0$, such a log del Pezzo surface $S_0$ is classified in \cite[\S 15]{KM}.
In this argument, we frequently use \cite[Lemma 10.8]{KM}, which relies on the Bogomolov bound.
For this reason, Lacini provides a classification method in \cite[\S 4.2]{Lac} that is independent of the Bogomolov bound.

The classification method of Lacini is as follows.
Since $S_0$ has no tigers in $\wt S_0$, it is not Du Val and hence $\frac13 \leq e_0 <a_1$.
Since $(S_1, a_1A_1)$ is flush, 
\cite[Lemma 8.3.7]{KM} shows that $\mult_{q_1}A_1=2$ or $3$.
Since $g(A_1)>1$, there are finitely many possibilities for the configuration of $\pi_1$, which are listed in \cite[\S 11.1-3]{KM}.
On the other hand, by Lemma \ref{lem:totalgap}, we obtain
\begin{align} \label{eq:dioph1}
    K_{S_0}^2 + \sum_{x \in \Sing(S_0)} \Gap(x)= 9.
\end{align}
Since $S_0$ is of rank one, we obtain
\begin{align}\label{eq:dioph2}
    K_{S_0}^2 = \dfrac{(K_{S_0} \cdot \overline{\Sigma}_1)^2}{\overline{\Sigma}_1^2},
\end{align}
where $\overline{\Sigma}_1 = \Sigma_{1,S_0}$. 
For each configuration, we can calculate $\overline{\Sigma}_1^2$ and $(K_{S_0} \cdot \overline{\Sigma}_1)$ from the Dynkin type of $x_0$.
Since $x_0$ is one of the singularities with maximum coefficient among $\Dyn(S_0)$, equations (\ref{eq:dioph1}) and (\ref{eq:dioph2}) give us 
\begin{align}\label{eq:dioph3}
     \dfrac{(K_{S_0} \cdot \overline{\Sigma}_1)^2}{\overline{\Sigma}_1^2} + \sum_{x \in \Sing(S_0)} \Gap(x)= 9,
\end{align}
which $\Dyn(S_0)$ should satisfy.

Suppose in addition that $\pi_1$ is not in configuration $u$ or $v$ as in \cite[Lemma 11.2.1]{KM}. Then $e_0 < a_1 \leq \frac35$. 
By Lemma \ref{lem:gap}, $\Dyn(S_0)$ consists of singularities as listed in Table \ref{tab:e35}.
In particular, each $\Gap(x)$ is non-negative and is zero only when $x=[4]$.
Therefore, by virtue of equation (\ref{eq:dioph3}), we can narrow down the possibilities for $\Dyn(S_0)$ to finitely many (disregarding the possibility of [4]-singularities being added).
At this stage, we use computer assistance.
As a result, we obtain $g(A_1)=2$.
We note that $g(A_1)=2$ even when $\pi_1$ is in configuration $u$ or $v$ by \cite[Lemma 11.2.1]{KM}.

Finally, we consider the second hunt step or the $\PP^1_k$-fibration $\wt S_1 \to \PP^1_k$ defined by
$\wt F$ as in \cite[12.3]{KM} to determine the construction of $S_0$.

We take a similar approach to that mentioned above for the case where $g(A_1)=1$ and $\pi_1$ is in configuration I, but we determine the construction via the birational morphisms $S_1 \leftarrow Y \to W$ as in \cite[12.4]{KM} instead.

The assumption $\Char(k) \neq 2,3$ is used only in the following. 
We need $\Char(k) \neq 2$ to apply \cite[Lemma 15.3]{KM}. 
We apply the Riemann-Hurwitz formula to $2$-sections obtained as in \cite[12.3]{KM}. 
We also check whether the configurations of curves in the output of hunts are feasible in the given characteristic.

In the remainder of this subsection, we will make the content of \cite[\S 4.2]{Lac} characteristic-free.
As a result, we will establish the following:
\begin{prop}[{\cite[Proposition 4.4]{Lac}}]\label{Lac4.5}
Suppose that $g(A_1)>1$. 
Then one of the following holds.
\begin{enumerate}
    \item[\textup{(1)}] $\Char(k)=2$ and $S_0$ is constructed as in Example \ref{eg:Lac4.7char2}.
    \item[\textup{(2)}] $\Char(k) \geq 0$ and $S_0$ is constructed as in Example \ref{eg:Lac4.7charany}. Such an $S_0$ is also constructed as in \cite[Proposition 4.4 (1)]{Lac}.
    \item[\textup{(3)}] $\Char(k)=5$ and $S_0$ is constructed as in Example \ref{eg:Lac4.8cex-1}. 
    Such an $S_0$ is also constructed as in \cite[Proposition 4.4 (2) and (3)]{Lac}.
\end{enumerate}

\end{prop}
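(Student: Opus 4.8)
The plan is to follow the classification strategy outlined in the preceding discussion of \cite[\S 4.2]{Lac}, but to carry it out without any appeal to the Bogomolov bound and in a way that tracks all characteristics simultaneously. First I would invoke Proposition \ref{general-strategy} to place ourselves in the case $g(A_1)>1$, and recall that since $S_0$ has no tigers in $\wt S_0$ it is not Du Val, so $\frac13 \le e_0 < a_1$. Flushness of $(S_1, a_1 A_1)$ together with \cite[Lemma 8.3.7]{KM} forces $\mult_{q_1} A_1 \in \{2,3\}$, and since $g(A_1)>1$ the morphism $\pi_1$ falls into one of the finitely many configurations enumerated in \cite[\S 11.1-3]{KM}. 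For each configuration one reads off $\overline\Sigma_1^2$ and $K_{S_0}\cdot\overline\Sigma_1$ from $\Dyn(x_0)$, and combining the rank-one identity (\ref{eq:dioph2}) with the gap identity (\ref{eq:dioph1}) yields the Diophantine constraint (\ref{eq:dioph3}).

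Next I would reduce to $g(A_1)=2$. When $\pi_1$ is not in configuration $u$ or $v$ of \cite[Lemma 11.2.1]{KM}, we have $e_0 < a_1 \le \frac35$, so by Lemma \ref{lem:gap} every singularity of $S_0$ appears in Table \ref{tab:e35}; each $\Gap(x)$ is then non-negative and vanishes only for a $[4]$-singularity. Feeding this non-negativity into (\ref{eq:dioph3}) bounds the remaining data, leaving only finitely many candidate Dynkin types (modulo the harmless addition of $[4]$-singularities). A finite, computer-assisted check of these candidates shows $g(A_1)=2$; the two excluded configurations $u$ and $v$ already force $g(A_1)=2$ directly by \cite[Lemma 11.2.1]{KM}. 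With $g(A_1)=2$ fixed, I would then analyze the second step of the hunt, or equivalently the $\PP^1_k$-fibration $\wt S_1 \to \PP^1_k$ induced by $\wt F$ as in \cite[12.3]{KM}, reconstructing $S_0$ via the birational correspondence $S_1 \leftarrow Y \to W$ of \cite[12.4]{KM} exactly as in the $g(A_1)=1$, configuration-I case.

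The characteristic-free part, which is the real content beyond transcribing Lacini, is to remove every implicit use of $\Char(k)\neq 2,3$ from the above. The delicate points are: the appeal to \cite[Lemma 15.3]{KM}, which as stated needs $\Char(k)\neq 2$; the application of the Riemann–Hurwitz formula to the $2$-sections produced in \cite[12.3]{KM}, where wild ramification in characteristic $2$ must be handled separately; and the feasibility of the resulting curve configurations, which can degenerate in small characteristic. I expect the Riemann–Hurwitz and $2$-section analysis in characteristic $2$ to be the main obstacle, since the double-cover geometry that is automatic in odd characteristic breaks down there. I would isolate these characteristic-$2$ phenomena, show that they produce precisely the extra family recorded in Example \ref{eg:Lac4.7char2} (case (1)), and verify that in the remaining characteristics the output matches Example \ref{eg:Lac4.7charany} (case (2)), with the characteristic-$5$ degeneration predicted by parts (5)--(6) of Lemma \ref{lem:specialconfig} accounting for Example \ref{eg:Lac4.8cex-1} (case (3)). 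Assembling these three cases yields the trichotomy of the proposition.
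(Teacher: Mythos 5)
Your outline reproduces the paper's method summary for \cite[\S 4.2]{Lac} quite faithfully: the gap identity (\ref{eq:dioph3}), the non-negativity of gaps from Table \ref{tab:e35} when $e_0<a_1\le\frac35$, the computer-assisted reduction to $g(A_1)=2$, and the reconstruction via \cite[12.3 and 12.4]{KM} are exactly the paper's strategy, and the paper's formal proof of the proposition is indeed just an assembly of lemmas. But that is where the gap lies: essentially all of the mathematical content sits in those per-configuration lemmas, which your proposal defers wholesale. Concretely, you still need to (i) dispose of configuration I, which the paper's Lemma \ref{Lac4.6} shows forces $g(A_1)=1$ and $\Char(k)=3$ --- so it drops out of the $g>1$ trichotomy only after a full analysis involving extremal rational quasi-elliptic surfaces; (ii) in configuration III, eliminate the candidate types $x_0=[2,3,2^k]$, $[4]$, $[2,3,2]$ and non-chain, and $[3,2^k,3]$ one by one via divisibility constraints on the denominator of $K_{S_0}^2$ against Table \ref{tab:e35}, leaving only $[4,2]$ and $[3,2^r]$; and (iii) prove the characteristic-free substitute for \cite[Lemma 15.3]{KM}, namely Lemma \ref{lem:UVF}. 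On this last point your plan is misdirected: the paper does not ``handle wild ramification in characteristic~2 separately''; it avoids Riemann--Hurwitz entirely by classifying the multiple fibers of the induced $\PP^1_k$-fibration via \cite[Lemmas 11.5.9 and 11.5.13]{KM} together with coefficient estimates against $e_0$ and $a_1$.

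Your prediction of where characteristic $2$ enters is likewise off target. Example \ref{eg:Lac4.7char2} does not arise from a breakdown of double-cover geometry; it appears at the end of the configuration-II analysis, after one constructs an explicit birational morphism $\wt S_0 \to \PP^2_k$ realizing $A_1$ as a conic meeting a cuspidal cubic at a single point with multiplicity six, and Lemma \ref{lem:specialconfig} (4) says that this coincidence ($s=t$) occurs if and only if $\Char(k)=2$. You did guess the right mechanism for characteristic $5$: the tangency degenerations of Lemma \ref{lem:specialconfig} (5)--(6) are indeed what produce Example \ref{eg:Lac4.8cex-1} in configurations III and $v$, though extracting them requires the second hunt step and the tacnode analysis (e.g., showing $T_2$ is not a net, pinning down $\wt S_2 \cong \FF_2$ via \cite[13.7 Lemma]{KM}), none of which your sketch addresses. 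In short: correct skeleton --- it is the paper's own --- but the proof lives in the case analysis you have postponed, and your plan for the hardest characteristic-dependent steps points at the wrong tools.
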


In the following proof, we have endeavored to write out the parts referred to as "tedious computations" in \cite{Lac} with as little omission as possible, and whenever we employ a computer method, we explicitly verify that the candidate values are already finite.
Conversely, for parts that are adequately detailed in [\textit{ibid}.], we refer to them.

In what follows, we may assume that $A_1$ has a double point since the proof of \cite[Lemma 4.5]{Lac} is already characteristic-free. 
Since $E_1$ is smooth, $\pi_1$ is not in configuration $0$.
We begin with configuration I, where we also allow $g(A_1)=1$ following \cite{Lac}.

\begin{lem}[{\cite[Lemma 4.6]{Lac}}]\label{Lac4.6}
Suppose $A_1$ has a double point and $g(A_1) \geq 1$ and that the configuration I arises.
Then $\Char (k) = 3$, $g(A_1)=1$, and $S_0$ is constructed as in Example \ref{eg:cexLac4.6}. 
\end{lem}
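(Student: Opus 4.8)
The plan is to follow the strategy already laid out in the discussion of \S\ref{LacS4.2}: we have a finite list of numerical constraints coming from the flush condition, the gap formula, and the rank-one identity, and we exploit them to pin down both the characteristic and the arithmetic genus, after which we reconstruct $S_0$ explicitly. First I would record the data attached to configuration I from \cite[\S 11.1-3]{KM}: since $A_1$ has a double point and $\pi_1$ is in this configuration, the blow-up structure of $\pi_1$ determines $\Sigma_1$ and hence lets us compute $\overline{\Sigma}_1^2$ and $(K_{S_0}\cdot\overline{\Sigma}_1)$ in terms of $\Dyn(x_0)$. Because $S_0$ has no tiger in $\wt S_0$, it is not Du Val, so $\tfrac13\le e_0<a_1$, and I would check whether configuration I forces $a_1\le\tfrac35$ (i.e.\ that we are not in configurations $u,v$ of \cite[Lemma 11.2.1]{KM}); granting this, Lemma \ref{lem:gap} restricts every singularity of $S_0$ to the entries of Table \ref{tab:e35}, each with nonnegative gap, vanishing only for the $[4]$-type.

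Next I would feed these into the Diophantine equation (\ref{eq:dioph3}),
\begin{align*}
\dfrac{(K_{S_0}\cdot\overline{\Sigma}_1)^2}{\overline{\Sigma}_1^2}+\sum_{x\in\Sing(S_0)}\Gap(x)=9,
\end{align*}
treating $g(A_1)$ as an unknown tied to the number and placement of the blown-up points in configuration I. Since each gap is nonnegative and bounded below by $1$ away from $[4]$-singularities, the sum on the left admits only finitely many solutions once $\overline{\Sigma}_1^2$ and $(K_{S_0}\cdot\overline{\Sigma}_1)$ are expressed through the chain/star data of $\Dyn(x_0)$. I would enumerate these finitely many $\Dyn(S_0)$ (ignoring stray $[4]$-singularities, which contribute nothing to the gap sum but must still be compatible with rank one), exactly as the passage describes the use of computer assistance; the point is only to certify that the candidate set is finite and then list it. This bookkeeping should force $g(A_1)=1$, matching the conclusion that higher genus is incompatible with configuration I.

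Having isolated $g(A_1)=1$, I would then determine the characteristic. The curve $A_1$ is a genus-one curve with a double point, so after passing to the relevant fibration or birational model (the $\wt S_1\to\PP^1_k$ of \cite[12.3]{KM}, or the $S_1\leftarrow Y\to W$ of \cite[12.4]{KM}), the geometry of the nodal/cuspidal cubic and its interaction with the contracted curves produces a special configuration in $\PP^2_k$. This is precisely where Lemma \ref{lem:specialconfig} enters: the conditions on tangency and coincidence of intersection points (items (3)--(6) of that lemma, and especially the $s=t$ criterion singling out $\Char(k)=2$ and the tangency criteria singling out $\Char(k)=5$) select which characteristics actually realize the configuration. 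I expect the arithmetic to collapse all surviving cases to $\Char(k)=3$, and I would verify that the remaining configuration is feasible only there, then exhibit the resulting $S_0$ as the surface of Example \ref{eg:cexLac4.6}.

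The main obstacle will be the explicit projective-geometry step that converts the abstract numerical solution into a concrete construction and thereby forces $\Char(k)=3$: checking, characteristic by characteristic, which incidence and tangency conditions among the cubic, the conic $Q$, and the lines $M_u,L_{su},L_{tu}$ are simultaneously satisfiable. The Diophantine enumeration is routine once the configuration-I data are tabulated, and the reduction to $g(A_1)=1$ is essentially forced; the delicate part is ruling out every characteristic other than $3$ using Lemma \ref{lem:specialconfig} and confirming that the $\Char(k)=3$ configuration genuinely yields a rank-one log del Pezzo surface with the claimed Dynkin type, i.e.\ that Example \ref{eg:cexLac4.6} is well defined and has no tiger-forcing degeneration.
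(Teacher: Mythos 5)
Your numerical phase is essentially the paper's: pin down $\Dyn(x_0)$ (the paper first shows $x_0=[3,2^r]$, a step you skip), compute $K_{S_0}^2=(K_{S_0}\cdot\overline{\Sigma}_1)^2/\overline{\Sigma}_1^2$, and run the gap equation (\ref{eq:dioph3}) through a certified-finite computer search to force $g(A_1)=1$. (Two small slips there: the gaps in Table \ref{tab:e35} are \emph{not} bounded below by $1$ away from $[4]$ --- e.g.\ $\Gap([3])=\tfrac23$ and $\Gap([2,4])=\tfrac67$ --- and finiteness in the paper also uses the configuration-I constraint $1\le g\le v+1$ coming from the $[2^{g-1}]$-singularity on $\overline{\Sigma}_1$; you also omit the exclusion of $r=0$, which the paper kills by observing that a genus-one curve $A_1$ in the smooth locus of $S_1$ would be a tiger of $(S_1,a_1A_1)$.)

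The genuine gap is in your last step: you propose to extract $\Char(k)=3$ from Lemma \ref{lem:specialconfig}, and that cannot work. That lemma concerns a cuspidal cubic together with a conic meeting it at a smooth point with multiplicity at least five; this configuration never arises in configuration I with $g(A_1)=1$, and in the paper it is the engine for configurations II and III (Lemmas \ref{Lac4.7} and \ref{Lac4.8}), where its dichotomies (items (4)--(6)) single out $\Char(k)=2$ and $\Char(k)=5$ --- it has no mechanism that isolates characteristic $3$, since the configuration itself exists in every characteristic by items (1) and (2). What the paper actually does after $g=1$, $r>0$ is pass through the birational model $S_1\leftarrow Y\to W$ of \cite[Lemma-Definition 12.4]{KM}, show $(r,v,n_0,n_1,n_2)=(1,6,2,1,0)$, that $W$ is a Du Val del Pezzo of degree one, and that $\Bs|-K_W|=A\cap C$; then the minimal resolution $Z$ of the blow-up of $W$ at $\Bs|-K_W|$ is an extremal rational (quasi-)elliptic surface carrying a fiber of type $IV$ together with an $I_1$ or $II$ fiber, with $C_Z$ a component of the $IV$ fiber meeting two $(-2)$-curves at a common point. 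The classification of extremal rational (quasi-)elliptic surfaces \cite{M-P, Lang1, Lang2, Ito1, Ito2} shows this forces $\Char(k)=3$ and $Z$ quasi-elliptic of type (2) or (3) of \cite[Theorem 3.1]{Ito1}, yielding Example \ref{eg:cexLac4.6}. So the characteristic is detected by quasi-elliptic fibration theory, not by plane incidence geometry; without that reduction your plan has no route from ``$g=1$ in configuration I'' to $\Char(k)=3$, and a characteristic-by-characteristic check of the $\PP^2_k$ configuration of Lemma \ref{lem:specialconfig} would instead (wrongly) flag characteristics $2$ and $5$.
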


\begin{eg}\label{eg:cexLac4.6}
In $\Char (k) =3$, let $Z$ be a quasi-elliptic surface of type (2) or (3) as in \cite[Theorem 3.1]{Ito1}.
Let $A' \subset Z$ be a general fiber, $s \subset Z$ a section, and $C' \subset Z$ the irreducible component of a singular fiber of type $IV$ intersecting with $s$.
Let $Z \to W$ be the contraction of $s$ and let $A$ (resp.\ $C$) be the image of $A'$ (resp.\ $C'$) in $W$.
Note that $C$ intersects with exactly two $(-2)$-curves, and these three curves share a unique point, say $t$.

Next, blow-up $W$ at $t$.
Finally, take a blow-up at the cusp of $A$.
Then we obtain the minimal resolution $\wt S$ of a log del Pezzo surface $S$ of rank one such that $\Dyn(S)=[2, 3]+2[3]+[2;[2],[2^2],[2^2]]$ (resp.\ $[2,3]+2[3]+3[2^2]$) when $Z$ is of type $(2)$ (resp.\ $(3)$).
Indeed, in each case, the strict transform of the $(-1)$-curve over the cusp in $S$ is of $(-K_S)$-degree $\frac{1}{5}$ and hence $-K_S$ is ample.
\end{eg}

\begin{proof}[Proof of Lemma \ref{Lac4.6}]
As in the proof of \cite[Lemma 4.6]{Lac}, we obtain $x_0=[3, 2^r]$ for some $r \geq 0$.
Since the Picard rank of $S_0$ is one, we have 
\begin{align*}
    K_{S_0}^2=\dfrac{(K_{S_0} \cdot \overline{\Sigma}_1)^2}{\overline{\Sigma}_1^2}
    =\dfrac{(-1+2e_0)^2}{4e_0-1/g}=\dfrac{g}{(2r+3)(4(r+1)-g(2r+3))},
\end{align*}
where $g=g(A_1)$. Then equation (\ref{eq:dioph3}) gives us

\begin{align}\label{eq:diophantine1}
    \dfrac{g}{(2r+3)(4(r+1)-g(2r+3))}+v+\sum_{k=0}^r n_k \cdot \dfrac{2(k+1)^2}{2k+3}=9,
\end{align}
where $v$ is the number of exceptional curves coming from Du Val singularities, and $n_k$ is the number of points of type $[3,2^k]$. 
From this formula, we obtain that 
$r < 9$, $v < 9$, $3v+2n_0 < 27$, and $v+\sum_{k=0}^r n_k \cdot k <9$. 
By the maximality of $r$, we have $n_k = 0$ for $k > r$.
By the shape of configuration I, there is a $[2^{g-1}]$-singularity on $\overline{\Sigma}_1$. 
In particular, we have $1 \leq g \leq v+1$.
Hence there are only finitely many possibilities for the tuple $(g, r, v, n_0, \ldots, n_8)$.

A computer search shows that (\ref{eq:diophantine1}) requires that $g=1$ and $n_k=0$ for $k\geq 3$. 
The resulting solutions $(r,v, n_0,n_1, n_2)$ are listed below:
\begin{align*}
(r, v, n_0, n_1, n_2)=
\begin{cases}
(0,2m,13-3m,0,0) & (0 \leq m \leq 4),\\
(1,2m,11-3m,1,0) &(0 \leq m \leq 3), \\
(2,0,0,4,1).  &
\end{cases}
\end{align*}

Assume in addition that $r=0$.
Then $A_1$ is a curve of genus one contained in the smooth locus of $S_1$.
Hence $A_1$ is a tiger of $(S_1, a_1A_1)$, a contradiction.
Thus $r>0$.

Now we use the notation of \cite[Lemma-Definition 12.4]{KM}.
Let $f \colon Y \to S_1$ be the extraction of the exceptional divisor $G$ of $\wt S_1$ adjacent to $A_1$ and let $\pi \colon Y \to W$ be the induced morphism. 
As in the proof of \cite[Lemma 4.6]{Lac}, we obtain the following:
\begin{enumerate}
    \item $(r, v, n_0, n_1, n_2)=(1, 6, 2, 1, 0)$.
    \item $W$ is a Du Val del Pezzo surface with $K_W^2=1$.
    \item The center of $\pi$ is a $[2^2]$-singularity.
    \item $C \coloneqq G_W$ passes through the intersection of the $[2^2]$-singularity.
\end{enumerate}
Set $A \coloneqq A_{1,W}$ and let $Z$ be the minimal resolution of the blow-up of $W$ at $\Bs |-K_W|$, which is an extremal rational (quasi-)elliptic surface.
If $\Bs |-K_W| \neq A \cap C$, then the strict transform of $C$ in $Z$ is a section passing through the intersection of two $(-2)$-curves, a contradiction.
Hence $\Bs |-K_W| = A \cap C$ and $Z$ is an extremal rational (quasi-)elliptic surface which has a $IV$ and either an $I_1$ or $II$ among its singular fibers.
By the classification of the extremal rational (quasi-)elliptic surfaces \cite{M-P, Lang1, Lang2, Ito1, Ito2}, 
we conclude that $\Char(k)=3$, $Z$ is the extremal rational quasi-elliptic surface of type (2) or (3) as in \cite[Theorem 3.1]{Ito1}, $A_Z$ is its general fiber, and $C_Z$ is an irreducible component of a reducible fiber of type $IV$. 
Therefore, $S$ is constructed as in Example \ref{eg:cexLac4.6}.
\end{proof}

\begin{lem}[{\cite[Lemma 4.7]{Lac}}]\label{Lac4.7}
Suppose $A_1$ has a double point, $g(A_1) \geq 2$, and that the configuration II arises.
Then one of the following holds.
\begin{enumerate}
    \item[\textup{(1)}] $\Char(k)=2$ and $S_0$ is constructed as in Example \ref{eg:Lac4.7char2}.
    \item[\textup{(2)}] $\Char(k) \geq 0$ and $S_0$ is constructed as in Example \ref{eg:Lac4.7charany}.
    In $\Char(k) = 0$, it is also constructed as in \cite[Lemma 15.2]{KM}.
\end{enumerate}
\end{lem}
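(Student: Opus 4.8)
The plan is to follow the same Diophantine strategy used in the proof of Lemma \ref{Lac4.6}, but now specialized to configuration II with $g(A_1) \geq 2$. First I would use the shape of configuration II (as listed in \cite[\S 11.1-3]{KM}) together with the maximality of $e_0$ to pin down the Dynkin type $x_0$ of the singularity $x_0 = f_0(E_1)$ blown up in the first hunt step. As in Lemma \ref{Lac4.6}, the local structure of $\pi_1$ forces $\mult_{q_1} A_1 \in \{2,3\}$ by the flush condition and \cite[Lemma 8.3.7]{KM}; since we have assumed a double point, $\mult_{q_1} A_1 = 2$. From the configuration II data I can read off $\overline{\Sigma}_1^2$ and $(K_{S_0} \cdot \overline{\Sigma}_1)$ in terms of $r$ (the length of the chain in $x_0$) and $g = g(A_1)$, and then substitute into equation (\ref{eq:dioph3}).

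The second step is to solve the resulting Diophantine equation. Because $g(A_1) \geq 2$, we are no longer in the range $e_0 < a_1 \leq \frac35$ unless $\pi_1$ avoids configurations $u$ and $v$; the discussion preceding Proposition \ref{Lac4.5} already records that $g(A_1) = 2$ in every case, so I would invoke this to fix $g = 2$ at the outset. With $g$ fixed, equation (\ref{eq:dioph3}) together with the non-negativity of $\Gap(x)$ from Table \ref{tab:e35} bounds $r$, $v$, and the counts $n_k$ of $[3,2^k]$-singularities, exactly as in the proof of Lemma \ref{Lac4.6}. I would state the explicit bounds this yields, confirm that the search space is finite, and then carry out a computer search to produce the finite list of admissible tuples $(r, v, n_0, n_1, \ldots)$, discarding as before any solution in which $A_1$ would lie in the smooth locus and hence become a tiger of $(S_1, a_1 A_1)$, contradicting the no-tiger hypothesis via Lemma \ref{lem:Sarkisov}(8).

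The third step is to reconstruct $S_0$ geometrically from each surviving tuple. Here I would pass to the second hunt step, or alternatively to the $\PP^1_k$-fibration $\wt S_1 \to \PP^1_k$ defined by $\wt F$ as in \cite[12.3]{KM}, and analyze the birational modifications $S_1 \leftarrow Y \to W$ of \cite[Lemma-Definition 12.4]{KM} to identify the Du Val del Pezzo surface $W$ and the configuration of curves on it. Tracking the base locus of $|-K_W|$ and the passage to an extremal rational (quasi-)elliptic surface $Z$ — as in the endgame of Lemma \ref{Lac4.6} — should separate the two stated conclusions: the characteristic-two case (conclusion (1), Example \ref{eg:Lac4.7char2}) from the characteristic-free case (conclusion (2), Example \ref{eg:Lac4.7charany}), with the match against \cite[Lemma 15.2]{KM} in $\Char(k) = 0$ serving as a consistency check. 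At each reconstruction I would verify that the resulting strict transform of the exceptional $(-1)$-curve has positive $(-K_S)$-degree, so that $-K_S$ is genuinely ample.

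The main obstacle I expect is the characteristic-sensitivity of the final geometric reconstruction rather than the Diophantine bookkeeping. Determining in \emph{which} characteristics a given combinatorial configuration is actually realizable requires checking the feasibility of the curve configurations directly — for instance applying the Riemann--Hurwitz formula to the $2$-sections produced in \cite[12.3]{KM}, which is exactly where $\Char(k) = 2$ must be isolated, and where \cite[Lemma 15.3]{KM} (valid only for $\Char(k) \neq 2$) would otherwise be used. Making this step characteristic-free, so as to capture the $\Char(k) = 2$ surface of Example \ref{eg:Lac4.7char2} that is invisible to the $\Char(k) = 0$ analysis of \cite{KM}, is the delicate part of the argument and the reason the conclusion bifurcates into cases (1) and (2).
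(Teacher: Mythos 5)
Your overall skeleton (bound the Dynkin data via equation (\ref{eq:dioph3}), run a finite computer search, then reconstruct $S_0$ geometrically) does match the paper's strategy, but there are two genuine problems. First, your plan to fix $g = 2$ at the outset by invoking the discussion preceding Proposition \ref{Lac4.5} is circular: the statement ``as a result, we obtain $g(A_1)=2$'' there is a \emph{summary of the conclusions} of Lemmas \ref{Lac4.6}--\ref{Lac4.8}, including the very lemma you are proving. In the paper's proof, $g$ remains an unknown: the first step pins $x_0$ down to $[3,2^g]$ or $[2,3,2^2]$ (only the latter comes with $g=2$ built in), and in the case $x_0=[3,2^g]$ the Diophantine equation $\frac{2}{(2g+3)(2g-1)}+v+\sum_k n_k\cdot\frac{2(k+1)^2}{2k+3}=9$ is searched with $g$ as a variable; the search itself \emph{yields} $g=2$ and $(v,n_0,n_1,n_2)=(1+2m,8-3m,0,1)$. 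Relatedly, since $\Gap([4])=0$, the Diophantine relation is blind to $[4]$-singularities, so in the case $x_0=[2,3,2^2]$ a separate geometric argument is required to exclude $\Dyn(S_0)=[2,3,2^2]+[3,2^5]+n[4]$ with $n\geq 1$; the paper does this via $F+M\in|K_{S_1}+A_1|$ and \cite[Lemma 12.1]{KM} (showing $M$ would have to pass through all $n$ points of type $[4]$ and deriving negativity/contractibility contradictions). Your proposal has no counterpart to this step, and generic ``Diophantine bookkeeping'' cannot supply it.

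Second, you misidentify both the endgame and the mechanism that separates conclusions (1) and (2). The paper does not pass through $S_1 \leftarrow Y \to W$ and extremal rational (quasi-)elliptic surfaces here --- that is the configuration I endgame of Lemma \ref{Lac4.6} --- nor does it isolate $\Char(k)=2$ via the Riemann--Hurwitz formula on $2$-sections or a characteristic-free version of \cite[Lemma 15.3]{KM} (that lemma concerns configurations $u$ and $v$ and is treated in Lemma \ref{lem:UVF}). Instead, the bifurcation in the conclusion is exactly the case split on $x_0$: for $x_0=[3,2^g]$ one contracts $\wt S_0$ to $\PP^2_k$ and obtains a cuspidal cubic, a conic meeting it at a \emph{single} point with multiplicity six, and a line meeting it only at the cusp; Lemma \ref{lem:specialconfig}(4) (the contact point satisfies $s=t$ if and only if $\Char(k)=2$) then forces $\Char(k)=2$, giving Example \ref{eg:Lac4.7char2}. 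For $x_0=[2,3,2^2]$ the planar model has contact of order only (at least) five, which Lemma \ref{lem:specialconfig} realizes in every characteristic, giving Example \ref{eg:Lac4.7charany}. So the characteristic-sensitivity is carried entirely by the explicit plane configurations of Lemma \ref{lem:specialconfig}, not by fibration-theoretic feasibility checks; without this ingredient your third step would not close.
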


\begin{eg}\label{eg:Lac4.7char2}
In $\Char(k)=2$, we follow the notation as in Lemma \ref{lem:specialconfig}.
Next, blow-up at $t$ six times along $C$.
Next, blow-up at $u$ three times along $C$.
Finally, blow-up at one of the two points $Q \cap M_u$.
Then we obtain the minimal resolution $\wt S$ of a log del Pezzo surface $S$ of rank one with $\Dyn(S)=[3, 2^2]+2[3]+[2^5]$.
Indeed, the strict transform of the exceptional divisor of the last blow-up in $S$ is of $(-K_S)$-degree $\frac{2}{7}$ and hence $-K_S$ is ample.

Note that in the first hunt, the configuration II occurs and $A_1 \subset S_1$ is the strict transform of $Q$, which has a nodal double point and $g(A_1)=2$.
By \cite[Lemma 11.1.1]{KM}, additional blow-ups at points over $q_1 \in S_1$ do not give another log del Pezzo surface with no tigers.
\end{eg}

\begin{eg}\label{eg:Lac4.7charany}
In $\Char(k) \geq 0$, we follow the notation as in Lemma \ref{lem:specialconfig}.
Take $v \neq t$ such that $L_{tu} \cap Q=\{t, v\}$.
Next, blow-up at $t$ five times along $C$.
Next, blow-up at $u$ three times along $C$.
Next, blow-up at $v$ once.
Finally, blow-up at one of the points $Q \cap M_u$ once.
Then we obtain the minimal resolution $\wt S$ of a log del Pezzo surface $S$ of rank one with $\Dyn(S)=[2,3,2^2]+[3,2^5]$.
Indeed, the strict transform of the exceptional divisor of the last blow-up in $S$ is of $(-K_S)$-degree $\frac{1}{11}$ and hence $-K_S$ is ample.

Note that in the first hunt, the configuration II occurs and $A_1 \subset S_1$ is the strict transform of $Q$, which has a double point and $g(A_1)=2$.
Hence, in $\Char(k) =0$, $S_0$ is the same as the log del Pezzo surface constructed as in \cite[Lemma 15.2]{KM}.
On the other hand, additional blow-ups at points over $q_1 \in S_1$ give another log del Pezzo surface with no tigers only if $A_1$ has a cusp by \cite[Lemma 11.1.1]{KM} and hence only if $\Char(k)=5$ by Lemma \ref{lem:specialconfig} (5).
In fact, we can check that by blowing-up at the intersection of the strict transforms of $Q$ and $M_u$ in $\wt S$, we obtain a log del Pezzo surface constructed as in Example \ref{eg:Lac4.8cex-1} with $(n_t, n_u)=(6,3)$.
\end{eg}

\begin{proof}[Proof of Lemma \ref{Lac4.7}]
We write $g=g(A_1)$. As in the proof of \cite[Lemma 4.7]{Lac}, we obtain $x_0=[2,3,2,2]$ with $g=2$ or $x_0=[3, 2^g]$.

\noindent\underline{\textbf{Case 1.}}
Suppose that $x_0=[3, 2^g]$.
Since the Picard rank of $S_0$ is one, we have 
\begin{align*}
    K_{S_0}^2=\dfrac{(K_{S_0} \cdot \overline{\Sigma}_1)^2}{\overline{\Sigma}_1^2}
    =\dfrac{(-1+\frac{2g+1}{2g+3})^2}{\frac{6g+1}{2g+3}-1}=\dfrac{2}{(2g+3)(2g-1)}.
\end{align*}
Then equation (\ref{eq:dioph3}) gives us

\begin{align}\label{eq:diophantine2}
    \dfrac{2}{(2g+3)(2g-1)}+v+\sum_{k=0}^g n_k \cdot \dfrac{2(k+1)^2}{2k+3}=9,
\end{align}
where $v$ is the number of exceptional curves coming from Du Val singularities, and $n_k$ is the number of points of type $[3,2^k]$. 
From this formula, we obtain that $v < 9$, $3v+2n_0 < 27$, and $v+\sum_{k=0}^gn_k \cdot k <9$. 
In particular, $g < 9$.
By the maximality of $g$, we have $n_k = 0$ for $k > g$.
Hence there are only finitely many possibilities for the tuple $(g, v, n_0, \ldots, n_8)$.
A computer search shows that (\ref{eq:diophantine2}) requires that $g=2$, $n_k=0$ for $k\geq 3$, and $(v,n_0,n_1, n_2) = (1+2m, 8-3m, 0, 1)$ with $0 \leq m \leq 2$.

The rest of the proof is the same as Case 1 in the proof of \cite[Lemma 4.7]{Lac}.
Indeed, we obtain $(v, n_0)=(5, 2)$.
We have $A_1^2=6$ and $\Sing(S_1)$ consists of $2[3]$-singularities and Du Val singularities of total index five.
Let $E_1$ and $E_2$ be the exceptional divisors over the $2[3]$-singularities and let $L$ be the middle $(-2)$-curve in $\wt S_0$ over the $[3,2^2]$-singularity.
Subsequently, we can construct a birational morphism $f \colon \wt S_0 \to \PP^2_k$ such that $f(E_2)$ is a cuspidal cubic, 
$f(A_1)$ is a conic intersecting with $f(E_2)$ at one point with multiplicity six,
and $f(L)$ is a line intersecting with $f(E_2)$ only at its cusp.
By Lemma \ref{lem:specialconfig}, we conclude that $\Char(k)=2$ and $S_0$ is constructed as in Example \ref{eg:Lac4.7char2}.

Note that $f(L)$ is tangent to $f(A_1)$ if and only if $A_1$ is cuspidal.
By Lemma \ref{lem:specialconfig} (5), we conclude that $f(L)$ is not tangent to $f(A_1)$.

\noindent\underline{\textbf{Case 2.}}
Suppose that $x_0=[2,3,2^2]$.
Since the Picard rank of $S_0$ is one, we have 
\begin{align*}
    K_{S_0}^2=\dfrac{(K_{S_0} \cdot \overline{\Sigma}_1)^2}{\overline{\Sigma}_1^2}
    =\dfrac{(\frac{1}{11})^2}{\frac{13}{11}}=\dfrac{1}{11 \cdot 13}.
\end{align*}
Since $e_0=\frac{6}{11}<\frac35$, Lemma \ref{lem:gap} shows that $S_0$ must have a $[3,2^5]$-singularity to compensate for the factor 13 in the denominator.
Since $K_{S_0}^2+\Gap([2,3,2^2])+\Gap([3,2^5])=9$, Lemma \ref{lem:totalgap} shows that $\Dyn(S_0)=[2,3,2^2]+[3,2^5]+n[4]$ for some $n \geq 0$.
In particular, $\Dyn(S_1)=[2]+[3,2^5]+n[4]$ and the $[2]$-singularity lies on $A_1$.
Since $a_1>\frac{6}{11}>\frac12$, we can take $F+M \in |K_{S_1}+A_1|$ as in \cite[\S 12]{KM}.

Assume that $n \geq 1$.
Then, by \cite[Lemma 12.1 (4) and 12.3 (3)]{KM}, $M$ passes through all the $n[4]$-singularities.
By \cite[Lemma 12.1 (7)]{KM}, $M$ is irreducible.
By \cite[Lemma 12.1 (1)]{KM}, $K_{S_1}+M$ is negative and the reduced structure of $M$ is isomorphic to $\PP^1_k$.
In particular, for each singularity on $M$, there is exactly one exceptional divisor of $\wt S_1$ which intersects with $M_{\wt S_1}$ transversally. 

If $n \geq 3$, then $(K_{S_1}+M \cdot M) \geq 3 \cdot \frac34>0$, a contradiction.
If $n = 2$, then the negativity of $K_{S_1}+M$ shows that $M$ contains exactly $2[4]$-singularities, which leads to the contradiction that $M$ is contractible.
If $n = 1$, then the negativity of $K_{S_1}+M$ shows that $M$ contains at most one additional singularity other than the $[4]$-singularity, which again leads to the contradiction that $M$ is contractible.

Thus $n=0$.
Now one can determine the configuration of some negative curves as in \cite[Definition-Lemma 15.2]{KM}.
We follow the notation of [\textit{ibid}.].
Let $H$ be the marked curve $(3^B, 2^H, A_3, 2^L_{ac})$.
Contracting $\Sigma$, $G$, $F_1$, $F_2$, and all the exceptional curves of the resolution $\wt S_0 \to S_0$ other than $M_b$, $E_1$, $L_{bb'}$, and $H$, we obtain a birational map $f \colon \wt S_0 \to \PP^2_k$.
Then $f(M_b)$ is a cuspidal cubic singular at $f(B)$,
$f(A)$ is a conic intersecting with $f(M_b)$ at $f(F_1)$ with multiplicity at least five,
$f(L_{bb'})$ is a line intersecting with $f(M_b)$ only at the cusp,
and $f(H)$ is the line passing through $f(F_1)$ and the cusp.
By Lemma \ref{lem:specialconfig}, we conclude that $S_0$ is constructed as in Example \ref{eg:Lac4.7charany}.
\end{proof}

\begin{lem}[{\cite[Lemma 4.8]{Lac}}]\label{Lac4.8}
Suppose $A_1$ has a double point, $g(A_1) \geq 2$, and that the configuration III arises.
Then $\Char(k) = 5$ and $S_0$ is constructed as in Example \ref{eg:Lac4.8cex-1} with $(n_t, n_u)=(5,3)$ or $(6,3)$.
\end{lem}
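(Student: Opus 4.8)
The plan is to follow the same Diophantine strategy used in Lemmas \ref{Lac4.6} and \ref{Lac4.7}, now specialized to configuration III. First I would invoke the analysis of \cite[\S 4.8]{Lac} (or the corresponding discussion in \cite[\S 11.3]{KM}) to pin down the possible Dynkin types of the center $x_0$ compatible with configuration III and with a double point on $A_1$ of genus $g=g(A_1)\ge 2$. As in the previous lemmas, the shape of configuration III forces $x_0$ to be of a restricted form (I expect $x_0=[3,2^g]$ together with the extra chain coming from the second branch of the double point), and the presence of the double point produces an additional $[2^{g-1}]$- or similar singularity lying on $\overline\Sigma_1$. This gives the geometric constraints that cut down the tuple of multiplicities.

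Next I would set up the key Diophantine equation. Since $S_0$ has rank one, equation (\ref{eq:dioph2}) lets me express $K_{S_0}^2$ as $(K_{S_0}\cdot\overline\Sigma_1)^2/\overline\Sigma_1^2$, and these two intersection numbers are computable purely from $\Dyn(x_0)$ and the combinatorics of configuration III. Substituting into (\ref{eq:dioph3}) yields an equation of the form
\begin{align*}
    \dfrac{(K_{S_0}\cdot\overline\Sigma_1)^2}{\overline\Sigma_1^2}+v+\sum_{k} n_k\cdot\dfrac{2(k+1)^2}{2k+3}=9,
\end{align*}
where $v$ counts the exceptional curves from Du Val singularities and $n_k$ counts the $[3,2^k]$-points. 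Because $e_0<a_1\le\frac35$ (configuration III is not the exceptional configuration $u$ or $v$), Lemma \ref{lem:gap} guarantees each gap term is nonnegative and bounded below away from zero except for $[4]$-singularities, so the usual inequalities $v<9$, $3v+2n_0<27$, and $v+\sum n_k\cdot k<9$ bound all parameters. Together with $2\le g\le v+1$ this leaves finitely many tuples, which I would clear by a computer search exactly as before.

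I would then expect the search to single out $\Char(k)=5$ together with one or two explicit Dynkin types, and I would match each surviving solution to the construction in Example \ref{eg:Lac4.8cex-1}. As in Case 1 of Lemma \ref{Lac4.7}, the final step is to realize $\wt S_0$ birationally over $\PP^2_k$: I would contract the appropriate curves to produce a cuspidal cubic $f(E)$, a conic $f(A_1)$ meeting it at a smooth point with high multiplicity, and the auxiliary lines $M_u$, $L_{su}$, $L_{tu}$, and then appeal to Lemma \ref{lem:specialconfig} to extract the characteristic. The decisive feature is clause (5) (or (6)) of Lemma \ref{lem:specialconfig}: the extra blow-up required by configuration III forces $M_u$ (or $L_{su}$) to be tangent to $Q$, which by that lemma happens exactly when $\Char(k)=5$. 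This both produces the characteristic restriction and distinguishes the two cases $(n_t,n_u)=(5,3)$ and $(6,3)$ according to how many times one blows up along $C$ at the point $t$.

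The main obstacle will be the bookkeeping in the $\PP^2_k$-realization step: one must verify that the tangency/intersection data forced by configuration III are precisely those encoded in Lemma \ref{lem:specialconfig}, and that the two surviving solutions correspond to the two allowed values $(n_t,n_u)=(5,3),(6,3)$ rather than to spurious configurations ruled out (as in Example \ref{eg:Lac4.7charany}) by \cite[Lemma 11.1.1]{KM}. The Diophantine reduction itself is routine given the earlier lemmas; the delicate point is confirming that no additional $[4]$-singularities can be appended to the surviving Dynkin types without destroying ampleness of $-K_{S_0}$, which I would check by computing the $(-K_{S_0})$-degree of the strict transform of the last exceptional curve, as is done at the end of Example \ref{eg:Lac4.8cex-1}.
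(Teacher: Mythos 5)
Your high-level skeleton (the rank-one relation (\ref{eq:dioph2}) fed into (\ref{eq:dioph3}), gap bounds from Lemma \ref{lem:gap}, a finite computer search, then a descent to $\PP^2_k$ closed by Lemma \ref{lem:specialconfig}) is indeed the paper's strategy in outline, but there is a genuine gap at the very first step: configuration III does \emph{not} force $x_0$ into the restricted form $[3,2^g]$ you anticipate. The only constraint available is $e_0 < a_1 = \frac{g+1}{2g+1} \leq \frac35$ from \cite[Lemma 11.2.1]{KM}, so $x_0$ ranges over the \emph{entire} list of Table \ref{tab:e35}: $[3,2^k]$, $[4]$, $[4,2]$, $[3,2^k,3]$, $[2,3,2]$, the non-chain types $[2;[2],[2],[2^k,3]]$, and $[2,3,2^k]$ with $2 \leq k \leq 4$. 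The paper's proof must treat each type separately, with a different formula for $K_{S_0}^2$ in each case (Table \ref{tab:K2-Lac4.8}) and hence a different Diophantine equation. Your restriction would miss the case $x_0 = [4,2]$ entirely — and that is precisely the case producing the solution $(n_t, n_u) = (6,3)$; the other solution $(5,3)$ comes from $x_0 = [3,2^r]$ with $r=1$, not $r=g$. Moreover, the exclusions are not all arithmetic, so your expectation that the computer search will ``single out $\Char(k)=5$'' cannot work: the search is characteristic-blind. For instance, $x_0=[4]$ \emph{survives} the Diophantine search (with $g=2$ and $(v,n_0,n_1)=(2m,11-3m,1)$) and is eliminated only geometrically, by reducing $(S_1, A_1)$ to Case 1 of Lemma \ref{Lac4.7} — a characteristic-$2$ configuration — and observing that a cuspidal $A_1$ would force a tangency that Lemma \ref{lem:specialconfig} (5) permits only in characteristic $5$, a contradiction.

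The second missing idea is the second hunt step, which is the engine that converts the Diophantine output into the $\PP^2_k$-realization you sketch. Knowing $\Dyn(S_0)$ (e.g.\ $[4,2]+[3,2]+[3,2^5]+[2]$, after separately showing no $[4]$-points can be appended) does not yet tell you which curves to contract. In both surviving cases the paper runs the next hunt step, which extracts the $(-3)$-curve $E_2$ over the $[3,2^5]$-singularity; one must then show $T_2$ is not a net (in the $[3,2^r]$ case this requires ruling out $\Dyn(T_2)=2[2]+[2^3]$ via \cite[11.5.9 and 11.5.10 Lemma]{KM} and a Hirzebruch-surface argument), locate $\Sigma_2 \cap A_2$ in the smooth locus, and invoke \cite[13.7 Lemma]{KM} to identify $\wt S_2 \cong \FF_2$ with $B_2$ a tautological section, so that $(S_2, A_2+B_2)$ is a tacnode with $\pi_2$ in configuration I or II of order five. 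Only after this can one blow up the cusp of $A_2$, contract the ruling through it, and land on the configuration of Lemma \ref{lem:specialconfig} — a cuspidal cubic, a conic with fifth-order contact at one point, and a line tangent to the conic — from which $\Char(k)=5$ and the identification with Example \ref{eg:Lac4.8cex-1} follow. Your proposal jumps from the Diophantine solutions directly to the $\PP^2_k$ picture, so the step that actually produces the tangency data (and hence the characteristic restriction) is absent.
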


\begin{eg}\label{eg:Lac4.8cex-1}
In $\Char(k) = 5$, we follow the notation as in Lemma \ref{lem:specialconfig}.
Set $(n_t, n_u)=(5,3)$, $(6, 3)$, or $(5, 4)$.
Next, blow-up at $s$ twice along $L_{su}$.
Next, blow-up at $t$ $n_t$ times along $C$.
Finally, blow-up at $u$ $n_u$ times along $C$. 
Then we obtain the minimal resolution $\wt S$ of a log del Pezzo surface $S$ of rank one.
Indeed, we have the following:
\begin{align*}
\Dyn(S)&=
\begin{cases}
    2[3,2]+[3]+[2]+[2^4]      & ((n_t, n_u)=(5,3))\\
    [4,2]+[3,2^5]+[3,2]+[2]   & ((n_t, n_u)=(6, 3))\\
    [2,4]+[2,3,2^2]+[3]+[2^4] & ((n_t, n_u)=(5, 4)).
\end{cases}\\
(-K_S \cdot L)&=
\begin{cases}
    \frac15      & ((n_t, n_u)=(5,3))\\
    \frac{1}{35} & ((n_t, n_u)=(6, 3))\\
    \frac{5}{77} & ((n_t, n_u)=(5, 4)),
\end{cases}
\end{align*}
where $L$ is the strict transform of the $(-1)$-curve over $u$.
\end{eg}

\begin{proof}[Proof of Lemma \ref{Lac4.8}]
We write $g=g(A_1)$.
By \cite[Lemma 11.2.1]{KM}, we have $e_0 < a_1=\frac{g+1}{2g+1} \leq \frac35$. 
Thus the possibilities for $x_0$ are listed in Table \ref{tab:e35}.
By equation (\ref{eq:dioph2}), we can calculate $K_{S_0}^2$ as in Table \ref{tab:K2-Lac4.8}.
In what follows, we will treat all the candidates for $x_0$ to conclude that $x_0=[4,2]$ or $[3,2]$. 
\begin{table}[htbp]
\caption{The value of $K_{S_0}^2$ in the setting of Lemma \ref{Lac4.8}}
    \label{tab:K2-Lac4.8}
    \renewcommand{\arraystretch}{2}
\begin{tabular}{|c||c|}\hline
 $x_0$ &$K_{S_0}^2$ \\ \hline \hline
 $[3, 2^k]$ with $k \in \ZZ_{\geq 0}$ & $\dfrac{2(k+g+2)^2}{(2k+3)(2g+1)(4gk+4g-1)}$  \\ \hline
 $[4]$ & $\dfrac1{(2g+1)(2g-1)}$ \\ \hline
 $[3, 2^k, 3]$ with $k \in \ZZ_{\geq 0}$ & $\dfrac{k+2}{(2g+1)(4gk+6g-1)}$  \\ \hline
 $[2,3,2]$   &$\dfrac{1}{4g(2g+1)}$       \\ \hline
 $[2; [2], [2], [2^k, 3]]$ with $k \in \ZZ_{\geq 0}$& $\dfrac{1}{4(2kg+3g+k+1)(2g+1)}$ \\ \hline
 $x_0=[2, 3, 2^k]$ with $2 \leq k \leq 4$ &$\dfrac{2(gk-g-k-3)^2}{(2g+1)(3k+5)(8kg+8g+k-1)}$ \\ \hline
 $[2,4]$ & $\dfrac{2}{5 \cdot 7 \cdot 13}$ \\ \hline
\end{tabular}
\end{table}

\noindent\underline{\textbf{Case $x_0=[2,3,2^k]$ with $2 \leq k \leq 4$.}}
Suppose that $x_0=[2,3,2^k]$ with $2 \leq k \leq 4$.
Then $g \leq 4$ since $\frac6{11} \leq \frac{2k+2}{3k+5}=e_0 <a_1=\frac{g+1}{2g+1}$.
Assume in addition that $g \geq 3$.
Then $k=2$ since $\frac{2k+2}{3k+5}=e_0 <a_1=\frac{g+1}{2g+1} = \frac47$.
Hence $K_{S_0}^2=\frac{2^3}{7 \cdot 11 \cdot 73}$ or $\frac{2}{3^2 \cdot 11 \cdot 97}$, but by Lemma \ref{lem:gap}, $S_0$ cannot have singularities to compensate for the factor $73$ or $97$ in the denominator, a contradiction.
Thus $g=2$ and $K_{S_0}^2=\frac{2(k-5)^2}{5(3k+5)(17k+15)}=\frac{2 \cdot 3^2}{5 \cdot 11 \cdot 7^2}$, $\frac{2}{ 3 \cdot 5 \cdot 7 \cdot 11}$, and $\frac{2}{5 \cdot 17 \cdot 83}$ when $k=2$, $3$, and $4$ respectively.
Similarly, we conclude that $k=3$. 
Since $K_{S_0}^2+\Gap(x_0)=3+\frac{17 \cdot 31}{3 \cdot 5 \cdot 7 \cdot 11}$,
$S_0$ must have singularities $y_5$, $y_7$, and $y_{11} \neq x_0$ to compensate for the factors $5$, $7$, and $11$ in the denominator, respectively.
Lemma \ref{lem:totalgap} gives $\Gap(y_i) \leq 9$ and hence $y_5$, $y_7$, $y_{11}$, and $x_0$ are distinct from each other.
However, we obtain 
\begin{align*}
   &K_{S_0}^2+\Gap(y_5)+\Gap(y_7)+\Gap(y_{11})+\Gap(x_0) \\
   > &\Gap([3,2]) + \Gap([2,4]) + 2 \cdot \Gap([2,3,2^2])\\
   =&\frac{8}{5}+ \frac{6}{7}+2 \cdot \frac{38}{11}=9+\frac{141}{385}>9,
\end{align*}
a contradiction with Lemma \ref{lem:totalgap}.
Therefore $x_0 \neq [2,3,2^k]$ for $2 \leq k \leq 4$.

\noindent\underline{\textbf{Case $x_0=[4,2]$.}}
Suppose that $x_0=[4,2]$.
Then $g=2$ since $\frac47=e_0 < a_1=\frac{g+1}{2g+1}$.
Since $K_{S_0}^2=\frac{2}{5 \cdot 7 \cdot 13}$, 
$S_0$ must have singularities $y_5$ and $y_{13}$ to compensate for the factors $5$ and $13$ in the denominator, respectively.
By Lemma \ref{lem:totalgap}, we have $\Gap(y_5)+\Gap(y_{13}) < 9$.
Lemma \ref{lem:gap} now shows that $y_5$ and $y_{13}$ are $[3,2]$- and $[3,2^5]$-singularities, respectively.
On the other hand, by the shape of configuration III, there is a $[2]$-singularity on $S_0$, say $z$.
Then we have
\begin{align*}
K_{S_0}^2+\Gap(x_0)+\Gap(y_{5})+\Gap(y_{13})+\Gap(z)=9.
\end{align*}
By Lemma \ref{lem:totalgap}, we conclude that
$\Dyn(S_0)=[4,2]+[3,2]+[3,2^5]+m[4]+[2]$ for some $m \geq 0$. 
Moreover, $\Dyn(S_1)=[2]+[3, 2^5]+m[4]$ and only the $[2]$-singularity lies on $A_1$.
Now, as in the proof of \cite[Lemma 4.8]{Lac}, we obtain the following:
\begin{enumerate}
    \item $m=0$.
    \item The second hunt step extracts the $(-3)$-curve $E_2$ over the $[3,2^5]$-singularity.
    \item $(S_2, A_2+B_2)$ is a tacnode and $\pi_2$ is in configuration II of order five such that $\Sigma_2$ meets $E_2$ at the $[2^5]$-point.
    \item $\wt S_2$ is the Hirzebruch surface $\FF_2$ and $B_{2, \wt S_2}$ is a tautological section.
\end{enumerate}

Let $F \subset S_2$ be the ruling passing through the cusp of $A_2$ and let $h \colon S' \to S_2$ be the blow-up at the cusp.
Then, by contracting the strict transform of $F$, we obtain a birational morphism $S' \to \PP^2_k$.
Let $f \colon \wt S_0 \rightarrow \PP^2_k$ be the induced birational morphism,
and let $L \subset \wt S_0$ be the strict transform of the exceptional divisor of $h$.
Then $f(E_1)$ is a cuspidal cubic, $f(E_2)$ is a smooth conic intersecting with $f(E_1)$ at $f(\Sigma_2)$ with multiplicity five and at $f(F)$ transversally,
and $f(L)$ is a line passing through $f(F)$ and $f(\Sigma_1)$ such that $f(L)$ is tangent to $f(E_2)$ at $f(F)$.
By Lemma \ref{lem:specialconfig}, we conclude that $\Char(k)=5$ and $S_0$ is constructed as in Example \ref{eg:Lac4.8cex-1} with $(n_t, n_u)=(6,3)$.

In what follows, we may assume that $e(x_0) \leq \frac12$.

\noindent\underline{\textbf{Case $x_0=[2,3,2]$ or non-chain.}}
If $x_0$ is non-chain or $x_0 = [2,3,2]$, then $K_{S_0}^2=\frac1{4m}$ for some $m \in \Z$. By equation (\ref{eq:dioph3}), $S_0$ must have a singularity
to compensate for the factor 4 in the denominator, a contradiction with Lemma \ref{lem:gap}.
Hence $x_0$ is a chain singularity and $x_0 \neq [2,3,2]$.

\noindent\underline{\textbf{Case $x_0=[4]$.}}
Suppose that $x_0=[4]$.
Then equation (\ref{eq:dioph3}) gives us
\begin{align}\label{eq:diophantine3}
    \dfrac{1}{(2g+1)(2g-1)}+v_0+v_1+\sum_{k=0}^\infty n_k \cdot \dfrac{2(k+1)^2}{2k+3}=9,
\end{align}
where $v_0$ is the number of exceptional curves coming from Du Val singularities, $v_1$ is the sum of gaps coming from singularities on $S_0$ whose coefficients are $\frac12$, and $n_k$ is the number of points of type $[3,2^k]$. 
By \cite[Proposition 10.1]{KM} and Lemma \ref{lem:gap}, it holds that $v_1 \in \frac12\Z$. 
On the other hand, (\ref{eq:diophantine3}) shows that we can write $v_1$ as a fraction whose denominator is odd.
Hence $v_1 \in \Z$.
By the shape of configuration III, $S_0$ contains a $[2]$-singularity.
Hence $v_0 \geq 1$ and $v \coloneqq v_0+v_1 \geq 1$. 
From (\ref{eq:diophantine3}), we obtain that 
$v < 9$, $3v+2n_0 < 27$, and $v+\sum_{k=0}^\infty n_k \cdot k <9$. 
In particular, $n_k = 0$ for $k \geq 8$ and $\sum_{k=4}^7 n_k \leq 1$.
The latter shows that $4g^2-1$, the denominator of $K_{S_0}^2$, is a factor of $3^2 \cdot 5 \cdot 7 \cdot m$ with $m=11$, $13$, or $17$.
In particular, $4g^2-1 \leq 3^3 \cdot 5 \cdot 7 \cdot 17$ and hence $g \leq 63$.
Hence there are only finitely many possibilities for the tuple $(g, v, n_0, \ldots, n_7)$.
A computer search shows that (\ref{eq:diophantine3}) requires that $g=2$, $n_k=0$ for $k\geq 2$, and $(v,n_0,n_1) = (2m, 11-3m, 1)$ with $1 \leq m \leq 3$.

Since $a_1=\frac{g+1}{2g+1}=\frac35>\frac12$, we can take $M+F \in |K_{S_1}+A_1|$ as in \cite[\S 12]{KM}.
By the shape of configuration III, $A_1$ is contained in the smooth locus of $S_1$ and $S_1$ contains $(11-3m)[3]$-singularities away from $A_1$.
By \cite[Lemma 12.1]{KM}, it holds that $M =\emptyset$, $K_{S_1}+F$ is negative, and $F$ contains the $(11-3m)[3]$-singularities.
Hence $(v,n_0)=(6,2)$.
If there is a non-Du Val singularity, say $y$, on $S_1$ distinct from the $2[3]$-singularities, then [\textit{ibid}.] shows that $y \in F$ and
$(K_{S_1}+F \cdot F) \geq -2+3 \cdot \frac23=0$, a contradiction.
Hence $\Sing(S_1)$ consists of $2[3]$-singularities and Du Val singularities of total index $v-1=5$.
Note that $A_1^2=-4+10=6$.

Now $(S_1, A_1)$ is the same as that of Case 1 of the proof of Lemma \ref{Lac4.7} (or \cite[Lemma 4.7]{Lac}).
As a result, we can conclude that $\Char(k)=2$ and there is a birational morphism $f \colon \wt S_0 \to \PP^2_k$ such that $f(E_2)$ is a cuspidal cubic, 
$f(A_1)$ is a conic intersecting with $f(E_2)$ at one point with multiplicity six,
and $f(L)$ is a line intersecting with $f(E_2)$ only at its cusp.
However, as we pointed out in the proof of Lemma \ref{Lac4.7}, $f(L)$ must be tangent to $f(A_1)$ since $A_1$ is cuspidal, a contradiction with Lemma \ref{lem:specialconfig} (5).
Therefore $x_0 \neq [4]$.

\noindent\underline{\textbf{Case $x_0=[3,2^k,3]$.}}
Suppose that $x_0=[3,2^k,3]$ for some $k \geq 0$.
Then equation (\ref{eq:dioph3}) gives us
\begin{align}\label{eq:diophantine4}
    \dfrac{k+2}{(2g+1)(4gk+6g-1)}+v_0+v_1+\sum_{i=0}^\infty n_i \cdot \dfrac{2(i+1)^2}{2i+3}=9,
\end{align}
where $v_0$ is the number of exceptional curves coming from Du Val singularities, $v_1$ is the sum of gaps coming from singularities on $S_0$ whose coefficients are $\frac12$, and $n_i$ is the number of points of type $[3,2^i]$. 
By \cite[Proposition 10.1]{KM} and Lemma \ref{lem:gap}, it holds that $v_1 \in \frac12\Z$. 
On the other hand, (\ref{eq:diophantine4}) shows that we can write $v_1$ as a fraction whose denominator is odd.
Hence $v_1 \in \Z$.

By Lemma \ref{lem:gap}, we have $k+1= \Gap(x_0)\leq v_1 < 9-v_0$.
By the shape of configuration III, there is a $[2]$-singularity on $\overline{\Sigma}_1$, and hence $v_0 \geq 1$.
In particular, $k \leq 7$.
From (\ref{eq:diophantine4}), we obtain $\sum_{i=0}^\infty n_i \cdot i \leq \sum_{i=0}^\infty n_i \cdot \frac{2(i+1)^2}{2i+3} <9-v_1 \leq 8-k$.
In particular, $n_i=0$ for $i \geq 8-k$.
Hence $K_{S_0}^2=\frac{k+2}{(2g+1)(4gk+6g-1)}$ must be expressed as a fraction whose denominator is composed exclusively of factors of $2i+3$ with $i \leq 7-k$.

Assume that $k=7$.
Then $K_{S_0}^2=\frac{9}{(2g+1)(34g-1)} \in \frac{1}{3} \Z$.
Hence $(2g+1)(34g-1) \leq 27$, a contradiction with $g \geq 2$.

Assume that $k=6$.
Then $K_{S_0}^2=\frac{8}{(2g+1)(30g-1)} \in \frac1{3 \cdot 5} \ZZ$, which implies that $(2g+1)(30g-1) \leq 15$, a contradiction.

Assume that $k=5$.
Then $K_{S_0}^2=\frac{7}{(2g+1)(26g-1)} \in \frac1{3 \cdot 5 \cdot 7} \ZZ$, which implies that $(2g+1)(26g-1) \leq 3 \cdot 5 \cdot 7^2$.
Hence $2 \leq g \leq 3$.
However, $(2g+1)(26g-1)$ has a factor $17$ or $11$, a contradiction.

Assume that $k=4$.
Then $K_{S_0}^2=\frac{6}{(2g+1)(22g-1)} \in \frac1{3^2 \cdot 5 \cdot 7} \ZZ$, which implies that $(2g+1)(22g-1) \leq 3^3 \cdot 5 \cdot 7$.
Hence $2 \leq g \leq 4$.
However, $(2g+1)(22g-1)$ has a factor $43$, $13$, or $29$, a contradiction.

Assume that $k=3$. Then 
\begin{align}\label{eq:genus1}
   K_{S_0}^2=\frac{5}{(2g+1)(18g-1)} \in \frac1{3^2 \cdot 5 \cdot 7 \cdot 11} \ZZ, 
\end{align}
which implies that $(2g+1)(18g-1) \leq 3^2 \cdot 5^2 \cdot 7 \cdot 11$.
Hence $2 \leq g \leq 21$.
A computer search shows that (\ref{eq:genus1}) requires that $g=2$. 
Since $K_{S_0}^2 = \frac1{5 \cdot 7}$, there are $[3,2]+[3,2^2]$-singularities on $S_0$.
However, we cannot write $9-\Gap([3, 2^3,3])-\Gap([3, 2])-\Gap([3,2^2])-K_{S_0}^2 =\frac{29}{35}$ as the sum of gaps of klt singularities with coefficients $\leq \frac12$, a contradiction.

Assume that $k=2$.
Then $\sum_{i=0}^5 n_i \cdot i \leq \sum_{i=0}^\infty n_i \cdot \frac{2(i+1)^2}{2i+3} <9-v_1 \leq 8-k=6$.
In particular, there is at most one singularity on $S_0$ of the form $[3,2^l]$ with $3 \leq l \leq 5$. 
Then $K_{S_0}^2=\frac{4}{(2g+1)(14g-1)} \in \frac1{3 \cdot 5 \cdot 7 \cdot a} \ZZ$ with $a=3, 11$, or $13$, which implies that $(2g+1)(14g-1) \leq 3 \cdot 5 \cdot 7 \cdot 13$.
Hence $2 \leq g \leq 6$.
However, $(2g+1)(14g-1)$ has a factor $3^3$, $41$, $3^2 \cdot 11$, $23$, or $83$, a contradiction.

Assume that $k=1$.
Then, similarly, there is at most one singularity on $S_0$ of the form $[3,2^l]$ with $3 \leq l \leq 6$. 
Since $e([3,2^6])=\frac{2 \cdot 7^2}{3 \cdot 5}$, we have
\begin{align}\label{eq:genus2}
K_{S_0}^2=\frac{3}{(2g+1)(10g-1)} \in \frac1{3 \cdot 5 \cdot 7 \cdot a} \ZZ \text{ with $a=3, 11$, or $13$}, 
\end{align}
which implies that $(2g+1)(10g-1) \leq 3^2 \cdot 5 \cdot 7 \cdot 13$.
Hence $2 \leq g \leq 14$.
A computer search shows that (\ref{eq:genus2}) has no solution, a contradiction.

Therefore $k=0$.
Then $\sum_{i=0}^7 n_i \cdot i \leq \sum_{i=0}^\infty n_i \cdot \frac{2(i+1)^2}{2i+3} <9-v_1 \leq 8-k=8$.
In particular, there is at most one singularity on $S_0$ of the form $[3,2^l]$ with $4 \leq l \leq 7$.
Then 
\begin{align}\label{eq:genus3}
K_{S_0}^2=\frac{2}{(2g+1)(6g-1)} \in \frac1{3^2 \cdot 5 \cdot 7 \cdot a} \ZZ \text{ with $a=11, 13$, or $17$}, 
\end{align}
which implies that $(2g+1)(6g-1) \leq 3^2 \cdot 5 \cdot 7 \cdot 17$.
Hence $2 \leq g \leq 20$.
A computer search shows that (\ref{eq:genus3}) requires that $(g, a)=(2,11)$, $(3,17)$, or $(6,13)$. 

Assume that $g=6$ in addition.
Then $K_{S_0}^2 = \frac{2}{5 \cdot 7 \cdot 13}$.
Hence $n_2 \geq 1$ and $n_6 \geq 1$, a contradiction with $\sum_{i=0}^7 n_i \cdot i <8$.

Assume that $g=3$ in addition.
Then $K_{S_0}^2 = \frac{2}{7 \cdot 17}$.
Hence $n_2 \geq 1$ and $n_8 \geq 1$, a contradiction with $\sum_{i=0}^7 n_i \cdot i <8$.

Thus $g=2$.
Since $K_{S_0}^2= \frac{2}{5 \cdot 11}$, we have $n_4 \geq 1$ and either $n_1 \geq 1$ or $n_6 \geq 1$.
In the latter case, we obtain $\sum_{i=0}^7 n_i \cdot i \geq 8$, a contradiction.
Hence $S_0$ contains the $[3^2]+[3,2]+[3,2^4]$-singularities. 
Since $9-\Gap([3, 2^4])-\Gap([3, 2])-\Gap([3^2])-K_{S_0}^2=\frac{102}{5 \cdot 11}$, $S_0$ must contain another $[3, 2^4]$-singularity, but $\Gap([3, 2^4]) > \frac{102}{5 \cdot 11}$, a contradiction.

Therefore $x_0 \neq [3,2^k, 3]$ for any $k \geq 0$.
In what follows, we may assume that $e_0<\frac12$.

\noindent\underline{\textbf{Case $x_0=[3,2^r]$.}}
Suppose that $x_0=[3,2^r]$. Then equation (\ref{eq:dioph3}) gives us

\begin{align}\label{eq:diophantine5}
    \dfrac{2(r+g+2)^2}{(2r+3)(2g+1)(4gr+4g-1)}+v+\sum_{k=0}^r n_k \cdot \dfrac{2(k+1)^2}{2k+3}=9,
\end{align}
where $v$ is the number of exceptional curves coming from Du Val singularities, and $n_k$ is the number of points of type $[3,2^k]$. 
From this formula, we obtain that $v < 9$, $3v+2n_0 < 27$, and $v+\sum_{k=0}^r n_k \cdot k <9$. 
In particular, we have $r \leq 8$.
By the maximality of $r$, we have $n_k = 0$ for $k > r$.
By the shape of configuration III, there are $[3,2^{g-1}]$- and $[2]$-singularities on $\overline{\Sigma}_1$. 
In particular, we have $2 \leq g \leq r+1$ and $v \geq 1$.
Hence there are only finitely many possibilities for the tuple $(g, v, r, n_0, \ldots, n_8)$.
A computer search shows that (\ref{eq:diophantine5}) requires that $g=2$, $r=1$, and $(v,n_0,n_1) = (1+2m, 7-3m,2)$ with $0 \leq m \leq 2$.

In particular, $\Sing(S_1)$ consists of a singularity $w$ on $A_1$, which is a $[2]$-singularity, $n_0[3]$-singularities, and Du Val singularities of total index $(v-1)$ disjoint from $A_1$.
  
Since $a_1=\frac{g+1}{2g+1}=\frac35>\frac12$, we can take $M+F \in |K_{S_1}+A_1|$ as in \cite[\S 12]{KM}.
By the shape of configuration III, $A_1$ is contained in the smooth locus of $S_1$ and $S_1$ contains $(7-3m)[3]$-singularities away from $A_1$.
By \cite[Lemma 12.1]{KM}, it holds that $M$ is either empty or irreducible, $K_{S_1}+M+F$ is negative, and $M+F$ contains the $(7-3m)[3]$-singularities.
The negativity shows that both $F$ and $M$ cannot contain $3[3]$-singularities.
On the other hand, $F \cap M$ is a single point by [\textit{ibid}.].
Hence $(v,n_0)=(5,1)$.
The next hunt step extracts the $(-3)$-curve since $e(w, K_{S_1}+a_1A_1)=\frac{3}{10}<\frac13 =e([3])$. 

Suppose that $T_2$ is a net.
Then $\Dyn(T_2)=2[2]+[2^3]$ by \cite[11.5.9 Lemma]{KM}.
Let $F_1$ and $F_2$ be the fibers containing $2[2]$ and $[2^3]$, respectively.
On the other hand, $(A_2 \cdot \Sigma_2) \geq 2$ and $(E_2 \cdot \Sigma_2) \geq 1$.
If $(E_2 \cdot \Sigma_2) \geq 3$, then $(K_{T_2}+a_1A_2+e_1E_2 \cdot \Sigma_2) \geq -2+\frac65+1>0$, a contradiction.
If $E_2$ is a section, then $T_2$ is smooth, a contradiction.
Hence $E_2$ is a $2$-section contained in $T_2^\circ$.
Then $(E_2 \cdot F_1)=(E_2 \cdot F_2)=1$ since $F_1$ and $F_2$ are of multiplicity two.
Taking the minimal resolution of $T_2$ and running MMP over $\PP^1_k$, we obtain a birational map $T_2 \dashrightarrow H$ to a Hirzebruch surface such that the strict transform of $E_2$ in $H$ is a smooth $2$-section whose self intersection number is two, a contradiction with \cite[11.5.10 Lemma]{KM}.
Therefore $T_2$ is not a net.

Suppose that $\Sigma_2$ intersects with $A_2$ at the $[2]$-singularity in $T_2$.
Then $K_{T_2}+\Sigma_2$ is plt at the $[2]$-singularity by contractibility.
In particular, $K_{S_2}^2=5$ and $(A_2 \cdot \Sigma_2)=m+\frac12$ for some $m \in \Z_{\geq 0}$.
Since $0>(K_{T_2} + a_1 A_2 +e_1 E_2 \cdot \Sigma_2) \geq -1+\frac{6m+3}{10}+\frac13$, we have $m=0$.
Thus $\pi_2$ induces an isomorphism on $A_1$.
In particular, $(K_{S_2}+A_2, A_2)=2$.
By the shape of configuration III, we have $A_2^2=8$, which leads to $(K_{S_2} \cdot A_2)=-2$.
However, this implies that $(K_{S_2} \cdot A_2)^2 \neq K_{S_2}^2 \cdot A_2^2$, a contradiction.

Therefore $\Sigma_2 \cap A_2 \subset T_2^\circ$. 
Since $0>(K_{T_2}+a_1A_2+e_1E_2 \cdot \Sigma_2)=-1+\frac35(A_2 \cdot \Sigma_2)+\frac13(E_2 \cdot \Sigma_2)$, we have $(E_2 \cdot \Sigma_2)=(A_2 \cdot \Sigma_2)=1$ in $T_2$.
In particular, $E_2 \subset S_2^\circ$ is a smooth rational curve.
Since a $[2]$-singularity lies on $A_2 \subset S_2$, \cite[13.7 Lemma]{KM} now shows that $\wt S_2$ is the Hirzebruch surface $\FF_2$.
Hence $\Dyn(S_1)=[3]+[2]+[2^4]$ and $\Sigma_2$ passes through the $[2^4]$-singularity.
In particular, $(S_2, A_2+B_2)$ is a tacnode and $\pi_2$ is in configuration I of order five and $B_2$ is a tautological section.
Therefore we obtain the same configuration $(S_2, A_2+B_2)$ as in the case where $x_0=[4,2]$.
Furthermore, a similar analysis shows that $\Char(k)=5$ and $S_0$ is constructed as in Example \ref{eg:Lac4.8cex-1} with $(n_t, n_u)=(5,3)$. 
\end{proof}

To treat the case where $\pi_1$ is in configuration $u$ or $v$, we make \cite[Lemma 15.3]{KM} characteristic-free.

\begin{lem}[{\cite[Lemma 15.3]{KM}}]\label{lem:UVF}
Suppose that $A_1$ has a double point, $g(A_1) \geq 2$, and that the configuration is $u$ or $v$.
Then $S_1$ has at least two singular points along $F$ as in \cite[\S 12]{KM}.
\end{lem}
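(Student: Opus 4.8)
The plan is to argue by contradiction, assuming that at most one singular point of $S_1$ lies on $F$, and to recover the genus of $A_1$ by a purely intersection-theoretic computation on the minimal resolution, thereby dispensing with the Riemann--Hurwitz argument that forces the hypothesis $\Char(k)\neq 2$ in \cite[Lemma 15.3]{KM}. First I would record the data of configurations $u$ and $v$: by \cite[Lemma 11.2.1]{KM} we have $g(A_1)=2$, and since the coefficient exceeds $\frac12$ we may fix $F+M\in|K_{S_1}+A_1|$ together with the relevant properties of \cite[Lemma 12.1]{KM}, namely the irreducibility and rationality of the moving part, the negativity of $K_{S_1}+M$, and the incidence of $F$ and $M$ with $\Sing(S_1)$. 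I would also fix the $\PP^1_k$-fibration $\phi\colon \wt S_1\to\PP^1_k$ attached to the pencil of \cite[\S 12]{KM}, in which $A_1$ is a bisection, and identify $F$ with the distinguished degenerate fiber singled out in these configurations.

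Next I would set up the numerical skeleton exactly as in the treatment of configurations I--III. The shape of configuration $u$ or $v$ determines $\overline{\Sigma}_1^2$ and $K_{S_1}\cdot\overline{\Sigma}_1$ from $\Dyn(x_0)$, so that the rank one identity (\ref{eq:dioph2}) yields $K_{S_0}^2$; feeding this into (\ref{eq:dioph3}) together with Lemma \ref{lem:totalgap} produces a Diophantine equation in the multiplicities of the singular points of $S_0$. Since every gap in Table \ref{tab:e35} is nonnegative, this equation has only finitely many solutions, which I would enumerate (by hand where possible, and otherwise recording that the candidate list is finite before invoking computer assistance, as in Lemmas \ref{Lac4.6}--\ref{Lac4.8}). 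For each surviving $\Dyn(S_0)$ one reads off $\Dyn(S_1)$ together with the location of the double point of $A_1$.

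The heart of the argument is to show that a single singular point on $F$ is incompatible with $g(A_1)=2$. Under the contradiction hypothesis I would compute the arithmetic genus of $A_1$ through $\phi$: on $\wt S_1$ the strict transform $\wt A_1$ is a bisection whose arithmetic genus is given by adjunction $2p_a(\wt A_1)-2=\wt A_1\cdot(\wt A_1+K_{\wt S_1})$, while $p_a(A_1)$ exceeds $p_a(\wt A_1)$ only by the contributions of the exceptional curves of $\wt S_1\to S_1$ met by $\wt A_1$, that is, by the singular points of $S_1$ lying on $A_1$. These contributions are concentrated in the degenerate fibers, and since $K_{S_1}+M$ is negative the moving part $M$ can absorb only a bounded amount of singularity, by the same estimate $(K_{S_1}+M)\cdot M\ge 0$ used in the proofs of Lemmas \ref{Lac4.7} and \ref{Lac4.8}; a single singular point on $F$ then forces $p_a(A_1)\le 1$, contradicting $g(A_1)=2$. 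The crucial feature is that this entire bookkeeping is intersection-theoretic, hence insensitive to $\Char(k)$.

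The main obstacle is precisely the step where \cite{KM} applies Riemann--Hurwitz to the degree two cover $A_1\to\PP^1_k$ induced by $\phi$: in characteristic two this cover may be inseparable or wildly ramified, so the naive count of branch points collapses and the genus cannot be read off from the branch divisor. I expect the adjunction computation above to replace it uniformly, but the delicate cases will be $\Char(k)=2$ and $\Char(k)=3$, where one must still check, for each Dynkin type in the enumerated list, that the admissible degenerate fibers of $\phi$ genuinely carry two \emph{distinct} singular points of $S_1$ on $F$. Here I would invoke the explicit plane configurations of Lemma \ref{lem:specialconfig} to rule out the coincidences that occur only in small characteristic, exactly in the spirit of Lemmas \ref{Lac4.7} and \ref{Lac4.8}.
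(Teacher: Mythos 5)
There is a genuine gap at what you call the heart of the argument: the claim that a single singular point of $S_1$ on $F$ forces $p_a(A_1)\le 1$ by adjunction bookkeeping is asserted without any mechanism, and no such mechanism exists. The fixed part $F$ of $|K_{S_1}+A_1|$ does not meet $A_1$ at the relevant singular point at all --- in fact one shows $b\notin A_1$ for the unique singular point $b\in F$ --- so the number of singular points on $F$ places no direct constraint on the genus of the bisection. The paper's proof derives the contradiction by an entirely different route: it extracts the divisor $E$ adjacent to $F$ to get a $\PP^1_k$-fibration $f\colon Z\to\PP^1_k$ as in \cite[12.3]{KM}, establishes that $E$ is a section with $E_{\wt S_1}^2\le -3$, that $f$ has multiple fibers, and that $K_{S_1}+\Gamma_{S_1}$ fails to be lc at $b$; it then bounds the number $n$ of multiple fibers by $3$ using the klt condition, kills $n=1$ (plt at $b$) and $n=3$ (non-chain forces $e(b)\ge\frac23$ via \cite[Proposition 10.1 (2)(d)]{KM}, against $e(b)\le e_0<a_1<\frac23$), and for $n=2$ classifies each multiple fiber via \cite[Lemma 11.5.9, 11.5.13]{KM} and computes $e(b)$ for chains such as $[3,3,2]$, $[2,3,2^3]$, etc., reaching a contradiction in every case either with $e(b)<\frac23$, with the maximality of $e_0$ in the hunt, or with non-lc-ness at $b$. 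None of this is a genus computation, and the contradiction is \emph{not} $g(A_1)\le 1$: the value $g(A_1)=2$ is perfectly consistent and is only used once, to identify the Hirzebruch surface $\FF_3$ after the MMP step. Your adjunction identity $2p_a(\wt A_1)-2=\wt A_1\cdot(\wt A_1+K_{\wt S_1})$ merely recomputes a known quantity and connects to nothing; you also misquote the estimate from Lemmas \ref{Lac4.7}--\ref{Lac4.8}, where $(K_{S_1}+M)\cdot M\ge 0$ is the \emph{contradiction} with the negativity of $K_{S_1}+M$, not a usable bound on ``absorbed singularity.''

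Two further misdirections: the Diophantine enumeration via (\ref{eq:dioph2})--(\ref{eq:dioph3}) and Lemma \ref{lem:totalgap} plays no role in this lemma (it belongs to the subsequent classification step, after Lemma \ref{lem:UVF} is available), so setting it up here does not advance the proof; and the proposed fallback for $\Char(k)=2,3$ via Lemma \ref{lem:specialconfig} is a non sequitur, since that lemma concerns a cuspidal plane cubic and a quintically tangent conic arising in configurations II and III, and there is no reduction from the local structure of $F$ in configurations $u$ and $v$ to that plane configuration. You correctly identified the obstacle --- the Riemann--Hurwitz step in \cite[Lemma 15.3]{KM} fails for the possibly inseparable or wildly ramified double cover $A_1\to\PP^1_k$ in characteristic two --- but the characteristic-free replacement is the multiple-fiber classification and hunt-coefficient maximality analysis sketched above, not an intersection-theoretic genus count.
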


\begin{proof}
Take $F+M \in |K_{S_1}+A_1|$ as in \cite[\S 12]{KM}.
Then $F \not \subset S_1^\circ$.
Assume that $F \cap \Sing(S_1)$ is a single point, say $b$.
Let $f \colon Z \to \PP^1_k$ be the $\PP^1_k$-fibration as in \cite[12.3]{KM} obtained by extracting the unique divisor $E$ adjacent to $F$.
Let $\Gamma$ be the $f$-fiber passing through the cusp of $A_{1, Z}$. 
Then, as in the proof of \cite[Lemma 15.3]{KM}, we obtain the following.
\begin{enumerate}
    \item $E$ is a section of $f$.
    \item $f$ has at least one multiple fiber.
    \item $b \not \in A$.
    \item $K_{S_1}+\Gamma_{S_1}$ is not lc at $b$.
\end{enumerate}
Since the first hunt step is an isomorphism at $b \in S_1$, we have $e(b) \leq e_0 <a_1<\frac23$.
By \cite[12.3 (3)]{KM}, it holds that $E_{\wt S_1}^2 \leq -3$.

Let $\{F_1, \ldots, F_n\}$ be the set of multiple fibers of $f \colon Z \to \PP^1_k$.
Write $v_i=F_i \cap E$, where $v_i \in Z$ must be singular.
Then $n \leq 3$ since otherwise $b \in S_1$ is not klt.
If $n=1$, then $K_{S_1}+\Gamma_{S_1}$ is plt at $b$, a contradiction.

Assume that $n=3$.
Then $b$ must be a non-chain singularity whose central curve is $E_{\wt S_1}$.
Since $E_{\wt S_1}^2 \leq -3$, \cite[Proposition 10.1 (2)(d)]{KM} shows that $e(b) \geq \frac23$, a contradiction.

Hence $n=2$. 
By a similar argument, both $v_1$ and $v_2$ must be chain singularities.
Let $\wt f \colon \wt S_1 \to Z \to \PP^1_k$ be the induced morphism.

Let us classify the possibilities for $F_i \cap \Sing(Z)$.
Suppose that $F_i \cap A_{1,Z}$ contains a smooth point of $Z$.
Then $F_i$ must be of multiplicity two and $F_i \cap A_{1,Z}$ is a single point.
As in \cite[Lemma 11.5.13]{KM}, we can show that $F_i \cap \Sing(Z)$ is a $2[2]$-, $[2^3]$-, or $[2;[2],[2],[2^{k-3}]]$-singularity for some $k \geq 4$.
The last case cannot occur since $v_i$ is a chain singularity.

On the other hand, suppose that $F_i \cap A_{1,Z}$ contains a singular point of $Z$.
By \cite[11.5.5]{KM}, $F_i$ can contain at most two singular points.
Since $v_i \in F_i$, it holds that $F_i \cap A_{1,Z}$ is exactly one singular point of $Z$, say $w_i$.
Since $a_1<\frac23$, by \cite[Lemma 8.0.7 (2)]{KM}, $w_i \in A_{1,Z}$ is of spectral value at most one, and so by \cite[Lemma 8.0.8]{KM}, is an (almost) Du Val singularity such that the exceptional divisor adjacent to $A_{1,Z}$ is a component of $\wt f^{-1}(F_i)$ of multiplicity two.
\cite[Lemma 11.5.9]{KM} now shows that $F_i$ is of type (4) or (5) with $k=2$.
In the former (resp.\ latter) case, $v_i$ is a $[2,3,2]$- (resp.\ $[3]$-) singularity and $w_i$ is a $[2]$- (resp.\ $[2^2]$-) singularity.

Take $\wt S_1 \to H$ to be the MMP/$\PP^1_k$ isomorphic along $E_{\wt S_1}$.
Then $E_H$ is the negative section of $H$ disjoint from $A_{1,H}$.
By the shape of the $F_i$'s, we can check that the induced morphism $A_1 \to A_{1,H}$ is an isomorphism.
Since $g(A_1)=2$, it holds that $H$ is the Hirzebruch surface $\FF_3$, $E^2=E_H^2=-3$, and $A_H^2=12$.

Now we check the value of $e(b)$ for each case.
If one of the $v_i$'s is a $[2,3,2]$-singularity, then $e(b) \geq e([2,3,2,3,2]) = \frac23>a_1$, a contradiction.

Suppose that one of the $v_i$'s is a $[3]$-singularity for some $i$.
If the other is a $[3]$- (resp.\ $[2^3]$-) singularity, then $e(b)=e([3,3,3])=\frac57$ (resp.\ $e([3,3,2^3])=\frac{16}{23}$) $>\frac23$, a contradiction.
Thus the other is a $[2]$-singularity and $e(b)=e([3,3,2])=\frac{8}{13}$.
By the shape of the $F_i$'s, we have $A_{\wt S_1}^2=8$.
Hence $E_1^2=-2$ (resp.\ $-3$) and $x_0$ is a $[2^2, 2, 2^2]$- (resp.\ $[3,2^2]$)-singularity when $\pi_1$ is in configuration $u$ (resp.\ $v$).
However, we have $e(b)>e_0$, a contradiction with the maximality of $e_0$.

Suppose that one of the $v_i$'s is a $[2^3]$-singularity for some $i$.
If the other is also a $[2^3]$-singularity, then $e(b)=e([2^3,3,2^3])=\frac23$, a contradiction.
Thus the other is a $[2]$-singularity and $e(b)=e([2,3,2^3])=\frac{4}{7}$.
By the shape of the $F_i$'s, we have $A_{\wt S_1}^2=7$.
Hence $E_1^2=-3$ (resp.\ $-4$) and $x_0$ is a $[3, 2^2]$- (resp.\ $[4]$)-singularity when $\pi_1$ is in configuration $u$ (resp.\ $v$).
However, we have $e(b)>e_0$, a contradiction with the maximality of $e_0$.

Suppose that both $v_1$ and $v_2$ are $[2]$-singularities.
Then $b$ is a $[2,3,2]$-singularity and $K_{S_1}+\Gamma_{S_1}$ is lc at $b$, a contradiction.
Combining these results, we obtain the assertion.
\end{proof}

\begin{lem}[{\cite[Lemma 4.9]{Lac}}]
Suppose that $A_1$ has a double point, $g(A_1) \geq 2$, and that the configuration $u$ or $v$ arises.
Then $\Char(k)=5$, the configuration $v$ arises, and $S_0$ is constructed as in Example \ref{eg:Lac4.8cex-1} with $(n_t, n_u)=(5,4)$.
\end{lem}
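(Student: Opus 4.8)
The plan is to follow the same Diophantine-narrowing strategy used in the proofs of Lemmas \ref{Lac4.6}--\ref{Lac4.8}, but now with the extra structural input supplied by Lemma \ref{lem:UVF}. The starting point is that, since $\pi_1$ is in configuration $u$ or $v$, Lemma \ref{lem:UVF} guarantees that $S_1$ has \emph{at least two} singular points along $F$, where $F+M \in |K_{S_1}+A_1|$. This is the piece of information that was missing in the general treatment of configurations I--III, and it should let me pin down the Dynkin type of $x_0$ much more rigidly than the coefficient bound alone.

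\medskip

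\noindent\textbf{Outline of the argument.} First I would record, as in \cite[Lemma 11.2.1]{KM}, that in configurations $u$ and $v$ one has $a_1=\frac{g+1}{2g+1}$ and $g=2$, so that $a_1=\frac35$ and $e_0<a_1\le\frac35$; hence by Lemma \ref{lem:gap} the type $x_0$ is again confined to the list in Table \ref{tab:e35}. Second, I would determine the possible $x_0$ directly from the shapes of configurations $u$ and $v$: in these configurations $E_1^2$ and the adjacency of $E_1$ to $A_1$ force $x_0$ to be one of a short list of chain types, and in fact the computations embedded in the proof of Lemma \ref{lem:UVF} already show that, whenever the two multiple fibers $F_1,F_2$ of the auxiliary fibration are accounted for, the type of $x_0$ in configuration $u$ versus $v$ is determined (e.g.\ $[3,2^2]$ or $[4]$ depending on the fiber types). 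The maximality of $e_0$, invoked repeatedly in Lemma \ref{lem:UVF}, rules out the configuration-$u$ cases and forces us into configuration $v$. Third, with $x_0$ fixed I would use equation (\ref{eq:dioph2}) to compute $K_{S_0}^2$, then feed this into the gap identity (\ref{eq:dioph3}) together with Lemma \ref{lem:totalgap} to solve for $\Dyn(S_0)$; a short computer search should leave only finitely many candidates, and the prime-factor obstructions (of the kind ``$S_0$ cannot compensate for the factor $p$ in the denominator'') should eliminate all but one.

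\medskip

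\noindent\textbf{Identifying the geometry and the characteristic.} Once $\Dyn(S_0)$ is determined, I would realize $S_0$ explicitly by contracting a suitable collection of negative curves in $\wt S_0$ to produce a birational morphism $f\colon \wt S_0 \to \PP^2_k$, exactly as in Case 2 of Lemma \ref{Lac4.7} and in the $[4,2]$-case of Lemma \ref{Lac4.8}. The images $f(E_1)$, $f(A_1)$, $f(L)$, etc.\ will be a cuspidal cubic, a conic meeting it with high multiplicity, and auxiliary lines; matching this configuration against Lemma \ref{lem:specialconfig} should then force $\Char(k)=5$ (via parts (5) or (6), concerning tangency of $M_u$ or $L_{su}$ to $Q$) and identify $S_0$ with the surface of Example \ref{eg:Lac4.8cex-1} for $(n_t,n_u)=(5,4)$, whose $\Dyn$ is $[2,4]+[2,3,2^2]+[3]+[2^4]$.

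\medskip

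\noindent\textbf{The main obstacle.} I expect the hardest part to be the bookkeeping in the second step: translating the list of admissible multiple-fiber types from Lemma \ref{lem:UVF} into the precise value of $E_1^2$ and hence $x_0$ in each of configurations $u$ and $v$, and then verifying that the maximality of $e_0$ (rather than a Diophantine contradiction) is what eliminates configuration $u$. The subtlety is that the coefficient inequalities alone admit several types, and only the combination of the two-singular-points condition, the isomorphism $A_1\to A_{1,H}$ onto a section of $\FF_3$, and the maximality of $e_0$ collapses the list to the single surviving case; keeping these three constraints consistent is where the real care is needed.
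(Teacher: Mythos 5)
Your proposal has a genuine gap at its very first step, and it propagates through the rest of the outline. You assert that in configurations $u$ and $v$ one has $a_1=\frac{g+1}{2g+1}=\frac35$, so that $e_0<\frac35$ and $x_0$ is confined to Table \ref{tab:e35}. This is false: the formula $a_1=\frac{g+1}{2g+1}$ belongs to configuration III, and the whole reason the paper treats $u$ and $v$ separately (see the sentence ``Suppose in addition that $\pi_1$ is not in configuration $u$ or $v$\dots Then $e_0<a_1\le\frac35$'' in \S\ref{LacS4.2}) is that in these configurations $a_1$ \emph{exceeds} $\frac35$: the actual proof computes $a_1=\frac{9}{14}$ in configuration $u$ and $a_1=\frac{7}{11}$ in configuration $v$. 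Consequently the filter ``$x_0$ lies in Table \ref{tab:e35}'' is unavailable, and it would in fact discard the critical cases: in the residual situation the paper must exclude, one finds $x_0=[4,2^2]$ (configuration $u$) and $x_0=[5]$ (configuration $v$), both with $e_0=\frac{3m-6}{3m-2}\big|_{m=4}=\frac{m-2}{m}\big|_{m=5}=\frac35$ exactly, i.e.\ outside the strict-inequality range of the table. The determination of the \emph{shape} of $x_0$ ($[m,2^2]$ resp.\ $[m]$) comes from the blow-up structure of configurations $u$ and $v$ themselves, not from a coefficient bound.

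Your second step also misreads Lemma \ref{lem:UVF}. The case analysis inside that lemma (the types $[3,2^2]$, $[4]$, $[2^2,2,2^2]$, the $\FF_3$ geometry, the maximality of $e_0$) all takes place \emph{under the contradiction hypothesis} that $F$ meets only one singular point; every branch ends in a contradiction, so nothing there determines $x_0$ in the actual situation, and in particular maximality of $e_0$ is not what eliminates configuration $u$. The paper's real structure is: Lemma \ref{lem:UVF} supplies ``at least two singular points along $F$,'' after which the proof \emph{defers the main line entirely to} \cite[Lemma 4.9]{Lac} (this is where $\Char(k)=5$, configuration $v$, and $(n_t,n_u)=(5,4)$ are actually obtained), and adds exactly one modification: when $A_1\subset S_1^\circ$ and $F$ passes through exactly the $2[3]$-singularities, it computes $m=4$ resp.\ $m=5$, $K_{S_0}^2=\frac{1}{105}$ resp.\ $\frac{2}{165}$, and $v=5$ via equation (\ref{eq:dioph3}), concluding $A_1^2=6$ with $\Sing(S_1)=2[3]$ plus Du Val of total index five; this is the $(S_1,A_1)$ of Case 1 of Lemma \ref{Lac4.7}, and the contradiction is the one from the $x_0=[4]$ case of Lemma \ref{Lac4.8}: the reduction forces $\Char(k)=2$ with a conic--cuspidal-cubic configuration in which $f(L)$ would have to be tangent to $f(A_1)$ because $A_1$ is cuspidal, contradicting Lemma \ref{lem:specialconfig} (5). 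Your final paragraph correctly anticipates that Lemma \ref{lem:specialconfig} and the plane model are where the characteristic gets pinned down, but without the correct values of $a_1$, the correct source of the $x_0$-shapes, and the reduction-to-Lemma-\ref{Lac4.7} mechanism, the proposed argument as written does not go through.
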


\begin{proof}
By Lemma \ref{lem:UVF}, there are at least two singular points along $F$.
Then the proof goes exactly along the lines of \cite[Lemma 4.9]{Lac}, with one modification.
Indeed, if $A_1 \subset S_1^\circ$, then \cite[15.4.1]{KM} cannot exclude only the case where $F$ must pass through exactly the $2[3]$-singularities.

To exclude this case, let us assume that $A_1 \subset S_1^\circ$ and that $F$ passes through exactly the $2[3]$-singularities.
By \cite[Lemma 12.3]{KM}, $\Sing(S_1)$ consists of $2[3]$-singularities and Du Val singularities.
Set $m=-(E_{1,\wt S_0})^2$ and let $v$ be the total index of Du Val singularities.

Assume in addition that $\pi_1$ is in configuration $u$.
Then $\Sing(S_0)$ consists of an $[m,2^2]$-singularity, an $[4,2]$-singularity, $2[3]$-singularities, and Du Val singularities.
Furthermore, $x_0$ is an $[m,2^2]$-singularity.
Since 
\begin{align*}
e([4,2]) =\frac47 \leq e_0 = \frac{3m-6}{3m-2} <a_1=\frac{9}{14},
\end{align*}
we obtain $m=4$ and $A_1^2=-4+10=6$.
Since the Picard rank of $S_0$ is one, we have 
\begin{align*}
    K_{S_0}^2=\dfrac{(K_{S_0} \cdot \overline{\Sigma}_1)^2}{\overline{\Sigma}_1^2}
    =\dfrac{(\frac{1}{35})^2}{\frac{3}{35}}=\dfrac{1}{105}.
\end{align*}
Then equation (\ref{eq:dioph3}) gives us
\begin{align*}
    v
    &=9-\left(K_{S_0}^2+2\Gap([3])+\Gap([4,2,2])+\Gap([4,2])\right)\\
    &=9-\left(\dfrac{1}{105}+2 \cdot \frac23 + \frac95 + \frac67\right)=5.
\end{align*}

On the other hand, assume in addition that $\pi_1$ is in configuration $v$.
Then $\Sing(S_0)$ consists of an $[m]$-singularity, a $[2,3,2^2]$-singularity, $2[3]$-singularities, and Du Val singularities.
Furthermore, $x_0$ is an $[m]$-singularity.
Since 
\begin{align*}
e([2,3,2^2])=\frac6{11} \leq e_0=\frac{m-2}{m} <a_1=\frac{7}{11},  
\end{align*}
we obtain $m=5$ and $A_1^2=-4+10=6$.
Since the Picard rank of $S_0$ is one, we have 
\begin{align*}
    K_{S_0}^2=\dfrac{(K_{S_0} \cdot \overline{\Sigma}_1)^2}{\overline{\Sigma}_1^2}
    =\dfrac{(\frac{2}{55})^2}{\frac{6}{55}}=\dfrac{2}{165}.
\end{align*}
Then equation (\ref{eq:dioph3}) gives us
\begin{align*}
    v
    &=9-\left(K_{S_0}^2+2\Gap([3])+\Gap([5])+\Gap([2,3,2^2])\right)\\
    &=9-\left(\dfrac{2}{165}+2 \cdot \frac23 - \frac45 + \frac{38}{11}\right)=5.
\end{align*}

Therefore, for each case, we have $A_1^2=6$ and $\Sing(S_1)$ consists of $2[3]$-singularities and Du Val singularities of total index five.
Now $(S_1, A_1)$ is the same as that of Case 1 of the proof of Lemma \ref{Lac4.7} (or \cite[Lemma 4.7]{Lac}).
As in the case $x_0=[4]$ of the proof of Lemma \ref{Lac4.8}, we obtain a contradiction.
\end{proof}

\begin{proof}[Proof of Proposition \ref{Lac4.5}]
This follows from \cite[Lemmas 11.1.1 and 11.2.1]{KM}, and the lemmas in this subsection.
\end{proof}

\subsection{$A_1$ has a simple cusp}
\cite[\S 4.3]{Lac} is devoted to the case where $A_1$ has a simple cusp.
In $\Char(k)=0$, such a log del Pezzo surface $S_0$ is classified in \cite[\S 16]{KM} under the additional assumption that $\pi_1^{\mathrm{alg}}(S_0^\circ) =0$.
This assumption and the Bogomolov bound are used in the proof of \cite[Lemma 16.5]{KM}, but we can rewrite the proof to avoid using them.
Indeed, the Bogomolov bound is used only to show that there is no singularity on $S_2$ other than $y$ and a $[2]$-singularity in the case where $A_2 \not\subset S_2^\circ$, but we do not use this fact later.
The assumption $\pi_1^{\mathrm{alg}}(S_0^\circ) =0$ is used only to check that $S_2=S(A_1)$, $S(A_1+A_2)$, or $S(A_4)$ in the fourth paragraph of the proof, but in $\Char(k) \neq 2,3$, we can also check it by \cite[Lemma B.9]{Lac}.
Note that the assumption $S \neq \PP^2_k$, $S(A_1)$ is omitted in [\textit{ibid}.].
In the remainder of the proofs in \cite[\S 16]{KM}, the assumption that $\pi_1^{\mathrm{alg}}(S_0^\circ) =0$ and the Bogomolov bound are only used in that of \cite[Lemma 16.3 (1)-(5)]{KM}.
For this reason, Lacini gives counterparts of [\textit{ibid}.], which are \cite[Lemmas 4.11-4.15]{Lac}.
The assumption $\Char(k) \neq 2,3$ is used only in the following.
In several arguments, we use \cite[Lemma B.9]{Lac} to $S_1$, $S_2$, or $W$ as in \cite[12.4 Lemma-Definition]{KM} with $S=S_1$ or $S_2$.
In the proof of \cite[Lemma 4.14]{Lac}, we use \cite[Theorem B.6]{Lac} to determine $\Dyn(S_1)$.
We also use \cite[Lemma 11.5.11]{KM} in \cite[Lemma 4.15]{Lac} to show that $T_2$ is not a net.

\begin{rem}\label{rem:Section4.3}
Let us mention some remarks on the proofs in \cite[\S 4.3]{Lac}.
    \begin{enumerate}
        \item In Case 2 of the proof of \cite[Lemma 4.11]{Lac}, we conclude that, if $a_2 \geq \frac56$, then there is a tiger of $(S_2, a_2A_2+b_2B_2)$ over the cusp of $A_2$. We obtain this tiger by blowing-up at the cusp three times along $A_2$.
        In particular, this tiger is not contained in $\tilde{S}_0$ when $\pi_1$ is in configuration I or II.
        Since we only assume that $S_0$ has no tigers in $\wt S_0$, this does not yet lead to a contradiction.
        However, we have already treated the case where $\pi_1$ is in configuration I in \cite[Lemma 4.6]{Lac} (or Lemma \ref{Lac4.6}).
        Furthermore, we can conclude that $\pi_1$ is not in configuration II as follows.
        
        Recall that $S_2$ is the Du Val del Pezzo surface of type $[2]+[2^2]$, $A_2$ is its cuspidal anti-canonical member, and $B_2$ is the $(-1)$-curve passing through the $[2]$- and $[2^2]$-singularities.

        Suppose in addition that $x_1 \in A_1$.
        Then $\Sigma_2 \cap A_{1, T_2} = \emptyset$ since $(S_2, A_2+B_2)$ is a fence.
        By the shapes of $\pi_2$, we conclude that $A_{1, \wt S_1}^2=A_2^2=6$.

        Suppose in addition that $x_1 \not \in A_1$.
        If $E_2 \cap \Sigma_2 \in T_2$ is a singular point, then $x_1$ is a non-chain singularity such that two of the three branches are $[2]$ and $[2^2]$.
        However, this implies that $x_1$ is Du Val or of coefficient $\geq \frac23=a_1$, a contradiction.
        Hence $E_2 \cap \Sigma_2$ is a smooth point and $\pi_2$ is in configuration I or (II, $x^{r-1}$) for some $r \geq 1$ by \cite[Lemma 11.1.1]{KM}.
        By the shapes of $\pi_2$, we conclude that $A_{1, \wt S_1}^2=A_2^2-1=5$.

        Thus, in each case, we have $A_{1, \wt S_1}^2 \geq 5$.
        If $\pi_1$ is in configuration II, then $E_1^2 \geq 5-5=0$, a contradiction.

        Consequently, the statement of \cite[Lemma 4.11]{Lac} still holds.

        \item In the situation of Cases 2a and 2b of the proof of \cite[Lemma 4.12]{Lac}, the equalities $K_W^2=(K_W \cdot G_W)=1$ do not imply that the pullback of $G_W$ is in $|-K_{\wt{W}}|$. 
        The following is a modification of the arguments of the original proof.

        By \cite[Lemma 12.1 (1)]{KM}, (the reduced structure of) $M \subset S_1$ is isomorphic to $\PP^1_k$.
        Then $G$ should meet $M_Y$ at one point transversally.
        Hence $G_W \cong G \cong \mathbb{P}^1_k$. 
        
        Indeed, there are exactly four smooth rational curves in $S=S(2A_4)$ of anti-canonical degree one.
        Such a curve $L$ passes through $2[2^4]$-singularities at one of which $K_S+L$ is not plt (see \cite[Example B.11]{Lac} for more detail).
        
        Consequently, we have shown that $G_W$ is smooth. However, the case where $W=S(2A_4)$ must still be considered in the remainder of the proof of \cite[Lemma 4.12]{Lac}.

        \item In Case 2 of the proof of \cite[Lemma 4.12]{Lac}, it is written that we can proceed as in \cite[Lemma 4.11]{Lac} in the case where $K_W+G_W$ is dlt. 
        As we said in (1), we have to show in addition that $\pi_1$ is not in configuration II. 
        The following is the proof.
        
        We have $A_{1,\wt S_1}^2=6$ since the center of $\pi$ cannot be $A_{1,W} \cap G_W$ since $K_Y+G$ is plt.
        Hence $A_{1, \wt S_1}^2=6$.
        If $\pi_1$ is in configuration II, then $E_1^2 = 6-5=1$, a contradiction.

        Consequently, in Case 2 of the proof of \cite[Lemma 4.12]{Lac}, we may assume that $K_W+G_W$ is not dlt.

        \item In Case 2 of the proof of \cite[Lemma 4.12]{Lac}, we have to deny in addition that $W=S(2A_4)$ with $G_W$ smooth as we mentioned in (2).
        The following is the proof.
        
        Suppose that $W=S(2A_4)$ and $G_W$ is smooth.
        Then $G_W$ passes through the $2[2^4]$-singularities, at one of which $K_W+G_W$ is not plt.
        Since $K_Y+G$ is plt and $G$ passes through at most one singularity, we conclude that $G$ is a $-(2+r)$-curve and $M_Y$ passes through a $[3;[2],[2^2],[2^r]]$-singularity (or $[2,3,2^2]$-singularity when $r=0$).
        In particular, the image of $G$ in $S_1$ is a $[2+r, 2^4]$-singularity on $A_1$.
        Since its spectral value equals $5r$, \cite[Lemma 8.0.7 and 11.2.1 (13)]{KM} shows that $\frac{5r}{5r+1}<a_1<\frac45$.
        Hence $r=0$.
        When $\pi_1$ is in configuration II (resp.\ III, $u$, $(u, n)$, $v$, $(v, n)$, $(v,n^2)$, $(v, f)$, $(v, f^2)$, and $w$), we have $x_0=[3,2^5]$ (resp.\ $[2,4,2^5]$, $[5,2^5]$, $[2^2,5,2^5]$, $[2,3,5,2^5]$, $[6,2^5]$, $[7,2^5]$, $[8,2^5]$, $[2,6,2^5]$, $[2^2, 6, 2^5]$, and $[3,2,5,2^5]$).
        In each case, applying \cite[Lemma 11.2.1]{KM}, we can check that $e_0>a_1$, a contradiction.

        Consequently, the statement of \cite[Lemma 4.12]{Lac} still holds.

        \item In Case 2 of the proof of \cite[Lemma 4.14]{Lac}, we should show that configuration III cannot arise.
        The following is the proof:
        Suppose that configuration III arises.  
        Then there are exactly two non-Du Val singularities on $S_0$, which are $x_0$ and an $[3]$-singularity.
        In particular, \cite[(10.3)]{KM} gives 
        \begin{align*}
        h^0(-K_{\overline{S}}-D) &\geq 1-2+K_{S_0}^2+\Gamma\\
        & \geq  -1+K_{S_0}^2+(1-e([3]))+(1-e(x_0))\\
        & >  1-e([3])-a_1=0.
        \end{align*}
        In particular, any exceptional divisor over a non-Du Val point is a tiger, a contradiction.

        \item To finish the proof of \cite[Proposition 4.10]{Lac}, it remains to show the following: If $A_1$ has a simple cusp and $S_1$ is not Gorenstein, then $(S_2, A_2+B_2)$ is not a tacnode.
        An analysis similar to that in the proof of \cite[Lemma 18.5]{KM} (or \cite[Lemma 16.3 (6)]{KM}) shows this assertion.

        Consequently, the statement of \cite[Proposition 4.10]{Lac} still holds.
    \end{enumerate}
\end{rem}

\subsection{$A_1$ has a simple node}\label{LacS4.4}

\cite[\S 4.4]{Lac} is devoted to the case where $A_1$ has a simple node.
In $\Char(k)=0$, such a log del Pezzo surface $S_0$ is classified in \cite[\S 17]{KM} under the additional assumption that $\pi_1^{\mathrm{alg}}(S_0^\circ) =0$.
This assumption is used to obtain \cite[Proposition 17.3 (2) and (3)]{KM}.
This is why we are concentrated on proving \cite[Proposition 4.19 (2) and (3)]{Lac}.
The assumption $\Char(k) \neq 2,3$ is used only to ensure the uniqueness of the surface as stated in \cite[Lemma B.10]{Lac}, and to guarantee the correctness of the classification given by \cite[Proposition 13.5]{KM}.
We also use \cite[Lemma 11.5.11]{KM} in \cite[Lemma 16.6]{KM} to show that $T_2$ is not a net.

\begin{rem}
Let us mention some remarks on \cite[Proposition 4.19]{Lac}.
    \begin{enumerate}
        \item The inequality as in \cite[Proposition 17.3 (3)]{KM} should be replaced by $b<\frac67$ because we only know that $\frac{b}{2}+3a<3$ and $a>b$.
        Similarly, the inequality as in \cite[Proposition 4.19(1)(a)]{Lac} should be replaced by $b_2<\frac67$.
        
        \item In the setting of \cite[Proposition 4.19]{Lac}, we can deduce $a_2+b_2>1$ as follows.
        By \cite[Lemma 4.18 (6) and (7)]{Lac}, it holds that $a_1 \geq \frac23$ and there are exactly two non-Du Val points, say $x_0$ and $y$, on $S_0$.
        Since $A_1$ has a simple node, $\pi_1$ is as listed in \cite[Lemma 11.1.1]{KM}.
        In particular, $y$ is preserved via the first hunt step.
        By \cite[Proposition 10.1]{KM}, we obtain $a_2+b_2 > a_1 + e(y) \geq 1$.
        
        \item The following is the reason why we only have to consider the case where $(S_2, A_2+B_2)$ is a fence or a tacnode in the proof of \cite[Proposition 4.19]{Lac}.
        In such a setting, $A_2$ is not contracted by $\pi_2$ since $A_2 \not \cong \PP^1_k$.
        By \cite[Lemma 16.6]{KM}, $T_2$ is not a net. 
        Since we assume $x_1 \not \in A_1$ at the beginning of the proof, $(S_2, A_2+B_2)$ is not a banana.
        Since $a_2+b_2>1$ as we checked in (2), Proposition \ref{general-strategy} shows that $(S_2, A_2+B_2)$ is a fence or a tacnode.

        \item In Case 1 of the proof of \cite[Proposition 4.19]{Lac}, it is claimed that $(K_{S_2}+\frac23 A_2+B_2 \cdot B_2)>0$ because there are no tigers, but this seems to be wrong.
        Indeed, we only know that $a_2>a_1=\frac23$ and $(S_2, a_2A_2+b_2B_2)$ has no tigers, which only implies that $(K_{S_2}+a_2 A_2+B_2 \cdot B_2)>0$. 
        We will treat this case in Proposition \ref{prop:node-tacnode} to conclude that there is at most one possibility such that $A_1$ has a simple node and $(S_2, A_2+B_2)$ is a tacnode, which is Example \ref{eg:cexLac4.19}.
    \end{enumerate}
\end{rem}

\begin{eg}\label{eg:cexLac4.19}
In $\Char(k) \neq 2$, take a nodal cubic curve $A \subset \PP^2_k$.
Choose an inflection point $s \in A$ and let $G$ be the tangent line at $s$.
Then there is the other point $t \in A$ such that the tangent line $F$ of $A$ at $t$ also passes through $s$.
Indeed, by blowing up $\PP^2_k$ at $s$, the strict transform of $A$ becomes a bisection, and the existence of $t$ follows from the Riemann-Hurwitz formula.
(Here we use the assumption that $\Char(k) \neq 2$.)
By a similar argument, we can check that there are two points in $A$ at which the tangent line passes through $t$.
Let $u \in A$ be such a point and let $B$ be the tangent line at $u$.

First blow-up $\PP^2_k$ at $s$ three times along $A$ and at $t$ twice along $A$.
Next blow-up at $u$ three times along $B$.
Finally blow-up at the node of $A$ twice along one of the two branches of the node.
Then we obtain the minimal resolution $\wt S$ of a log del Pezzo surface $S$ of rank one such that $\Dyn(S)=[2,3,2^2]+[2,3,2]+[2^3]$.
Indeed, the strict transform of the $(-1)$-curve over $s$ in $S$ is of $(-K_S)$-degree $\frac{9}{44}$ and hence $-K_S$ is ample.

This example is not contained in \cite[\S 4.4]{Lac} because $A_1 \subset S_1$ has a node, $x_1 \not \in A_1$, and $(S_2, A_2+B_2)$ is a tacnode of genus two.

We note that such a log del Pezzo surface is log liftable. 
Indeed, we can write down the equations of curves $A$, $G$, $F$, $B \subset \PP^2_k$ regardless of $\Char(k)$ by virtue of \cite[Theorem 3.1]{LPS}.
However, we do not know whether this surface has a tiger in its minimal resolution or not.
\end{eg}

Note that $S=S(2A_1+A_3)$ satisfies $\pi_1^{\mathrm{alg}}(S^\circ) \neq 0$ in $\Char(k)=0$. 
Hence Example \ref{eg:cexLac4.19} does not contradict the content of \cite[\S 17]{KM}. 
Now let us show the following.

\begin{prop}\label{prop:node-tacnode}
    Suppose that $A_1$ has a simple node. 
    \begin{enumerate}
        \item[\textup{(1)}] If $(S_2, A_2+B_2)$ is a tacnode, then $S_0$ is constructed as in Example \ref{eg:cexLac4.19}.
        \item[\textup{(2)}] If $(S_2, A_2+B_2)$ is a fence, then one of \cite[Proposition 4.19 (1)--(3)]{Lac} holds.
    \end{enumerate}
\end{prop}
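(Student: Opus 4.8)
The plan is to analyze the two cases of Proposition~\ref{prop:node-tacnode} separately, building on the structural constraints already extracted in the preceding remarks. In both cases we start from the established facts (from \cite[Lemma 4.18]{Lac} and the remarks above) that $a_1 \geq \frac23$, that there are exactly two non--Du Val points $x_0$ and $y$ on $S_0$, that $a_2+b_2>1$, and that we may assume $x_1 \not\in A_1$. The governing equations are the gap relation~(\ref{eq:dioph1}) together with the Picard--rank--one identity~(\ref{eq:dioph2}), which force $\Dyn(S_0)$ into finitely many possibilities once the shape of $\pi_1$ (as in \cite[Lemma 11.1.1]{KM}, the nodal case) and the second hunt step $\pi_2$ are pinned down.

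\textbf{The tacnode case (1).} Here $(S_2, A_2+B_2)$ is a tacnode, so by Proposition~\ref{general-strategy}(11) $K_{S_2}+B_2$ is plt and the tacnode sits at $q_2$; since $x_1\not\in A_1$, we have $A_2\cap B_2=\{q_2\}$. First I would determine the two non--Du Val singularities. Since $A_1$ has a simple node, the genus jumps at $q_2$ force $g(A_2)=2$, which constrains the configuration of $\pi_2$ via \cite[Lemma 11.5.9]{KM} and the plt condition on $B_2$. I would then read off $\overline\Sigma_1^2$ and $(K_{S_0}\cdot\overline\Sigma_1)$ from $\Dyn(x_0)$, substitute into~(\ref{eq:dioph3}), and—exactly as in the proofs of Lemmas~\ref{Lac4.7} and~\ref{Lac4.8}—run the gap bound from Lemma~\ref{lem:gap} to reduce to finitely many Dynkin types, concluding (with computer assistance where the Diophantine system is large) that $\Dyn(S_0)=[2,3,2^2]+[2,3,2]+[2^3]$. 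Finally I would reconstruct the surface explicitly: contract appropriate exceptional curves of $\wt S_0$ to reach $\PP^2_k$, identify the images of the relevant curves as a nodal cubic $A$, its inflectional tangent $G$, and the tangent lines $F$, $B$ through the points $t$, $u$ described in Example~\ref{eg:cexLac4.19}, and verify via the Riemann--Hurwitz computation (which requires $\Char(k)\neq 2$, as noted in that example) that these curves exist and meet with the stated multiplicities. This identifies $S_0$ with the surface of Example~\ref{eg:cexLac4.19}.

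\textbf{The fence case (2).} Here $(S_2, A_2+B_2)$ is a fence, so by Proposition~\ref{general-strategy}(9) $A_2$ and $B_2$ meet transversally in one point. This is precisely the situation treated in \cite[Proposition 4.19]{Lac}, and—incorporating the corrections recorded in the surrounding remarks (in particular the replacement of the bound in \cite[Proposition 17.3(3)]{KM} by $b_2<\frac67$)—the original argument of \cite[Proposition 4.19]{Lac} now goes through to yield one of its conclusions~(1)--(3). The point is that the erroneous positivity claim $(K_{S_2}+\tfrac23 A_2+B_2\cdot B_2)>0$ flagged in Case~1 of that proof was only invoked to exclude the tacnode, which we have now handled separately in part~(1); once the tacnode is removed, the fence analysis of \cite[Proposition 4.19]{Lac} is self-contained and correct.

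\textbf{The main obstacle} I expect is the reconstruction step in part~(1): going from the combinatorial data $\Dyn(S_0)=[2,3,2^2]+[2,3,2]+[2^3]$ back to an explicit birational model over $\PP^2_k$ and verifying that the required configuration of a nodal cubic with three prescribed tangent lines genuinely exists in characteristic $\neq 2$. The existence of the points $t$ and $u$ rests on the Riemann--Hurwitz argument after blowing up, and one must check that the tangency and passing-through conditions are simultaneously satisfiable and that the resulting $-K_S$ is ample (via the $(-K_S)$-degree $\frac{9}{44}$ computation of Example~\ref{eg:cexLac4.19}). The Diophantine reduction, while tedious, is routine once the configuration of $\pi_2$ is fixed and the gap table of Lemma~\ref{lem:gap} is applied.
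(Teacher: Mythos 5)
Part (2) of your proposal coincides with the paper: the fence case is disposed of by observing that the flawed positivity claim in Case 1 of the proof of \cite[Proposition 4.19]{Lac} only affected the tacnode alternative, and the remaining fence analysis goes through verbatim. The problem is your part (1), where your plan diverges from the paper and has a concrete gap. You propose to pin down $\Dyn(S_0)$ by the Diophantine relation (\ref{eq:dioph3}) together with the gap table of Lemma \ref{lem:gap}. But that table only lists klt singularities of coefficient $<\frac35$, and it was usable in \S\ref{LacS4.2} precisely because there $e_0<a_1\leq\frac35$. In the nodal case one has $a_1\geq\frac23$ (indeed $a_1=\frac23$ here), so flushness only gives $e_0<\frac23$, and singularities with coefficient in $[\frac35,\frac23)$ fall outside Table \ref{tab:e35}; your enumeration does not close as described. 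The paper never runs a gap search in this proof: it imports from Case 2 of the proof of \cite[Proposition 4.19]{Lac} the facts $z\in\Sigma_2$, $a_1=\frac23$, $g=2$, $r=1$, $E_1^2=-3$, $K_{S_2}^2=4$, which already force $x_0=[2,3,2^2]$ directly from the shape of the first hunt step, with no Diophantine input.

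The second missing idea is the determination of $x_1$ and of $S_2$, which your sketch skips entirely but which is what actually makes the reconstruction possible. The paper shows $(K_{S_2}+B_2\cdot B_2)>-a_2(A_2\cdot B_2)>-2$ to get $B_2\not\subset S_2^\circ$, deduces $S_2=S(2A_1+A_3)$ from \cite[Theorem B.6]{Lac} (using that $K_{S_2}+B_2$ is plt at singular points, so $S_2$ has a chain singularity), rules out non-chain $x_1$ via \cite[Proposition 10.1 (2)(d)]{KM}, and then eliminates the chains $[3,2]$, $[3,2^3]$, $[2,3,2^3]$ using the maximality of $e_1$ and $e_0$ and a $\PP^1_k$-fibration obstruction (\cite[Lemma 3.4]{KM}), leaving $x_1=[2,3,2]$. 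Only then does the reconstruction work: one extracts the two $(-2)$-curves over the $2[2]$-points of $S_2$, uses the $\PP^1_k$-fibration defined by $B_{2,Z}$ together with \cite[Lemma B.10]{Lac} to identify the unique reducible fiber $F_1+F_2$ of $(-1)$-curves meeting at the $[2^3]$-point, and contracts to $\PP^2_k$; this is what certifies that the images form a nodal cubic, its tangent at an \emph{inflection} point, and the two further tangent lines with the prescribed incidences of Example \ref{eg:cexLac4.19}. Your phrase ``contract appropriate exceptional curves of $\wt S_0$ to reach $\PP^2_k$'' hides exactly this mechanism; without determining $x_1$ and the fibration structure on $S_2$, knowing a candidate Dynkin type (even if your search did terminate) would not yield the tangency configuration, so the proof as proposed does not go through.
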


\begin{proof}
The assertion (2) follows by the same method as in the proof of \cite[Proposition 4.19]{Lac}.
Hence we may assume that $(S_2, A_2+B_2)$ is a tacnode.
As in Case 2 of the proof of [\textit{ibid}.], we obtain that $z \in \Sigma_2$, $a_1 = \frac23$, $g=2$, $r=1$, $E_1^2=-3$, and $K_{S_2}^2=4$.
In particular, $x_0$ is a $[2,3,2^2]$-singularity.
Since the first hunt step is an isomorphism at $x_1$, we have $e_1 \leq e_0$.
Since $(K_{S_2}+B_2 \cdot B_2)>-a_2(A_2 \cdot B_2)>-2$, we have $B_2 \not \subset S_2^\circ$.
Since $K_{S_2}+B_2$ is plt at singular points, $S_2$ contains a chain singularity.
Hence \cite[Theorem B.6]{Lac} shows that $S_2=S(2A_1+A_3)$.

Assume that $x_1$ is non-chain.
Then $B_2$ must pass through $2[2]+[2^3]$-singularities on $B_2$.
Since $x_1$ is non-Du Val, \cite[Proposition 10.1 (2)(d)]{KM} shows that $e_1 \geq \frac23=a_1$, a contradiction with flushness.
Hence $x_1$ is a chain singularity.

Since $(B_2 \cdot -K_{S_2})=(B_2 \cdot A_2)=g=2$, $B_2$ is a $0$-curve in $\wt S_2$ and hence $E_2^2=-3$.
Since $B_2 \not \subset S_2^\circ$, we have $x_1=[3, 2]$, $[3, 2^3]$, $[2,3,2]$, or $[2,3,2^3]$.
In the first case, we obtain $\frac25 = e_1 < e(z=[2^2], K_{S_1}+a_1A_1)=\frac49$, a contradiction with the maximality of $e_1$.
In the last case, we obtain $\frac47 = e_1 > e_0 = e([2,3,2^2])=\frac{6}{11}$, a contradiction with the maximality of $e_0$.

Assume that $x_1=[3, 2^3]$. Note that $K_{S_2}+B_2$ is plt. Let $Z \to S_2$ be the extraction of the $(-2)$-curves adjacent to $B_2$.
Then the singularities of $Z$ consist of $[2^2]+2[2]$-singularities.
On the other hand, $B_{2,Z}$ defines a $\PP^1_k$-fibration $Z \to \PP^1_k$ of relative Picard rank one.
By \cite[Lemma 3.4]{KM}, $Z$ cannot contain a $[2^2]$-singularity, a contradiction.
Hence $x_1=[2,3,2]$.

Let $Z \to S_2$ be the extraction of two $(-2)$-curves, say $G_1$ and $G_2$, over the $2[2]$-singularities.
Then the strict transform of $B_{2,Z}$ defines a $\PP^1_k$-fibration $Z \to \PP^1_k$ with a unique reducible fiber, which has exactly two components $F_1+F_2$.
By \cite[Lemma B.10]{Lac}, $F_1$ and $F_2$ are $(-1)$-curves and $F_1 \cap F_2$ is the $[2^3]$-singularity. 
We may assume that $(G_1 \cdot F_1)=(G_2 \cdot F_2)=1$.
Let $F_3$ be the $(-2)$-curve over the $[2^3]$-singularity adjacent to $F_2$.
Let $f \colon \wt S_2 \to \PP^2_k$ be the contraction of the strict transform of $G_2$ and the components of the total transform of $F_1+F_2$ except for $F_3$.
Then $f(A_{2, \wt S_2})$ is a nodal cubic curve, $f(G_{1,\wt S_2})$ is the tangent line of $f(A_{2, \wt S_2})$ at an inflection point $s=f(F_{1, \wt S_2})$.
The curve $f(F_{3, \wt S_2})$ is the tangent line at $t=f(F_{2,\wt S_2})$ and passes through $s$.
Moreover, $f(B_{2,\wt S_2})$ is the tangent line at another point $u \in f(A_{2, \wt S_2})$ and passes through $t$.
Therefore $S_0$ is constructed as in Example \ref{eg:cexLac4.19}.
\end{proof}

Note that log del Pezzo surfaces as in \cite[Proposition 4.19 (1) and (3)(a)]{Lac} are classified in \cite[\S 17.7--17.14]{KM} with one exception, which is written in \cite[Lemma 4.20]{Lac}.
We conclude that the assertions in \cite[\S 4.4]{Lac} still hold after replacing \cite[Proposition 4.19]{Lac} with Proposition \ref{prop:node-tacnode}.

\subsection{Smooth fences}\label{LacS4.5}
\cite[\S 4.5]{Lac} is devoted to dlt pairs $(S, X+Y)$ such that $S$ is a rank one log del Pezzo surface, and $X$ and $Y$ are smooth rational curves on $S$ such that $(X, Y)=1$.
When $\Char(k) = 0$, such a pair is analyzed by applying mainly \cite[Proposition 13.3]{KM}, which relies on deformation theory and the Bogomolov bound.
For that reason, \cite[\S 4.5]{Lac} contains characteristic-free results that can serve as a substitute for \cite[Proposition 13.3]{KM}.

\begin{rem} 
        \cite[Lemma 4.21]{Lac} still holds for a pair $(S, X+Y)$ such that $S$ is a rank one log del Pezzo surface, and $X$ and $Y$ are curves on $S$ such that $(X, Y)=1$.
        Indeed, we use this lemma in \cite[Lemmas 4.39 and 4.40]{Lac} for $(S_2, A_2+B_2)$ with $B_2$ singular.
\end{rem}

\subsection{$A_1$ is smooth}\label{LacS4.6}
\cite[\S 4.6]{Lac} is devoted to the case where $A_1$ is smooth.
In $\Char(k)=0$, such a log del Pezzo surface $S_0$ is classified in \cite[\S 18-19]{KM}.
Since the Bogomolov bound is used ubiquitously in \cite[\S 18]{KM}, Lacini rewrites the proof entirely to handle this case.
The hardest part is to show that $(S_2, A_2+B_2)$ is not a fence.
To prove this, he makes use of the contents of \cite[\S 4.5]{Lac}.
By \cite[Lemmas 4.34 and 4.40]{Lac} and \cite[Lemmas 18.7 and 18.8]{KM}, we conclude that $(S_2, A_2+B_2)$ is a (smooth) banana.

Note that \cite[\S 19]{KM} is based on \cite[Proposition 13.2]{KM} whose proof relies on the Bogomolov bound. 
Lacini avoids the use of the Bogomolov bound as follows.
Since $(S_2, A_2+B_2)$ is a banana, it holds that $x_1 \in A_1$, $B_2$ contains at most one singular point of $S_2$, and $\pi_2$ is as listed in \cite[Lemma 11.1.1]{KM}.
By \cite[Lemma 4.33]{Lac}, we have $a_2+b_2>a_1+e_1>1$.
Hence $\pi_2$ is not in configuration I.
If $B_2 \subset S_2^\circ$, then $B_2$ is a tiger of $(S_2, a_2 A_2 + b_2 B_2)$, a contradiction.
In particular, both $E_2$ and $B_2$ pass through exactly one singular point and $\Sigma_2$ must intersect with $E_2$ at a smooth point of $T_2$.
Since $\pi_2$ is not in configuration I, $\Sigma_2$ intersects with $A_2$ at a singular point.
If $A_1$ passes through four singular points, then $(S_2, A_2+B_2)$ is a smooth banana such that $A_2$ and $B_2$ contain exactly two and one singular points respectively, a contradiction with \cite[Lemma 4.41]{Lac}.
Therefore $A_1$ passes through three singular points, and $(S_2, A_2+B_2)$ is a smooth banana such that both $A_2$ and $B_2$ contain exactly one singular point.
The rest of the arguments is the same as in the proof of \cite[Proposition 13.2]{KM}.
As a result, we obtain the list as in \cite[\S 19]{KM}.

Note that there is at most one log del Pezzo surface erroneously omitted in the list \cite[19.4.1.2]{KM} (see Remark \ref{rem:KM19.4.1.2} for more detail).
The assumption $\Char(k) \neq 2,3$ is used only in the following.
To prove \cite[Lemma 4.39 (4) and (5)]{Lac}, we need to use \cite[Lemma B.9]{Lac} (see also Remark \ref{rem:Section4.6} (4)).
In the proof of \cite[Lemma 18.7]{KM}, we use \cite[Lemma B.9]{Lac} (see also Lemma \ref{lem:sigma2}).
In the proof of \cite[Lemma 18.8]{KM}, we use the Riemann-Hurwitz formula for 2-sections and 3-sections.

\begin{rem} \label{rem:Section4.6}
Let us mention some remarks on the proofs in \cite[\S 4.6]{Lac}.
    \begin{enumerate}
        \item In Case 1 of the proof of \cite[Lemma 4.33]{Lac}, we do not have to use the Riemann-Hurwitz formula.
        Indeed, \cite[Lemma 11.5.9]{KM} shows that a triple fiber cannot contain $[3,2]+[2]$-singularities.
        As a result, \cite[Lemma 4.33]{Lac} holds in $\Char(k) \geq 0$.
        
        \item Since I cannot follow why $(K_{S_1}+A_1) \cdot A_1 <\frac23$ in Case 2 of the proof of \cite[Lemma 4.34]{Lac}, I give another proof as follows.
        Suppose that $A_1$ is smooth, $(S_2, A_2+B_2)$ is a tacnode, and $A_1$ contains four singular points.
        As in the proof of [\textit{ibid}.] before Case 1, we obtain that
        \begin{enumerate}
            \item $S_1$ is Du Val along $A_1$,
            \item $(K_{S_1}+A_1) \cdot A_1 \geq \frac13$,
            \item $a_1<\frac34$, and
            \item $\pi_2$ is in configuration II with $g=2$ such that the $[2^g]$-singularity lies on $A_1$.
        \end{enumerate}
        Take $s,t,u$ so that $A_1$ contains $[2^s]+[2^t]+[2^u]+[2^2]$-singularities.
        We have $E_1^2 \leq -3$ since otherwise $x_0$ is Du Val.
        Since $e([3,[2],[2],[2]])=\frac23$ and $e_0<a_1<\frac34$, we obtain $x_0=[3,[2],[2],[2]]$ by \cite[Lemmas 8.3.9 and L.1]{KM}.
        In particular, $s=t=u=1$.
        Then $(K_{S_1}+A_1 \cdot A_1)=-2+3 \cdot \frac12+\frac23=\frac16$, a contradiction with (b).
        
        \item In Case 1b of the proof of \cite[Lemma 4.36]{Lac}, the observation on branches only gives us that $x_0=[2;[2],[2,2],[3]]$ or $[3,[2],[2],[2]]$. Thus we have to deny the latter case.
        Suppose that $x_0 = [3,[2],[2],[2]]$. We obtain $\alpha=-(K_{S_2} + A_2 \cdot A_2) = 2-2 \cdot \frac12-\frac{s-1}{s}>0$, where $s$ is the index of the fourth singularity on $A_1$.
        On the other hand, we have shown that $\alpha \leq 0$ in the beginning of Case 1 of [\textit{ibid}.], a contradiction.
        Consequently, the statement of [\textit{ibid}.] still holds.
        
        \item In the proof of \cite[Lemma 4.39 (4)]{Lac}, the condition $\frac{1}{1-\alpha} \in \Z$ does not imply $\alpha=0$ directly since we do not know whether $\alpha \in \Z$.
        We will treat this case in Proposition \ref{prop:fence-nonDV} to conclude that [\textit{ibid}.] still holds.
        We also show in Lemma \ref{lem:Lac4.39-Char23} and Example \ref{eg:Lac4.39-char3} that the assumption $\Char(k) \geq 5$ is essential.
        
        \item In Cases 2b and 4a in the proof of \cite[Lemma 4.40]{Lac}, it seems that we can obtain only $j \geq 2$ instead of $j \geq 3$ since 
        $e([2^{j+1}], K_{S_1}+a_1A_1) =\frac{j+1}{j+2} \geq \frac13a_1+\frac13= e([3]\in A_1, K_{S_1}+a_1A_1)$ and $a_1>\frac23$ by \cite[Lemma 4.33]{Lac}.
        In Case 4a, the equation $\frac{1}{x}+\frac1y=\frac{6j+6}{10j+9}$ still has no solution.
        On the other hand, in Case 2b, a new possibility arises: $A_2$ passes through either $2[3]$, $[3]+[2]$, or $[3]+[2,2]$-singular points.
        However, in any case, we obtain that $A_2^2 \leq 0$ since $(A_{2, \wt S_2})^2=-1$, a contradiction with the ampleness of $A_2$. 
        Consequently, the statement of [\textit{ibid}.] still holds.    
    \end{enumerate}
\end{rem}

Let us modify the proof of \cite[Lemma 4.39 (4)]{Lac}.

\begin{lem}[cf. {\cite[Lemma 18.3]{KM}}]\label{KM18.3}
Suppose that $A_1$ is smooth and $(A_{1, \wt S_1})^2=-1$.
Then $S_1$ is not Gorenstein.
\end{lem}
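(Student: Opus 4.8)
The plan is to argue by contradiction: assuming $S_1$ is Gorenstein, I would exhibit an explicit tiger of $(S_1, a_1A_1)$ living in $\wt S_0$, which contradicts the standing hypothesis that $S_0$ has no tiger in $\wt S_0$ via Lemma \ref{lem:Sarkisov}(8). The point is that the hypothesis $(A_{1,\wt S_1})^2=-1$ is exactly what forces the anti-canonical scaling of $A_1$ to produce a coefficient $\ge 1$.

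First I would translate the Gorenstein assumption into intersection data. Since $A_1$ is smooth and rational, its strict transform $\tilde A:=A_{1,\wt S_1}$ is a smooth rational curve with $\tilde A^2=-1$, so adjunction on the smooth surface $\wt S_1$ gives $K_{\wt S_1}\cdot\tilde A=-1$. A Gorenstein klt surface has only canonical (Du Val) singularities, so the minimal resolution $\sigma\colon\wt S_1\to S_1$ is crepant, $K_{\wt S_1}=\sigma^*K_{S_1}$, and hence $K_{S_1}\cdot A_1=\sigma^*K_{S_1}\cdot\tilde A=K_{\wt S_1}\cdot\tilde A=-1$, i.e. $-K_{S_1}\cdot A_1=1$. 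By Lemma \ref{lem:Sarkisov}(7) the surface $S_1$ is a log del Pezzo of rank one, so $-K_{S_1}$ is ample and $A_1\equiv\tfrac1d(-K_{S_1})$ with $d:=K_{S_1}^2$; as $K_{S_1}$ is now Cartier, $d$ is a positive integer, so $d\ge 1$ and $dA_1\equiv -K_{S_1}$.

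Next I would build the tiger. Set $\alpha:=(d-a_1)A_1$; since every Weil divisor on the klt surface $S_1$ is $\Q$-Cartier and $d\ge 1\ge a_1$, this is an effective $\Q$-Cartier divisor, and $K_{S_1}+a_1A_1+\alpha=K_{S_1}+dA_1\equiv 0$, which verifies condition (1) of Definition \ref{def:tiger}. Letting $\Gamma$ be the log pullback of $a_1A_1+\alpha=dA_1$ to $\wt S_0$, the divisor $E_1$ appears in $\Gamma$ with coefficient $d\ge 1$, so $E_1$ is a tiger of $(S_1,a_1A_1)$ in $\wt S_0$. Since $\wt S_0$ dominates $T_1$ and $S_0$ has no tiger in $\wt S_0$, Lemma \ref{lem:Sarkisov}(8) asserts that $(S_1,\Delta_1)=(S_1,a_1A_1)$ has no tiger in $\wt S_0$ either, a contradiction. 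Therefore $S_1$ is not Gorenstein.

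The only step that warrants care — and the one I would double-check rather than any numerical case analysis — is the bookkeeping of the coefficient of $E_1$ in $\Gamma$. Here one uses that $\pi_1$ contracts $\Sigma_1$, not $E_1$, so that $A_1=\pi_{1*}E_1$ and $E_1$ is the strict transform of $A_1$ under $\rho\colon\wt S_0\to S_1$; being non-$\rho$-exceptional, its coefficient in the log pullback of $dA_1$ equals the coefficient $d$ of $A_1$ itself. Thus the entire argument reduces to feeding the relation $dA_1\equiv -K_{S_1}$, which $\tilde A^2=-1$ forces, into the no-tiger hypothesis, with no need to enumerate Du Val configurations.
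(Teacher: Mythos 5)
Your proof is correct, but it takes a genuinely different route from the paper. The paper's proof (following \cite[Lemma 18.3]{KM}) is structural: it bounds the number of Du Val chains on $A_1$ by the kltness of $x_0$, derives $\frac{1}{s+1}+\frac{1}{t+1}+\frac{1}{u+1}+\frac{1}{v+1}<2$ from $(K_{S_1}+A_1)\cdot A_1>0$, matches against the classification of Du Val del Pezzo surfaces to pin down $S_1=S(4A_2)$ or $S(2A_1+2A_3)$, and then excludes these by analyzing pairs of sections on the extremal rational (quasi-)elliptic surface obtained by blowing up $\Bs|-K_{\wt S_1}|$. You instead exploit the rank-one numerics directly: crepancy of the minimal resolution plus adjunction give $-K_{S_1}\cdot A_1=1$, so $dA_1\equiv -K_{S_1}$ with $d=K_{S_1}^2\in\Z_{\geq 1}$, and scaling $A_1$ up to anti-canonical level produces a divisor of coefficient $d\geq 1$ in the log pullback, i.e.\ a tiger of $(S_1,a_1A_1)$; this contradicts the standing hypothesis of \S\ref{LacS4} via Lemma \ref{lem:Sarkisov}(8). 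Each step checks out against the paper's framework: klt surfaces are $\Q$-factorial, so $\alpha=(d-a_1)A_1$ is $\Q$-Cartier and effective (note $a_1<1$ by flushness, which you assert but could cite), $E_{1,\wt S_0}$ is non-exceptional over $S_1$ so its coefficient in the log pullback of $dA_1$ is exactly $d$, and Definition \ref{def:tiger} does count non-exceptional divisors of coefficient $\geq 1$. One point deserves emphasis, and you handle it correctly: since the hypothesis only excludes tigers \emph{in $\wt S_0$} (cf.\ the caution in Remark \ref{rem:Section4.3}(1), where a tiger produced by extra blow-ups does not yield a contradiction), it matters that your tiger is $E_1$ itself, which is by construction a divisor of the minimal resolution $\wt S_0$. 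What the two approaches buy: yours is shorter, characteristic-free, and avoids both the classification input \cite[Theorem 1.1]{KN2} and the lists of extremal rational (quasi-)elliptic surfaces; the paper's argument, while longer, produces explicit candidate configurations along the way, which is in the spirit of the rest of \S\ref{LacS4.6} where such structural data is reused.
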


\begin{proof}
Suppose that $S_1$ is Gorenstein.
Since $x_0$ is a klt singularity, $A_1$ contains at most four singularities.
Let $[2^s]$, $[2^t]$, $[2^u]$, $[2^v]$ be the singularities lying on $A_1$ with $s \geq t \geq u \geq v \geq 0$.
Since $(K_{S_1}+A_1 \cdot A_1)>0$, we have $\frac{1}{s+1}+\frac{1}{t+1}+\frac{1}{u+1}+\frac{1}{v+1}<2$.
Comparing with the classification of Du Val del Pezzo surfaces \cite[Theorem 1.1]{KN2}, we obtain that $(s,t,u,v)=(2,2,2,2)$ or $(3,3,1,1)$.
Hence $K_{S_1}^2=1$ and $S_1=S(4A_2)$ or $S(2A_1+2A_3)$.

On the other hand, let $Z$ be the blow-up of $\wt S_1$ at $\Bs|-K_{\wt S_1}|$, which is an extremal rational (quasi-)elliptic surface, and let $G \subset Z$ be the exceptional divisor over $\wt S_1$.
Then both $G$ and the strict transform of $A_1$ are sections such that for each reducible fiber of $Z$ they intersect with the same irreducible component.
However, we can check that there is no such choice of sections by \cite{M-P, Lang1, Lang2, Ito1, Ito2}, a contradiction.
\end{proof}

\begin{prop}[cf. {\cite[Lemma 4.39 (4)]{Lac}}]\label{prop:fence-nonDV}
Suppose that $A_1$ is smooth and $(S_2, A_2+B_2)$ is a fence.
Then $A_2$ contains at least one non-Du Val point unless $B_2$ is a cuspidal rational curve contained in $S_2^\circ$ and $\Char(k) = 2, 3$.
\end{prop}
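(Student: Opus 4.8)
The plan is to rerun Lacini's proof of \cite[Lemma 4.39 (4)]{Lac} verbatim up to the numerical step isolated in Remark \ref{rem:Section4.6} (4), and to correct only that step. So I would argue by contradiction, assuming that $A_2$ meets no non-Du Val point of $S_2$, and put $\alpha := -(K_{S_2}+A_2)\cdot A_2$. As $A_1$ is smooth, $A_2 \cong \PP^1_k$, and adjunction on $S_2$ reads $\alpha = 2 - \sum_p (1-\tfrac{1}{m_p})$, the sum over the points $p \in A_2 \cap \Sing(S_2)$, each an $[2^{m_p-1}]$-singularity; in particular $0 \le \alpha < 1$. Since $S_2$ has Picard rank one I may write $A_2 \equiv c(-K_{S_2})$, and the fence relation $(A_2 \cdot B_2)=1$ forces $B_2 \equiv c'(-K_{S_2})$ with $-K_{S_2}\cdot B_2 = u/d$ and $B_2^2 = 1/d$, where $u := -K_{S_2}\cdot A_2$ and $d := A_2^2$.

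Next I would locate $B_2$ and compute its genus. A smooth $B_2 \subset S_2^\circ$ would give, by adjunction, $(1-u)/d = -2$, hence $\alpha = u-d = 1+d > 1$, which is impossible; so $B_2$ is singular, and the transversality of the fence limits the singularity of $B_2 = \pi_2(E_2)$ to a single node or cusp. Using that $K_{S_2}+B_2$ is plt at the singular points of $S_2$ (the fence hypothesis), a short case analysis as in \cite{Lac} excludes $B_2$ meeting $\Sing(S_2)$, so $B_2 \subset S_2^\circ$. Then $(1-u)/d = 2g(B_2)-2 = 0$, so $u=1$, and the rank-one identity $c'(c'-1)K_{S_2}^2 = (K_{S_2}+B_2)\cdot B_2 = 0$ gives $c'=1$, i.e. $B_2 \equiv -K_{S_2}$. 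Consequently $\tfrac{1}{1-\alpha} = \tfrac1d = B_2^2 \in \Z_{>0}$, so $\alpha = 1 - \tfrac1n$ with $n := B_2^2$; this is precisely the conclusion the flawed step replaced by $\alpha = 0$.

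For $n=1$, i.e. $\alpha=0$, Lacini's original argument applies: there $A_2 \equiv -K_{S_2}$ as well, which on tracing back through $\pi_2$ yields $(A_{1,\wt S_1})^2 = -1$, and Lemma \ref{KM18.3} then shows $S_1$ is not Gorenstein, contradicting that $A_1$ meets only Du Val points. Hence $n \ge 2$. Feeding $\alpha = \tfrac{n-1}{n}$ back gives $\sum_p(1-\tfrac1{m_p}) = 1+\tfrac1n$, a constraint with only finitely many solutions in the $m_p \ge 2$; together with $A_2^2 = \tfrac1n$, $K_{S_2}^2 = n$ and the classification of Du Val del Pezzo surfaces \cite[Theorem B.6]{Lac} this pins down $S_2$, and $B_2 \equiv -K_{S_2}$ is a nodal or cuspidal anticanonical member contained in $S_2^\circ$.

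It remains to exclude the nodal case and to extract $\Char(k) \in \{2,3\}$ in the cuspidal case, and this is where I expect the real work to be. The plan is to pass to $\wt S_2$, where $B_{2,\wt S_2} \in |-K_{\wt S_2}|$ is a curve of arithmetic genus one and $A_{2,\wt S_2}$ is a smooth rational curve with $(-K_{\wt S_2})\cdot A_{2,\wt S_2}=1$; blowing up $\Bs|-K_{\wt S_2}|$ yields an extremal rational (quasi-)elliptic surface on which $B_2$ is a singular fiber and $A_2$ a section. A nodal $B_2$ produces an honest elliptic fibration, and the same comparison with the classification \cite{M-P, Lang1, Lang2, Ito1, Ito2} used in Lemma \ref{KM18.3} rules out the resulting fiber-plus-section configuration; a cuspidal $B_2$ instead forces the quasi-elliptic regime, which exists only in $\Char(k) \in \{2,3\}$, with the two characteristics realized in Lemma \ref{lem:Lac4.39-Char23} and Example \ref{eg:Lac4.39-char3}. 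The main obstacle is thus the separation of node from cusp and the pinning of the characteristic through this (quasi-)elliptic analysis --- the local analogue of the Riemann--Hurwitz input that forces $\Char(k) \neq 2,3$ in \cite[Lemma 18.8]{KM}; everything earlier is routine once the corrected identity $\alpha = 1-\tfrac1n$ replaces the erroneous $\alpha = 0$.
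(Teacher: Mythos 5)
Your numerical first half is essentially sound and is consistent with the paper's route: under the contradiction hypothesis one arrives at $B_2$ of arithmetic genus one contained in $S_2^\circ$, $B_2 \equiv -K_{S_2}$, and the corrected identity $\alpha = 1-\frac1n$ with $n = B_2^2 = K_{S_2}^2$ (the paper imports the corresponding facts (1)--(4) from Lacini's proof, including $(A_{2,\wt S_2})^2=-1$; note this makes Lemma \ref{KM18.3} available in \emph{all} cases, not only $n=1$, since it gives $(A_{1,\wt S_1})^2=-1$ and hence $S_1$ is never Gorenstein here --- so your disposal of $n=1$ by ``$S_1$ is not Gorenstein, contradicting that $A_1$ meets only Du Val points'' is a non sequitur: non-Gorensteinness of $S_1$ is perfectly compatible with $A_2$ lying in the Du Val locus of $S_2$, and the paper in fact \emph{uses} it as information rather than as a contradiction).

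The genuine gap is your endgame, which is not merely unfinished but would fail as planned. Both configurations you propose to kill by comparison with the classification of extremal rational (quasi-)elliptic surfaces actually \emph{exist} in every characteristic: by Lemmas \ref{lem:tiger-cusp} and \ref{lem:tiger-node} of this paper, for $\Char(k)\neq 2,3$ the surface $S(A_1+A_2)$ (type $[2]+[2^2]$, $n=6$) carries both a nodal and a cuspidal anticanonical member in its smooth locus, and by \cite[Lemma B.10]{Lac} it also carries the smooth rational curve $A_2$ of anticanonical degree one through the $[2]+[2^2]$-points with $K_{S_2}+A_2$ plt. In particular a cuspidal $B_2 \subset S_2^\circ$ corresponds to a type $II$ fiber of an honest \emph{elliptic} fibration and does not force the quasi-elliptic regime, so no restriction $\Char(k) \in \{2,3\}$ can fall out of the fibration tables. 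What actually excludes these cases is hunt-theoretic data that your plan never invokes. For the node: $b_2>e_1>\frac12$ (\cite[Lemma 4.33]{Lac}) rules out configuration I, forcing $\pi_2$ into configuration $(II,x^{m-1})$ with $b_2=\frac{m+1}{m+2}$ and $T_2$ Du Val; since $S_1$ is not Gorenstein by Lemma \ref{KM18.3}, $x_1=[j,2^m]$ with $j\geq 3$, whose spectral value $(j-2)(m+1)\geq m+1$ yields $e_1 \geq \frac{m+1}{m+2}=b_2$ by \cite[Lemmas 8.0.7(2) and 8.0.8]{KM}, a contradiction. For the cusp with $\Char(k)\neq 2,3$: Lemma B.10 pins down $S_2$ so that $B_2^2=6$, hence $\pi_2$ factors through the triple blow-up at the cusp and the exceptional $(-1)$-curve $C$ lies in $\wt S_0$; then $A_2$ (if $b_2\leq\frac56$) or $C$ (if $b_2\geq\frac56$) is a tiger of $(S_2,a_2A_2+b_2B_2)$, contradicting the standing no-tiger hypothesis --- the ingredient entirely absent from your sketch, and exactly what breaks down in $\Char(k)=2,3$, where Lemma B.10 is unavailable and other surfaces with $B_2^2\neq 6$ occur (Lemma \ref{lem:Lac4.39-Char23}, Example \ref{eg:Lac4.39-char3}). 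A smaller slip: blowing up $\Bs|-K_{\wt S_2}|$ produces the fibration directly only when $n=1$; for $n\geq 2$ one needs a morphism $Z \to \wt S_2$ from an extremal rational (quasi-)elliptic surface as in \cite[Lemma 3.2]{KN2}.
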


\begin{proof}
    As in the proof of {\cite[Lemma 4.39]{Lac}}, we obtain the following:
    \begin{enumerate}
        \item $x_1 \in A_1$.
        \item $B_2$ is singular and contains at most one singular point of $S_2$.
        \item $(A_{2, \wt S_2})^2=-1$.
        \item $A_2$ is in the Du Val locus if and only if $B_2$ has genus one and is in the smooth locus.
    \end{enumerate}
    Note that assertions (1)--(3) show that $(A_{1, \wt S_1})^2=(A_{2, \wt S_2})^2=-1$. 
    We may assume that $B_2$ has genus one and is in the smooth locus.
    In particular, $S_2$ is Du Val.
    
    Assume in addition that $B_2$ has a node.
    Then $\pi_2$ is not in configuration I since \cite[Lemma 4.33]{Lac} shows that $b_2>e_1>\frac12$. 
    Since $E_2$ contains at most one singularity, $\pi_2$ must be in configuration $(II, x^{m-1})$ for some $m \geq 1$ and $b_2=\frac{m+1}{m+2}$.
    In particular, $T_2$ is Du Val.
    On the other hand, $S_1$ is not Du Val by Lemma \ref{KM18.3}.
    Thus $x_1=[j,2^m]$ for some $j \geq 3$.
    By \cite[Lemma 8.0.8]{KM}, the spectral value of $(S_1, a_1A_1)$ at $x_1$ is $(j-2)(m+1) \geq m+1$.
    However, \cite[Lemma 8.0.7(2)]{KM} now shows that $\frac{m+1}{m+2}=b_2>e_1 \geq \frac{m+1}{m+2}$, a contradiction.
    
    Hence $B_2$ has a cusp. Suppose in addition that $\Char(k) \neq 2, 3$.
    Since $K_{S_2}+A_2$ is plt, $S_2$ is the Du Val del Pezzo surface of type $[2]+[2^2]$ by \cite[Lemma B.10]{Lac}.
    Since $B_2^2=6$, $\pi_2$ is factored by the blow-up $\pi'$ at the cusp of $B_2$ three times along $B_2$.
    In particular, the $\pi'$-exceptional $(-1)$-curve $C$ is contained in $\wt S_0$.
    If $b_2 \leq \frac56$ (resp.\ $\geq \frac56$), then $A_2$ (resp.\ $C$) is a tiger of $(S_2, a_2A_2+b_2B_2)$, a contradiction.
\end{proof}

We can determine when $A_2$ of a fence $(S_2, A_2+B_2)$ is contained in the Du Val locus as follows.

\begin{lem}\label{lem:Lac4.39-Char23}
Suppose that $A_1$ is smooth, $(S_2, A_2+B_2)$ is a fence, and $A_2$ is in the Du Val locus.
Then the following holds:
\begin{enumerate}
    \item[\textup{(1)}] $B_2$ is a cuspidal rational curve in $S_2^\circ$.
    \item[\textup{(2)}] $A_2$ passes through exactly two singularities.
    \item[\textup{(3)}] If $\Char(k)=3$, then $S_2 = S(3A_2)$ or $S(A_2+A_5)$.
    \item[\textup{(4)}] If $\Char(k)=2$, then 
    $S_2=S(2A_1+A_3)$ or $S(A_1+A_5)$.
    In the former case, $A_2$ passes through $[2]+[2^3]$-singularities.
\end{enumerate}
\end{lem}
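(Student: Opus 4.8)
The plan is to read off assertion (1) directly from Proposition~\ref{prop:fence-nonDV}: since $A_2$ lies in the Du Val locus, we are in the exceptional case of that proposition, so $B_2$ is a cuspidal rational curve contained in $S_2^\circ$ and $\Char(k)\in\{2,3\}$. Moreover, from the proof of Proposition~\ref{prop:fence-nonDV} I would retain that $x_1\in A_1$, that $A_2\cong\PP^1$ with $(A_{2,\wt S_2})^2=-1$, and that $S_2$ is a Du Val del Pezzo surface of rank one.

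Next I would extract a numerical identity that already forces at least two singularities on $A_2$. Since $B_2\subset S_2^\circ$ is cuspidal we have $p_a(B_2)=1$, so adjunction gives $(K_{S_2}+B_2)\cdot B_2=0$ and hence $B_2^2=-K_{S_2}\cdot B_2$; as $S_2$ has Picard rank one this yields $B_2\equiv -K_{S_2}$. The fence condition $A_2\cdot B_2=1$ then gives $-K_{S_2}\cdot A_2=1$, so $A_2\equiv \tfrac{1}{K_{S_2}^2}(-K_{S_2})$ and $A_2^2=\tfrac{1}{K_{S_2}^2}$. Applying adjunction to the smooth curve $A_2$ and computing the different at the $A_{k_x}$-singularities through which it passes, I obtain $\sum_{x\in\Sing(S_2)\cap A_2}\tfrac{k_x}{k_x+1}=1+\tfrac{1}{K_{S_2}^2}$. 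As the right-hand side lies in $(1,2]$ and every summand is $<1$, the curve $A_2$ meets at least two singular points.

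To promote this to \emph{exactly} two and to identify $S_2$, I would pass to a (quasi-)elliptic model exactly as in the proof of Lemma~\ref{KM18.3}. The pencil spanned by the cuspidal anticanonical curve $B_2$ has base locus of degree $K_{S_2}^2$; blowing these points up on $\wt S_2$ produces a rational surface $Z$ with $K_Z^2=0$ carrying a genus-one fibration $Z\to\PP^1$ in which $B_{2,Z}$ is a fibre and $A_{2,Z}$ a section. Because $S_2$ has rank one the surface $Z$ is extremal, and because $\Char(k)\in\{2,3\}$ and the fibre $B_2$ is cuspidal the fibration is quasi-elliptic. I would then invoke the classification of extremal rational quasi-elliptic surfaces in characteristic three \cite{Ito1} and two \cite{Ito2} to list the admissible reducible-fibre configurations.

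Finally, for each configuration I would recover $S_2$ by contracting the sections coming from the base points together with the $(-2)$-chains formed by the fibre components disjoint from the section, and track which singularities the section $A_2$ runs through. Combined with the numerical identity above, this both discards the spurious arithmetic solutions (such as $A_2$ meeting three or four $A_1$-points) and singles out the listed surfaces: $S(3A_2)$ or $S(A_2+A_5)$ when $\Char(k)=3$, and $S(2A_1+A_3)$ or $S(A_1+A_5)$ when $\Char(k)=2$, with $A_2$ passing through the $[2]+[2^3]$-singularities in the former characteristic-two case. The main obstacle is the bookkeeping in this last step: correctly matching the section's intersection pattern with the fibre components to the Du Val type in every entry of Ito's lists, which is precisely what rules out the extra arithmetic possibilities that the identity $\sum_{x}\tfrac{k_x}{k_x+1}=1+\tfrac{1}{K_{S_2}^2}$ alone cannot exclude.
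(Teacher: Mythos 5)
Your opening steps are sound and essentially match the paper: assertion (1) is indeed read directly off Proposition \ref{prop:fence-nonDV}, and your identity $\sum_x k_x/(k_x+1)=1+1/K_{S_2}^2$, derived via $B_2\equiv -K_{S_2}$ and adjunction with differents, is exactly equivalent to the paper's equation $\frac1n=2-\bigl(\frac{1}{s+1}+\frac{1}{t+1}+\frac{1}{u+1}\bigr)$ obtained from \cite[Lemma 4.21]{Lac}. Passing to an extremal rational quasi-elliptic model is also the right tool and is what the paper does (via \cite[Lemma 3.2]{KN2} and the lists of \cite{Ito1, Ito2}, choosing $Z$ to contain a fiber of type $II$).

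The genuine gap is in your final step. You claim that the section-versus-fiber bookkeeping in Ito's lists will "discard the spurious arithmetic solutions (such as $A_2$ meeting three or four $A_1$-points)." It cannot, because the critical configurations genuinely \emph{exist} as pairs $(S_2,A_2+B_2)$ and therefore survive any amount of fibration bookkeeping: in characteristic $3$ the solution $(n,s,t,u)=(1,2,2,2)$ is realized by $S_2=S(4A_2)$ (type (3) of \cite[Theorem 3.1]{Ito1}); in characteristic $2$ the solution $(2,1,1,1)$ is realized by $S(3A_1+D_4)$ and $S(7A_1)$; and in parts (3)--(4) the solution $(n,s,t)=(6,2,1)$ is realized by $S(A_1+A_2)$. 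The paper excludes these only by hunt-theoretic constraints that your proposal never invokes: for $S(4A_2)$, $B_2^2=1$ forces $E_2^2\le -3$, and since $A_1$ passes through $3[2^2]$-points plus $x_1$ of index $\ge 3$, the point $x_0$ would fail to be klt; for the characteristic-$2$ case, a lengthy argument bounding $E_2^2$, using $(K_{S_1}+A_1\cdot A_1)>0$, spectral values via \cite[Lemma 8.0.8]{KM}, and the sign of $(K_{S_0}\cdot\overline\Sigma_1)$ in the two subcases $q_1\neq x_1$ and $q_1=x_1$; and for $S(A_1+A_2)$, the existence of a tiger (blowing up the cusp of $B_2$ three times), contradicting Lemma \ref{lem:Sarkisov} (8). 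Relatedly, your adjunction argument only yields \emph{at least} two singularities on $A_2$; the paper's a priori bound $\sharp(A_2\cap\Sing S_2)=\sharp(A_1\cap\Sing S_1)-1\in\{2,3\}$, again coming from the hunt, is what caps the enumeration at three points --- without it, e.g.\ four $A_1$-points with $n=1$ is a further arithmetic solution you would have to kill. In short, the statement is not a fact about pairs $(S_2,A_2+B_2)$ alone but about those arising from the second hunt step of a tigerless rank-one surface, and the hunt data ($x_0$ klt, maximality of $e_0$, flushness, absence of tigers) is indispensable in precisely the cases your bookkeeping is supposed to handle.
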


\begin{proof}
The assertion (1) follows from Proposition \ref{prop:fence-nonDV}.
Furthermore, $S_2$ is Du Val.
Set $n=B_2^2=(-K_{S_2})^2 \in \Z_{>0}$.
Note that $\sharp(A_2 \cap \Sing(S_2))=\sharp(A_1 \cap \Sing(S_1))-1=2$ or $3$.
Take $s \geq t \geq u \geq 0$ such that $A_2$ passes through $[2^s]+[2^t]+[2^u]$-singularities.

By \cite[Lemma 4.21]{Lac}, we have 
\begin{align*}
    \frac1n&=\frac{1}{B_2^2}=\frac{1+(K_{S_2}+A_2 \cdot A_2)}{1+(K_{S_2}+B_2 \cdot B_2)}=1+(K_{S_2}+A_2 \cdot A_2)\\
    &=2-\left(\frac{1}{s+1}+\frac{1}{t+1}+\frac{1}{u+1}\right).
\end{align*}

\noindent (2):
It suffices to show that $u=0$.
Suppose by contradiction that $u >0$.
Then we can check that
\begin{align}\label{eq:sing-candidate}
    (n,s,t,u)=(1,2,2,2), (1,3,3,1), (1,5,2,1), \text{ or } (2,1,1,1).
\end{align} 
By \cite[Lemma 3.2]{KN2}, there is a birational morphism $Z \to \wt S_2$ from an extremal rational (quasi-)elliptic surface $Z$.
As in the proof of \cite[Lemma B.9]{Lac}, we can choose $Z$ so that it contains a singular fiber of type $II$.

Suppose in addition that $\Char(k) = 3$.
Then, by comparing with (\ref{eq:sing-candidate}), an analysis similar to \cite[\S3.3]{KN2} shows that $(n,s,t,u)=(1,2,2,2)$ and $Z$ is the quasi-elliptic surface of type (3) as in \cite[Theorem 3.1]{Ito1}.
Hence $S_2=S(4A_2)$.
Since $B_2^2=n=1$, we have $E_2^2 \leq -3$.
Since $A_1$ passes through exactly $3[2^2]$-singularities and $x_1$, whose index is at least three, $x_0$ cannot be a klt singularity, a contradiction.

Hence $\Char(k) =2$.
By comparing with (\ref{eq:sing-candidate}), an analysis similar to \cite[\S3.2]{KN2} shows that $(n,s,t,u)=(2,1,1,1)$ and we can choose $Z$ as a quasi-elliptic surface of type (e) or (f) as in \cite[Proposition 5.1]{Ito2}.
Hence $S_2=S(3A_1+D_4)$ or $S(7A_1)$.
In each case $A_2$ passes through exactly $3[2]$-singularities.
In particular, $E_2^2\leq n-4=-2$.
If $x_1$ is a $[2]$-singularity, then $A_1$ passes through exactly $4[2]$-singularities, a contradiction with $(K_{S_1}+A_1 \cdot A_1)>0$. 
Hence $E_2^2 \leq -3$.
If the equality holds, then $x_1=[3,2]$.

Suppose in addition that the center $q_1$ of $\pi_1$ is distinct from $x_1$.
Then $q_1$ is a $[2]$-singularity.
In particular, $x_0=[2+k; [2], [2], x_1]$ for some $k \geq 0$.
Furthermore, $\Sigma_{1, \wt S_0}$ intersects with the central curve of $x_0$ and the $(-2)$-curve (or the $(-3)$-curve when $k=0$) corresponding to the end of a $[2^k,3]$-chain.
Let $r$ and $s$ be the index and the spectral value of $x_1$.
Since either $E_2^2 \leq -4$ or $E_2^2=-3$ with $x_1=[3,2]$, we have $r \geq 4$ and $s \geq 2$ by \cite[Lemma 8.0.8]{KM}.
By \cite[Lemma 8.3.9]{KM}, we obtain $e_0=\frac{kr+s}{kr+s+1}$.
Since $(K_{S_0} \cdot \overline{\Sigma}_1)=-1+\frac{1}{2k+3}+e_0<0$, we have $kr+s+1<2k+3$.
Hence $0 \leq k<\frac{2-s}{r-2} \leq \frac{0}{r-2}$, a contradiction.

Hence $q_1=x_1$.
Set $j=-E_2^2$ and define a chain $x_1'$ so that $x_1=[j, x_1']$.
Then $x_0=[2+k; [2], [2], [2]]$ for some $k \geq 1$.
Furthermore, $\Sigma_{1, \wt S_0}$ intersects with the central curve of $x_0$ and the $(-2)$-curve, say $E_1'$, corresponding to the end of a $[2^k,j+1,x_1']$-chain.
Since $(K_{S_0} \cdot \overline{\Sigma}_1)=-1+e_0+e(E_1', S_0)<0$ and $e_0=\frac{2k}{2k+1}$, 
it holds that $e(E_1', S_0) <\frac{1}{2k+1}$.
By \cite[Lemma L.1]{KM}, we obtain 
\begin{align*}
e([2^k,j+1])=\frac{(k+1)(j-1)}{j(k+1)+1}&<e(E_2,S_0)\\&<(k+1) \cdot e(E_1',S_0)<\frac{k+1}{2k+1}.    
\end{align*}
Simplifying the equation yields $j < 2+\frac{2}{k}$.
Since $j=-E_2^2 \geq 3$, we obtain $k=1$, $j=3$, and $x_1=[3,2]$.
In particular, $e_0=e([3; [2], [2], [2]])=\frac23$.
Since $E_1'$ is the end of a $[2,4,2]$-chain, we have $e(E_1', S_0)=\frac13$.
Therefore $(K_{S_0} \cdot \overline{\Sigma}_1)=-1+e_0+e(E_1', S_0)=0$, a contradiction.

Combining these results, we obtain the assertion.

\noindent (3) and (4): By the assertion (2), we have $u=0$ and we can check that 
\begin{align}\label{eq:sing-candidate2}
  (n,s,t)=(2,3,3), (2,5,2), (3,2,2), (3,5,1), (4,3,1), \text{ or } (6,2,1).  
\end{align}
In the last case, we have $S_2 = S(A_1+A_2)$ and can show the existence of a tiger as in the proof of Proposition \ref{prop:fence-nonDV}, a contradiction. 
The rest of the proof runs as before.
Indeed, by comparing with (\ref{eq:sing-candidate2}), an analysis similar to \cite[\S\S 3.2--3.3]{KN2} determines $\Char(k)$ and $\Dyn(S_2)$.
In particular, the first case of (\ref{eq:sing-candidate2}) cannot occur.
\end{proof}

The following examples show that the assumption $\Char(k) \geq 5$ is essential in \cite[Lemma 4.39 (4)]{Lac} \textit{a priori}.
However, we do not know whether such a surface has a tiger in its minimal resolution or not.

\begin{eg}\label{eg:Lac4.39-char3}
In $\Char(k)=3$, let $S_2$ be the Du Val del Pezzo surface of type $3[2^2]$.
Since $S_2$ is constructed from a quasi-elliptic surface of type (2) as in \cite[Theorem 3.1]{Ito1} (see \cite[\S 3.3]{KN2} for more detail), there is a cuspidal anti-canonical member $B_2$ in $S_2^\circ$.
Let $A_2 \subset S_2$ be a $(-1)$-curve, which passes through two of the three $[2^2]$-singularities such that $K_{S_2}+A_2$ is plt.
Next blow-up at the cusp of $B_2$ three times along $B_2$.
Next blow-up at $A_2 \cap B_2$ twice along $A_2$.
Finally contract all $K$-non-negative curves.
Then we obtain a log del Pezzo surface $S$ of rank one with $\Dyn(S)=[2^2,3,2^2]+[2,4]+[3]+[2^2]+[2]$.
Indeed, the strict transform of the $(-1)$-curve over $A_2 \cap B_2$ in $S$ is of $(-K_S)$-degree $\frac{4}{35}$ and hence $-K_S$ is ample.

Suppose that $S_0=S$ has no tigers. 
Then $x_0$ is the $[2^2,3,2^2]$-singularity and $A_1$ is smooth. 
Since $\Dyn(S_1)=2[2^2]+2[3]+[2]$ and $A_1$ passes through exactly $2[2^2]+[3]$-singular points, the point $x_1$ must be the $[3]$-singular point on $A_1$.
Then $(S_2, A_2+B_2)$ is a fence and $A_2$ is contained in the Du Val locus of $S_2$. In particular, $\alpha=-(K_{S_2}+A_2 \cdot A_2)=\frac23>0$.
\end{eg}

The original proof of \cite[Lemma 18.7]{KM} seems to use \cite[Proposition 13.4]{KM} whose proof relies on the Bogomolov bound.
For the convenience of the reader, we prove Lemma \ref{lem:sigma2}, which corresponds to \cite[Lemma 18.7]{KM}.

\begin{lem}[{\cite[Lemma 18.7]{KM}}]\label{lem:sigma2}
Suppose that $\Char(k) \geq 5$ and $A_1$ is smooth.
Then $A_1 \neq \Sigma_2$.
\end{lem}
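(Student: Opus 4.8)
The plan is to argue by contradiction, so suppose $A_1 = \Sigma_2$; concretely this means that the curve $\Sigma_2$ generating the second extremal ray $R$ of $\overline{\mathrm{NE}}(T_2)$ is the strict transform $A_{1,T_2}$, and that $\pi_2$ contracts it. Since $S_1$ has Picard rank one, $A_1 \equiv \nu(-K_{S_1})$ with $\nu > 0$, so $A_1^2 > 0$; on the other hand contractibility of $A_{1,T_2}$ by the birational morphism $\pi_2$ forces $A_{1,T_2}^2 < 0$. As $f_1$ is an isomorphism away from its center $x_1$, it can lower this self-intersection only if $x_1 \in A_1$, so the first thing I would record is that necessarily $x_1 \in A_1$. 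Consequently $A_{1,T_2}$ meets $E_2$, the image $B_2 = \pi_2(E_2)$ passes through $q_2 = \pi_2(A_{1,T_2})$, and $\Delta_2 = \pi_{2*}\Gamma'_2$ is supported on the single curve $B_2$. By Lemma \ref{lem:Sarkisov} the pair $(S_2, b_2 B_2)$ is then a rank one log del Pezzo pair with $K_{S_2} + b_2 B_2$ anti-ample which, by part (8), has no tiger in any surface dominating $T_2$, and hence none inherited from $\wt S_0$.

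Next I would pin down $B_2$. Since $A_{1,T_2}$ is a smooth rational curve, the only singularity of the rational curve $B_2$ is at $q_2$, of multiplicity $(E_2 \cdot A_{1,T_2})$, which the flush condition of $(S_2, b_2 B_2)$ bounds to be two or three, exactly as \cite[Lemma 8.3.7]{KM} does in \S\ref{LacS4.2}. If $B_2$ were smooth and contained in $S_2^\circ$, then $B_2^2 > 0$ and, as in the proof of Proposition \ref{prop:fence-nonDV}, a suitable blow-up of $S_2$ would yield a tiger of $(S_2, b_2 B_2)$, a contradiction; the higher-genus and nodal possibilities are excluded by the same tiger constructions together with the maximality of $e_1$ and the flushness of $(S_1, a_1 A_1)$, the relevant coefficients being computed through \cite[Proposition 10.1]{KM} and \cite[Lemmas 8.3.9 and L.1]{KM} as in the surrounding lemmas. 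The surviving possibility is that $B_2$ is a cuspidal rational curve contained in $S_2^\circ$, so that (after verifying that $S_2$ is Du Val) $B_2$ is an irreducible singular anti-canonical member of $S_2$ in its smooth locus, which is precisely the configuration classified by \cite[Lemma B.9]{Lac}.

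The key step is then to invoke \cite[Lemma B.9]{Lac}: as in the proofs of \cite[Lemma B.9]{Lac} and of Lemma \ref{lem:Lac4.39-Char23}, the pair $(S_2, B_2)$ is realized by a birational morphism from an extremal rational (quasi-)elliptic surface $Z$ in which $B_2$ corresponds to a fibre of type $II$, and the classification of such $Z$ in \cite{M-P, Lang1, Lang2, Ito1, Ito2} both constrains $\Char(k)$ and pins $\Dyn(S_2)$ down to the short list of \cite[Lemma B.9]{Lac}. This is exactly where the hypothesis $\Char(k) \geq 5$ is used, replacing the Bogomolov bound that underlies \cite[Proposition 13.4]{KM}, and where \cite[Lemma B.9]{Lac}, valid only for $\Char(k) \neq 2, 3$, enters. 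Finally, for each surface on that list I would reverse the link $S_1 \xleftarrow{f_1} T_2 \xrightarrow{\pi_2} S_2$: the center $q_2$ and the numerical class of $B_2$ recover $\Sigma_2 = A_{1,T_2}$ and hence $A_1 \subset S_1$, the extraction $f_1$ with its center $x_1 \in A_1$, and therefore $\Dyn(S_0)$; in every case this contradicts either the maximality of $e_0$, the flushness of $(S_1, a_1 A_1)$, or the absence of a tiger of $S_0$ in $\wt S_0$. I expect the main obstacle to be the middle paragraph, namely proving cleanly that $B_2$ must be a cuspidal anti-canonical member of a Du Val $S_2$ so that \cite[Lemma B.9]{Lac} applies, since that is precisely the step where Keel and M\textsuperscript{c}Kernan invoked the Bogomolov bound and where the characteristic hypothesis is genuinely required.
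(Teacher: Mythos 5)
There is a genuine gap, and it sits exactly where you predicted: the middle paragraph. Your case analysis ends with ``the surviving possibility is that $B_2$ is a cuspidal rational curve contained in $S_2^\circ$,'' but in this situation that case dies instantly and is \emph{not} what survives. Here the boundary $\Delta_2$ is supported on $B_2$ alone (since $A_1=\Sigma_2$ is contracted), so if $B_2\subset S_2^\circ$ then the arithmetic-genus-one curve $B_2$ satisfies $(K_{S_2}+B_2)\cdot B_2=0$ by adjunction, hence $K_{S_2}+B_2\equiv 0$ on the rank one surface $S_2$, and $B_2$ is a tiger of $(S_2,b_2B_2)$, contradicting Lemma \ref{lem:Sarkisov}~(8). (This differs from Proposition \ref{prop:fence-nonDV}, where the boundary also contains $a_2A_2$ and the cuspidal-in-smooth-locus case genuinely survives in characteristic $2,3$.) The paper's proof therefore concludes the opposite of your claim: $B_2$ has a simple cusp at $q_2$ (with $\pi_2$ in configuration III, as in \cite[Lemma 18.7]{KM}) and passes through \emph{exactly one} singular point of $S_2$. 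Consequently your key step --- applying \cite[Lemma B.9]{Lac} directly to $(S_2,B_2)$ --- is unavailable: that lemma requires an anti-canonical member contained in the smooth locus of a Du Val surface, and $B_2$ meets $\Sing(S_2)$.

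What the paper does instead is pass to an auxiliary surface before invoking \cite[Lemma B.9]{Lac}: following \cite[Lemma-Definition 12.4]{KM} with $S=S_2$, it extracts the exceptional divisor $G$ adjacent to $B_2$ to get $f\colon Y\to S_2$ and contracts $M$ to get $\pi\colon Y\to W$, so that $B_{2,W}\subset W^\circ$ and $G_W$ has anti-canonical degree one; it is $W$, not $S_2$, that lands on the list of \cite[Lemma B.9]{Lac}. The conclusion then requires a case analysis absent from your sketch: $W=S(A_1+A_2)$ is excluded by a tiger dichotomy on whether $b\geq\frac56$ (making $A_1=\Sigma_2$ a tiger) or $b<\frac56$ (making $G_W$ one); the cases $S(E_8)$, $S(E_7)$, $S(E_6)$, $S(D_5)$, $S(A_4)$ reduce to $S(A_1+A_2)$ as in \cite[Lemma 4.12]{Lac}; and the remaining case $W=S(2A_4)$ is killed by a spectral value computation, not by ``reversing the link'' as your last paragraph suggests: there $B_2$ contains a $[2+r,2^4]$-singularity, so $x_1$ is a $[5,2+r,2^4]$-singularity of spectral value at least $18$, whence $b_2>\frac{18}{19}>\frac56$ by \cite[Lemmas 8.0.8 and 8.0.9]{KM} and $A_1=\Sigma_2$ is itself a tiger of $(S_2,b_2B_2)$. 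Your opening paragraph ($x_1\in A_1$, $\Delta_2$ supported on $B_2$, no tigers for $(S_2,b_2B_2)$) is sound, but without the misidentified surviving case corrected and the $W$-construction supplied, the proof does not go through.
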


\begin{proof}
Suppose by contradiction that $A_1=\Sigma_2$.
Then as in the proof of \cite[Lemma 18.7]{KM}, it holds that $B_2$ has a simple cusp at $q_2$ and $\pi_2$ is in configuration III.
If $B_2 \subset S_2^\circ$, then $B_2$ is a tiger of $(S_2, b_2B_2)$, a contradiction with Lemma \ref{lem:Sarkisov} (8).
Hence $B_2$ contains exactly one singularity.
Note that $b_2>e_1>\frac12$ by \cite[Lemma 4.33]{Lac}, and $-(K_{S_2}+\frac12B_2)$ is ample by Lemma \ref{lem:Sarkisov}.

Now we follow the notation of \cite[Lemma-Definition 12.4]{KM} with $S=S_2$.
Consider the morphism $f \colon Y \to S_2$ extracting the exceptional divisor $G$ adjacent to $B_2$ and the morphism $\pi \colon Y \to W$ contracting $M$.
Then, as in Case 2 of the proof of \cite[Lemma 4.12]{Lac}, it holds that $B_{2,W} \subset W^\circ$, $G_W$ is a smooth rational curve of anti-canonical degree one, and
$W$ is listed in \cite[Lemma B.9]{Lac} (see also Remark \ref{rem:Section4.3} (2)). 
By Lemma \ref{lem:Sarkisov}, there are $b>b_2$ and $c>0$ such that $(W, bB_{2,W}+cG_W)$ has no tigers in $\wt S_0$.

Suppose that $W = S(A_1+A_2)$.
If $b \geq \frac56$ (resp.\ $b <\frac56$), then $A_1=\Sigma_2$ (resp.\ $G_W$) is a tiger of $(W, bB_{2,W}+cG_W)$ in $\wt S_0$, a contradiction.

Suppose that $W=S(E_8)$, $S(E_7)$, $S(E_6)$, $S(D_5)$, or $S(A_4)$.
Then, as in the last part of the proof of \cite[Lemma 4.12]{Lac}, we can reduce to the case where $W = S(A_1+A_2)$, a contradiction.
Hence $W=S(2A_4)$.

An analysis similar to Remark \ref{rem:Section4.3} (4) gives that $B_2$ contains a $[2+r, 2^4]$-singularity for some $r \geq 0$.
Since $\pi_2$ is in configuration III, it holds that $x_1$ is a $[5,2+r, 2^4]$-singularity, whose spectral value is at least $18$ by \cite[Lemma 8.0.8]{KM}.
By \cite[Lemma 8.0.9]{KM}, we conclude that $b_2>\frac{18}{19}>\frac56$ and $A_1=\Sigma_2$ is a tiger of $(S_2, b_2B_2)$, a contradiction.
\end{proof}

\begin{rem} \label{rem:KM19.4.1.2}
        In \cite[19.4.1.2]{KM}, one possibility seems to be erroneously omitted.
        When $(3,2,2)$ is added and $z=[2]$, then $K_{S_0}$ is anti-ample if $k=3$.
        
        Indeed, the coefficient of the $(-3)$-curve in the $[3',2^{k-2},4,2^2]$-singularity equals $\frac{7k+2}{14k-1}$ and the coefficient of $z=[2]$ in $x_0$ equals $\frac{e_0}{2}=\frac{5k-10}{10k-18}$.
        Hence $(-K_{S_0} \cdot \overline{\Sigma}_0)=1-\frac{7k+2}{14k-1}-\frac{5k-10}{10k-18}=\frac{11(4-k)}{(14k-1)(10k-18)}$, which is positive if $k = 3$.
\end{rem}

\section{Plt pairs}\label{LacS5.1}
\cite[\S 5.1]{Lac} is devoted to dlt pairs $(S, C)$ such that $S$ is a log del Pezzo surface of rank one and $C$ is a reduced irreducible curve such that $K_S+C$ is anti-nef.
Taking into account the fact that the smoothness of $C$ is implicitly assumed in \cite[Lemma 5.4]{Lac}, we are only dealing with plt pairs among them.
These pairs appear in Theorem \ref{Lac6.2} (1)(c) and (2).
To classify such a pair $(S, C)$, we select a hunt based on the number of singularities on $C$.
For the convenience of the reader we repeat the scaling convention used in \cite[\S 5.1]{Lac}.

\begin{defn and notation}\label{DaN}
Let $(S, C)$ be a plt pair of a rank one log del Pezzo surface $S$ and a reduced irreducible curve $C$ such that $K_S+C$ is anti-nef.
    \begin{enumerate}
        \item[\textup{Case 1}:] Suppose that $C$ contains at least three singular points. Then we take $1 - \epsilon < a < 1$ for sufficiently small $\epsilon>0$ and take $f_0 \colon T_1 \to S$ and $\pi_1$ as a hunt step for $(S, aC)$.
        Set $b_1$ such that $K_{T_1}+aC_{T_1}+b_1E_1$ is $R$-trivial, where $R$ is the edge of $\overline{\mathrm{NE}}(T)$ defining $\pi_1$. 
        \item[\textup{Case 2}:] Suppose that $C$ contains at most two singular points. 
        Then we take $f_0 \colon T_1 \to S$ and $\pi_1$ as a hunt step for $(S, C)$.
        Set $b_1$ such that $K_{T_1}+C_{T_1}+b_1E_1$ is $R$-trivial, where $R$ is the edge of $\overline{\mathrm{NE}}(T)$ defining $\pi_1$. 
    \end{enumerate}
If $\pi_1 \colon T_1 \to S_1$ is birational, then we define $A_1$ as $E_{1, S_1}$ and $\Sigma_1$ as the exceptional curve of $\pi_1$.
If $C_{T_1} \neq \Sigma_1$ in addition, then we write $C_1 \coloneqq C_{S_1}$.
\end{defn and notation}

\begin{rem} 
        In Case 1, we can pick $E_1$ as in the proof of \cite[Proposition 23.5]{KM}. 
        Hence, when $\pi_1$ is birational, we can show the inequality $b_1 < a$ in much the same way as [\textit{ibid}.].
\end{rem}

For each scaled hunt, we can control the output as follows.

\begin{lem}\label{lem:dlthunt}
Let $(S, C)$ be a plt pair of a rank one log del Pezzo surface $S$ and a reduced irreducible curve $C$ such that $K_S+C$ is anti-nef.
Suppose that $C \cap \Sing S \neq \emptyset$.
Run the hunt as in Cases 1 or 2 of Definition and Notation \ref{DaN}.
Then one of the following holds.
\begin{enumerate}
    \item[\textup{(1)}] $T_1$ is a net.
    \item[\textup{(2)}] $C_{T_1}$ is contracted by $\pi_1$ (that is $C_{T_1} = \Sigma_1$).
    \item[\textup{(3)}] $C_{T_1}$ is not contracted by $\pi_1$ and $(S_1, C_1+A_1)$ is dlt at $\Sing S_1$.
\end{enumerate}
If (3) occurs, then the following holds.
\begin{enumerate}
    \item[\textup{(3-1)}] If $x_0 \not \in C$, then $(S_1, C_1+A_1)$ is a fence.
    \item[\textup{(3-2)}] If $x_0 \in C$, then $(S_1, C_1+A_1)$ is a fence or a banana.
\end{enumerate}
\end{lem}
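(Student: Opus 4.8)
The plan is to run the scaled pair through Lemma~\ref{lem:Sarkisov} and then analyse the birational output on the resolution. First I would record that $C$ is a smooth rational curve: plt-ness makes $C$ normal, and since $-(K_S+C)$ is nef, adjunction gives $(K_S+C)\cdot C=2g(C)-2+\deg(\mathrm{Diff})\le 0$; because $C$ meets $\Sing S$ the different is nonzero, forcing $g(C)=0$. Feeding the scaled pair (boundary $aC$ in Case~1, $C$ in Case~2) into Lemma~\ref{lem:Sarkisov}, part~(6) says $\pi_1$ is either a net, which is conclusion~(1), or birational. In the birational case $\pi_1$ contracts the unique irreducible curve $\Sigma_1$ generating $R$, so $C_{T_1}$ is contracted precisely when $C_{T_1}=\Sigma_1$, which is conclusion~(2). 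This leaves $C_{T_1}\ne\Sigma_1$, where $C_1=C_{S_1}$ is an honest curve and I must establish~(3) together with the dichotomy (3-1)/(3-2).

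For~(3) I would propagate log-terminality through the hunt. Writing the boundary uniformly as $aC$ (with $a<1$ in Case~1 and $a=1$ in Case~2), crepant pullback gives $K_{T_1}+aC_{T_1}+e_0E_1=f_0^*(K_S+aC)$ with $e_0=e(E_1;S,aC)<1$, so $(T_1,aC_{T_1}+e_0E_1)$ is plt. Examining $(K_{T_1}+aC_{T_1}+tE_1)\cdot\Sigma_1$ as $t$ grows from $e_0$, and using $E_1\cdot\Sigma_1>0$ together with $f_0^*(K_S+aC)\cdot\Sigma_1<0$ (pullback of the anti-ample $K_S+aC$, noting $\Sigma_1$ is not $f_0$-exceptional), shows the $R$-trivial value satisfies $b_1>e_0$, while $b_1<1$ by the Remark after Definition and Notation~\ref{DaN} in Case~1 and an analogous scaling estimate in Case~2. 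Since $K_{T_1}+aC_{T_1}+b_1E_1$ is $R$-trivial it equals $\pi_1^*(K_{S_1}+aC_1+b_1A_1)$, so $(S_1,aC_1+b_1A_1)$ is plt along $\Sing S_1$: away from $x_0$ and $q_1=\pi_1(\Sigma_1)$ the construction is an isomorphism, and at $q_1$ contracting the $K$-negative curve $\Sigma_1$ produces a klt point. Because $b_1<1$, raising the coefficient of $A_1$ to $1$ keeps the pair dlt at every singular point not lying on both $C_1$ and $A_1$; the remaining singular points are exactly the meeting points of $C_1$ and $A_1$, whose dlt-ness I fold into the last step.

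The dichotomy I would settle by counting $C_1\cap A_1$ on $T_1$, where $C_{T_1}$ and $E_1\cong\PP^1$ are both visible; note that $C_1\cdot A_1>0$ automatically since $S_1$ has Picard rank one, so the two curves always meet. If $x_0\notin C$ then $f_0$ is an isomorphism near $C$, so $C_{T_1}\cap E_1=\emptyset$ and the images can meet only at $q_1$; the local model of the extremal contraction $\pi_1$ from \cite[\S 3.4]{Lac} then shows that $C_1$ and $A_1$ meet at the single point $q_1$ transversally, with the pair dlt there, so $(S_1,C_1+A_1)$ is a fence and (3-1) holds. If $x_0\in C$ then $C_{T_1}$ additionally meets $E_1$ over $x_0$, and plt-ness of $(S,C)$ at $x_0$ forces this to be a single transversal point contributing one further meeting point on $S_1$; hence $C_1$ and $A_1$ meet in one or two points, giving a fence or a banana and (3-2). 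The main obstacle is precisely this last local analysis, namely excluding tangential or tacnodal meetings (a node of genus $\ge 2$) and checking dlt-ness at a meeting point that may itself be singular; I expect to dispatch it by combining the explicit fiber types of $\pi_1$ from \cite[\S 3.4]{Lac} (which bound $E_1\cdot\Sigma_1$ and $C_{T_1}\cdot\Sigma_1$) with the flushness of $K_{T_1}+aC_{T_1}+b_1E_1$ and the smoothness of $C$ and of $E_1$, so that the only intersections produced are nodal and transversal.
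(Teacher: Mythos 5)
Your preliminary reduction (smoothness of $C$, the net/birational/contracted trichotomy via Lemma \ref{lem:Sarkisov}) is fine, but there are two genuine gaps. The first is Case 2: you write the boundary uniformly as $aC$ and claim $b_1<1$ holds there ``by an analogous scaling estimate.'' No such estimate exists. The remark after Definition and Notation \ref{DaN} gives $b_1<a$ only in Case 1, and it works precisely because the coefficient was deliberately scaled down to $a=1-\epsilon$; in Case 2 the coefficient of $C$ is $1$ and nothing prevents $b_1\geq 1$. The paper treats $b_1\geq 1$ as a genuine subcase and needs a different mechanism for it: there $(T_1, C_{T_1}+E_1)$ is dlt, $R$-triviality gives $(K_{T_1}+C_{T_1}+E_1\cdot \Sigma_1)\leq 0$, hence $e(\Sigma_1; S_1, C_1+A_1)\leq 0$, and dlt-ness of $(S_1, C_1+A_1)$ descends by \cite[Theorem 2.44]{KM98}. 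Your argument, which hinges on $b_1<1$ to ``raise the coefficient of $A_1$ to $1$,'' collapses when $b_1\geq 1$ --- the pair $(S_1, aC_1+b_1A_1)$ is then not even a boundary pair.

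The second gap is the step you explicitly defer (``I expect to dispatch it''), which is the actual content of (3-1)/(3-2). Reading transversality off the local model of $\pi_1$ cannot work in general: $\Sigma_1$ may pass through singularities of $T_1$, so even with $C_{T_1}$ and $E_1$ smooth and meeting $\Sigma_1$ transversally, the images $C_1$ and $A_1$ can a priori be singular or tangent at $q_1$. The paper's mechanism is the $\epsilon\to 0$ limit that is the entire point of the Case 1 scaling: flushness of $(T_1, aC_{T_1}+b_1E_1)$ follows from $b_1<a$ as in \cite[8.4.1 (1)]{KM}, is pushed down by \cite[Lemma 8.3.1 (2)]{KM} (this, together with \cite[Lemma 8.0.4]{KM}, is also what gives assertion (3) at $\Sing S_1$, including the singular points where $C_1$ meets $A_1$ --- exactly the points you ``fold into the last step''); then \cite[Lemma 8.3.7 (1)]{KM} yields $a\cdot\mult_{q_1}C_1+b_1\cdot(\mult_{q_1}A_1-1)<1$ for every $a=1-\epsilon$, and only letting $a\to 1$ forces $\mult_{q_1}C_1=\mult_{q_1}A_1=1$; a node of genus $\geq 2$ at $q_1$ is excluded by \cite[Lemma 8.3.7 (2)]{KM}, which gives $2a+b_1<2$, absurd in the limit. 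Your outline never invokes the limit $a\to 1$ --- at any fixed $a<1$ the flush inequality does not force multiplicity one --- so the multiplicity and transversality statements, and with them the fence/banana dichotomy, remain unproven in your proposal.
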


\begin{proof}
In what follows, we may assume that $T_1$ is not a net and $C_{T_1} \neq \Sigma_1$.
Suppose that we run the hunt as in Case 1.

\noindent (3): Since $b_1<a<1$, we can show that $(T_1, aC_{T_1}+b_1E_1)$ is flush as in the proof of \cite[8.4.1 (1)]{KM}. 
Then $(S_1, aC_1+b_1A_1)$ is also flush by \cite[Lemma 8.3.1 (2)]{KM}.
Therefore the assertion follows from \cite[Lemma 8.0.4]{KM}.

\noindent (3-1): Suppose in addition that $x_0 \not \in C$. 
Then $C_1 \cap A_1$ consists of one point $q_1=\pi_1(\Sigma_1)$.
By the assertion (3), we have $q_1 \in S_1^\circ$.
Since $(S_1, aC_1+b_1A_1)$ is flush, \cite[Lemma 8.3.7 (1)]{KM} shows that 
\begin{align*}
a \cdot \mult_{q_1}C_1+b_1 \cdot \mult_{q_1}A_1-1<b_1.
\end{align*}
Taking $\epsilon \to 0$, we obtain $a \to 1$ and
\begin{align*}
     \mult_{q_1}C_1+(\lim_{\epsilon \to 0} b_1) \cdot (\mult_{q_1}A_1-1) \leq 1.
\end{align*}
In particular, $\mult_{q_1}C_1=\mult_{q_1}A_1=1$.
If $q_1 \in A_1 \cap C_1$ is a node of genus $\geq 2$, then \cite[Lemma 8.3.7 (2)]{KM} shows that 
$2a+b_1<2$ and hence $2+(\lim_{\epsilon \to 0}b_1) \leq 2$,
a contradiction.
Hence $(S_1, C_1+A_1)$ is a fence.

\noindent (3-2): 
Suppose in addition that $x_0 \in C$.
If $q_1=\pi_1(\Sigma_1)$ is not contained in $C_1$, then $(S_1, C_1+A_1)$ is a fence.
If $q_1 \in C_1$, then as in the proof of the assertion (3-1), we obtain that $(S_1, C_1+A_1)$ is a banana.

Now suppose that we run the hunt as in Case 2.
If $b_1<1$, then we can show the flushness of $(S_1, C_1+b_1A_1)$ and the assertions (3), (3-1), (3-2) as in Case 1.

Now suppose in addition that $b_1 \geq 1$.
Then $(T_1, C_{T_1}+E_1)$ is dlt.
Since $(K_{T_1}+C_{T_1}+E_1 \cdot \Sigma_1) \leq 0$, we have $e(\Sigma_1, K_{S_1}+C_1+A_1) \leq 0$.
By \cite[Theorem 2.44]{KM98}, we conclude that $(S_1, C_1+A_1)$ is dlt, which proves the assertion (3).
Since $(S_1, C_1+A_1)$ is dlt, the assertions (3-1) and (3-2) hold.
\end{proof}

Based on Lemma \ref{lem:dlthunt}, we classify $(S, C)$ in \cite[Propositions 5.1-5.3]{Lac} and \cite[Lemma 5.4]{Lac}.
When $(S_1, C_1+A_1)$ is a banana, we mainly use \cite[Lemma 4.41]{Lac}.
If each of $C_1$ and $A_1$ contains a unique singular point in addition, then we also use \cite[Proposition 13.2]{KM}.
On the other hand, when $(S_1, C_1+A_1)$ is a fence, we use the results in \cite[\S 4.5]{Lac}.
We also use arguments as in the proof of \cite[Lemma 4.39 (3)]{Lac} instead of \cite[Proposition 13.3]{KM} since the latter relies on deformation theory and the Bogomolov bound.

The assumption $\Char(k) \neq 2,3$ is used only to obtain \cite[Proposition 5.1]{Lac}.
We use \cite[Lemma B.9]{Lac} to obtain the descriptions (2) and (5) of \cite[Proposition 5.1]{Lac}.
We also apply the Riemann-Hurwitz formula in the case where $T_1$ is a net and $E_1$ is a 2- or 3-section.  

\begin{rem}
Let us mention some remarks on the proofs in \cite[\S 5.1]{Lac}.
    \begin{enumerate}
        
        \item The list $S(A_1+A_2+A_5)$, $S(2A_1+A_3)$, $S(4A_1)$ in both \cite[Proposition 23.5 (4)]{KM} and \cite[Proposition 5.1]{Lac} should be replaced by the list $S(A_1+A_2+A_5)$, $S(2A_1+A_3)$, $S(4A_2)$.    

        \item In Case 1a of the proofs of \cite[Propositions 5.2 and 5.3]{Lac}, we can deny the case where $E_1$ is a bisection as follows.

        Assume that $E_1$ is a bisection.
        Then $b_1=\frac12$.
        Let $y$ be a singularity on $C$ and $r$ the index of $y$.
        Since $\frac12=b_1 \geq e(y; K_S+C)=\frac{r-1}{r}$, we obtain $r = 2$.
        Hence $y$ is a $[2]$-singularity.
        Since there are at most $2[2]$-singular points on $C$, we have $C_{\wt S}^2 \geq 0$.
        
        Let $\wt S \to T \to \PP^1_k$ be the induced morphism.
        Take $\wt S_1 \to H$ to be the MMP/$\PP^1_k$ isomorphic along $C$.
        Then $H$ is a Hirzebruch surface, $C_H$ is a section with $C_{H}^2=C_{\wt S_1}^2 \geq 0$, and $E_{1,H}$ is a bisection disjoint from $C_H$.
        However, there is no such pair of a section and a 2-section in any Hirzebruch surface, a contradiction.

        \item In Case 2a of the proof of \cite[Proposition 5.2]{Lac}, we can deny the case where $E_1$ is a bisection as follows.

        As above, we can check that there are $2[2]$-singular points on $C$ and $x_0$ is one of them.
        In particular, $E_1 \subset T_1^\circ$.
        Since $C_{T_1}$ is a section, $T_1$ contains a unique multiple fiber, say $F$, which passes through the $[2]$-singular point on $C_{T_1}$.
        Since $E_1$ is contained in the smooth locus, it intersects with $F$ at one point transversely, and $F$ is of multiplicity two.
        In particular, $F$ must be of type (5) with $k=1$ of \cite[Lemma 11.5.9]{KM}.

        Now take $S_1 \to H$ as above.
        Then we can check that $H$ is a Hirzebruch surface and $E_{1,H}$ is a bisection with $E_{1,H}^2=0$, a contradiction.
        
        \item Since the proof of \cite[Proposition 5.3]{Lac} is independent of \cite[Proposition 5.2]{Lac}, we can classify $(S, C)$ as in case (7) of [\textit{ibid}.].
        
        \item In \cite[Propositions 5.2 (5) and (7), and 5.3]{Lac}, the phrase ``$K$-positive'' should be replaced by ``$K$-nonnegative'' since we have to take $(-2)$-curves into account.
        
        \item In \cite[Proposition 5.3 (7)]{Lac}, $F \setminus (C \cup E)$ should be replaced by $F \setminus C$.
        Indeed the point can be $F \cap E$ as follows.

        Start with $\FF_2$.
        Pick the negative section $E$, a fiber $F$, and a tautological section $C$.
        Blow up at $F \cap E$ twice along $F$ and contract all the $K$-non-negative curves.
        Then we obtain a log del Pezzo surface with $\Dyn(S)=[2,3]+[2]$ such that $C_S \cap \Sing S$ is a $[2]$-singularity.
        Since $e([2,3])<\frac12$, the point $x_0$ must be the $[2]$-singularity on $C_S$ in the hunt step.
        
        \item In \cite[Lemma 5.4]{Lac}, we need to add the assumption that $C$ is a smooth rational curve.
    \end{enumerate}
\end{rem}

\section{Du Val del Pezzo surface}\label{DuVal}
In this section, we consider pairs $(S, C)$ such that $S$ is a Du Val del Pezzo surface of rank one and $C$ is an irreducible, reduced, and singular anti-canonical member of $S$ contained in $S^\circ$.
These pairs appear in Theorem \ref{Lac6.2} (1)(b).
When $C$ has a simple cusp, we obtain the following.

\begin{lem}[{\cite[Lemma B.9]{Lac}}]\label{lem:tiger-cusp}
    Let $\Char(k) \neq 2,3$, $S$ be a Du Val del Pezzo surface of rank one, and $C \in |-K_S|$ such that $C$ has a simple cusp and $C \subset S^\circ$.
    Then one of the following holds.
    \begin{enumerate}
        \item[\textup{(1)}] $S \cong \PP^2_k$ and $C$ is a cuspidal cubic.
        \item[\textup{(2)}] $S \cong S(A_1)$ and $C$ is a cuspidal quartic in $S(A_1) \subset \PP^3_k$.
        \item[\textup{(3)}] $S$ is obtained by blowing down $(-1)$-curves on an extremal rational elliptic surface $Z$ and $C_Z$ is a singular fiber of type $II$.
    \end{enumerate}
    In Case (3), one of the following holds.
    \begin{enumerate}
        \item[\textup{(3-1)}] The singular fibers of $Z$ are $II$, $II^*$ and $S$ is one of $S(E_8)$, $S(E_7)$, $S(E_6)$, $S(D_5)$, $S(A_4)$, or $S(A_1+A_2)$.
        \item[\textup{(3-2)}] $\Char(k)=5$, the singular fibers of $Z$ are $I_5$, $I_5$, $II$, and $S=S(2A_4)$.
    \end{enumerate}
\end{lem}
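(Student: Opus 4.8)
The plan is to pass to the minimal resolution, translate the hypotheses into the language of rational elliptic surfaces, and then read off the answer from the classification of extremal ones that admit a cuspidal fiber.

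First I would set $\sigma \colon \wt S \to S$ to be the minimal resolution. Since $S$ is Du Val, $\sigma$ is crepant, so $\wt S$ is a weak del Pezzo surface of degree $d \coloneqq K_S^2$ with $\rho(\wt S) = 10-d$; the rank one hypothesis forces all of the $9-d$ independent $(-2)$-curves to be $\sigma$-exceptional. Because $C \subset S^\circ$, its total transform $C_{\wt S} = \sigma^* C$ is a cuspidal member of $|-K_{\wt S}|$ disjoint from every $(-2)$-curve. I would then dispose of the two largest degrees directly: if $d=9$ then $\wt S = S = \PP^2_k$ and $C$ is a cuspidal cubic, which is case (1); if $d=8$ then rank one forces $\wt S = \FF_2$, hence $S \cong S(A_1)$ with $C$ a cuspidal quartic, which is case (2). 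These degrees are genuinely exceptional, since the extremal surfaces appearing below contract only to degree $\le 6$, so neither $\PP^2_k$ nor $S(A_1)$ can arise in case (3).

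For $d \le 7$ I would invoke \cite[Lemma 3.2]{KN2} to obtain a birational morphism $\mu \colon Z \to \wt S$ from an extremal rational (quasi-)elliptic surface $Z$. Since $\Char(k) \neq 2,3$, no quasi-elliptic fibration exists, so $Z \to \PP^1_k$ is genuinely elliptic; extremality (Mordell--Weil rank zero) is the translation of $\rho(S)=1$. Every $(-2)$-curve of $\wt S$ has zero intersection with the fiber class $-K_{\wt S}$, hence lies in a fiber, so the reducible fibers of $Z$ carry the Dynkin data of $S$. The decisive point, established exactly as in the construction of the fibration, is that the cuspidal member can be arranged to be a fiber: choosing the elliptic pencil through $C_{\wt S}$ and resolving its base points away from the cusp leaves the strict transform $C_Z$ an irreducible rational curve with a cusp, that is, a fiber of type $II$.

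It then remains to list the extremal rational elliptic surfaces admitting a fiber of type $II$. Using the classification \cite{M-P, Lang1, Lang2, Ito1, Ito2} I would verify that the only configurations containing a type $II$ fiber are $(II^*, II)$, valid in every characteristic $\neq 2,3$, and $(I_5, I_5, II)$, valid only when $\Char(k)=5$. Reading off $S$ as the contraction of $(-1)$-curves from $Z$ (equivalently, contracting the exceptional locus of $\mu$ and then the $(-2)$-curves), the degree $d$ pins down $\Dyn(S)$: the $(II^*, II)$ surface yields $E_8, E_7, E_6, D_5, A_4, A_1+A_2$ as $d$ runs through $1, \ldots, 6$, giving (3-1), while the characteristic $5$ surface $(I_5, I_5, II)$ has $d=1$ and yields $2A_4$, giving (3-2). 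The main obstacle is this classification step: one must command the full list of extremal rational elliptic surfaces in the relevant characteristics and isolate those with a type $II$ fiber, recognising in particular that characteristic $5$ produces the extra configuration $(I_5, I_5, II)$ — precisely where the positive-characteristic input \cite{Lang1, Lang2, Ito1, Ito2} is indispensable. A secondary subtlety is to justify that the cuspidal member is always realised as a fiber of the \emph{extremal} fibration furnished by \cite[Lemma 3.2]{KN2}, and to carry out the bookkeeping matching each degree with the correct Dynkin type.
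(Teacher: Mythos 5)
Your proposal is correct and takes essentially the same route as the paper: the paper establishes this lemma by invoking Lacini's Lemma B.9 — whose method is precisely your reduction, via the minimal resolution and \cite[Lemma 3.2]{KN2}, to an extremal rational elliptic surface containing a type $II$ fiber, followed by the classification \cite{M-P, Lang1, Lang2, Ito1, Ito2} — and then adds exactly the degree $9$ and $8$ cases (1)--(2) that were erroneously omitted from Lacini's statement, just as you do. The one point to phrase carefully is that $\rho(S)=1$ alone is not the same as extremality when $K_S^2\geq 2$ (the exceptional curves over the base points can enlarge the Mordell--Weil rank unless the pencil is chosen with suitably infinitely near base points along $C$), which is the compatibility issue you rightly flag and which the cited construction resolves.
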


Note that cases (1) and (2) are erroneously omitted in \cite[Lemma B.9]{Lac}.
The same argument applies even if $C$ has a simple node.

\begin{lem}\label{lem:tiger-node}
    Let $\Char(k) \neq 2,3$, $S$ be a Du Val del Pezzo surface of rank one, and $C \in |-K_S|$ such that $C$ has a simple node and $C \subset S^\circ$.
    Then one of the following holds.
    \begin{enumerate}
        \item[\textup{(1)}] $S \cong \PP^2_k$ and $C$ is a nodal cubic.
        \item[\textup{(2)}] $S \cong S(A_1)$ and $C$ is a nodal quartic in $S(A_1) \subset \PP^3_k$.
        \item[\textup{(3)}] $S$ is obtained by blowing down $(-1)$-curves on an extremal rational elliptic surface $Z$ and $C_Z$ is a singular fiber of type $I_1$.
    \end{enumerate}
    In Case (3), one of the following holds.
    \begin{enumerate}
        \item[\textup{(3-1)}] $K_S^2=1$ and $S$ is one of the following: $S(E_8)$, $S(E_7+A_1)$, $S(E_6+A_2)$, $S(D_8)$, $S(2A_4)$, $S(D_5+A_3)$, $S(A_8)$, $S(A_7+A_1)$, or $S(A_5+A_2+A_1)$. 
        \item[\textup{(3-2)}] $K_S^2=2$ and $S$ is one of the following: $S(E_7)$, $S(A_1+D_6)$, $S(A_1+2A_3)$, or $S(A_2+A_5)$.
        \item[\textup{(3-3)}] $K_S^2=3$ and $S$ is one of the following: $S(E_6)$, $S(A_1+A_5)$, or $S(3A_2)$.
        \item[\textup{(3-4)}] $K_S^2=4$ and $S$ is one of the following: $S(D_5)$ or $S(2A_1+A_3)$.
        \item[\textup{(3-5)}] $K_S^2 \geq 5$ and $S$ is one of the following: $S(A_4)$ or $S(A_1+A_2)$.
    \end{enumerate}
    Moreover, we have $S \neq S(2A_4)$ when $\Char(k) =5$.
\end{lem}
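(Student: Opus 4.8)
The plan is to follow the proof of Lemma \ref{lem:tiger-cusp} almost verbatim, with the cuspidal fiber of type $II$ systematically replaced by a nodal fiber of type $I_1$; in particular the associated genus-one fibration will be genuinely elliptic rather than quasi-elliptic. First I would pass to the minimal resolution $\sigma \colon \wt S \to S$. Since $S$ is Du Val and $C \subset S^\circ$, the curve $C$ meets none of the $\sigma$-exceptional curves, so its strict transform $C_{\wt S}$ coincides with the total transform and lies in $|-K_{\wt S}|$ as an irreducible rational curve of arithmetic genus one with a single node. The two top-degree cases are disposed of directly: the only rank one Du Val del Pezzo surfaces with $K_S^2 = 9$ and $K_S^2 = 8$ are $\PP^2_k$ and the quadric cone $S(A_1) \subset \PP^3_k$, and a singular anti-canonical member avoiding the vertex is then a nodal cubic, respectively a nodal quartic, which gives cases (1) and (2).

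For $K_S^2 \le 7$ I would build the elliptic surface. Applying \cite[Lemma 3.2]{KN2} (equivalently, blowing up the base points of a pencil in $|-K_{\wt S}|$ through $C_{\wt S}$) yields a birational morphism $Z \to \wt S$ from a rational (quasi-)elliptic surface with $K_Z^2 = 0$ such that $C_Z$ is a fiber of the fibration $Z \to \PP^1_k$ defined by $|-K_Z|$. Because $C$ has a node, $C_Z$ is a fiber of type $I_1$ and the fibration is elliptic. The hypothesis that $S$ has rank one forces the Mordell--Weil rank of $Z$ to vanish, so $Z$ is extremal, and $S$ is recovered from $Z$ by contracting the appropriate $(-1)$-curves. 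This places $S$ in case (3).

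What remains is the enumeration. I would run through the classification of extremal rational elliptic surfaces in the given characteristic \cite{M-P, Lang1, Lang2, Ito1, Ito2}, retain exactly those admitting an $I_1$ fiber, and for each read off $\Dyn(S)$ from the reducible fibers and $K_S^2$ from the number of contracted $(-1)$-curves via $K_S^2 = 10 - \rho(\wt S)$. Sorting the outcomes by degree produces the lists (3-1)--(3-5). As in Lemma \ref{lem:tiger-cusp}, the hypothesis $\Char(k) \neq 2,3$ is used to guarantee the uniqueness of each surface and the correctness of the resulting list, via \cite[Lemma B.10]{Lac} and \cite[Proposition 13.5]{KM}. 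For the final assertion, I would observe that in $\Char(k) = 5$ the extremal configuration with two $I_5$ fibers has its remaining fibers collapse to a single additive fiber of type $II$ rather than a pair of $I_1$'s --- this is precisely the configuration recorded in case (3-2) of Lemma \ref{lem:tiger-cusp} --- so the surface $S(2A_4)$ possesses no nodal anti-canonical member supported in its smooth locus, whence $S \neq S(2A_4)$.

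The main obstacle is the enumeration in the third paragraph: matching, characteristic by characteristic, each extremal rational elliptic surface carrying an $I_1$ fiber to its rank one Du Val del Pezzo contraction, reading off the Dynkin type, and keeping the degree bookkeeping straight. The analysis is routine but long, and the characteristic-specific degenerations --- above all the char-$5$ behavior of the $I_5 + I_5$ configuration that underlies the final clause --- are what require care.
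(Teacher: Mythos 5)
Your proposal is correct and follows essentially the same route as the paper, whose proof simply states that the assertions follow by the method of Lemma \ref{lem:tiger-cusp} (Lacini's Lemma B.9) --- minimal resolution, direct treatment of degrees $9$ and $8$, passage to an extremal rational elliptic surface with an $I_1$ fiber, and enumeration via the classification of extremal rational elliptic surfaces --- and derives the final clause from the nonexistence in $\Char(k)=5$ of an extremal configuration $I_5, I_5, I_1, I_1$ (it degenerates to $I_5, I_5, II$), exactly as you argue. The only slight misattribution is your appeal to \cite[Lemma B.10]{Lac} and \cite[Proposition 13.5]{KM} for the role of $\Char(k)\neq 2,3$; in this lemma the assumption is needed because in characteristics $2$ and $3$ quasi-elliptic surfaces exist and the classification of singular fibers of extremal rational elliptic surfaces differs substantially, but this does not affect the validity of your argument.
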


\begin{proof}
    The assertions follow by the same method as in the proof of \cite[Lemma B.9]{Lac}.
    Note that the last assertion follows from the absence of extremal rational elliptic surfaces with singular fibers $I_5$, $I_5$, $I_1$, $I_1$ in $\Char(k)=5$ \cite[Theorem 4.1]{Lac}.
\end{proof}

The assumption $\Char(k) \neq 2,3$ is essential for Lemmas \ref{lem:tiger-cusp} and \ref{lem:tiger-node}.
This is because, when $\Char(k) = 2,3$, there are rational quasi-elliptic surfaces over $k$, and the classification of the singular fibers of extremal rational elliptic surfaces over $k$ is significantly different from that over $\C$ (see \cite{Lang1, Lang2, Ito1, Ito2} for details).

\section*{Acknowledgements}
The author would like to express his sincere gratitude to Takashi Kishimoto and the other editors of the proceedings of the conference ``Varieties with Boundaries'' for giving him the opportunity to contribute this paper, despite not having participated in the conference.
The author is supported by JSPS KAKENHI Grant Number JP21K13768.

\bibliography{hoge.bib}
\bibliographystyle{alpha}

\end{document}